\documentclass[a4paper,reqno]{amsart}
\usepackage[margin=1.5in]{geometry}
\usepackage[english]{babel}
\usepackage[T1]{fontenc}
\usepackage[utf8]{inputenc}
\usepackage{microtype}
\usepackage{stmaryrd}
\usepackage[dvipsnames]{xcolor}
\usepackage{subfiles}

\usepackage{eucal}
\usepackage[shortlabels]{enumitem}
\usepackage[super]{nth}

\usepackage{amsmath}
\usepackage{amssymb}

\usepackage{amsthm}
\usepackage{amsfonts}
\usepackage{graphicx}

\usepackage{dsfont}
\usepackage{mathtools}
\usepackage{xparse}
\usepackage{tikz-cd}
\usepackage{quiver}
\usepackage{nccmath}
\usepackage{tikz}
\usepackage{tabularx}
\newcolumntype{Y}{>{\centering\arraybackslash}X}
\newcommand{\nwidth}{4cm}
\tikzstyle{no} = [rectangle, rounded corners, 
minimum width=\nwidth, 
minimum height=1cm,
text width = \nwidth,
align=center,
draw=black]

\tikzstyle{arrowv} = [implies-,double equal sign distance,shorten >=0.15cm,shorten <=0.15cm]
\tikzstyle{arrow} = [implies-,double equal sign distance,shorten >=0.05cm,shorten <=0.05cm]

\usepackage[pdfborder={0 0 0}]{hyperref}
\hypersetup{
	colorlinks=true,
	allcolors=.,
	bookmarksdepth = 3
}
\usepackage[capitalize,nameinlink,nosort]{cleveref}
\crefformat{equation}{#2(#1)#3}
\Crefformat{equation}{#2(#1)#3}
\crefname{enumi}{}{}
\crefname{construction}{Construction}{Constructions}
\crefname{theoremA}{Theorem}{Theorems}

\usepackage{aliascnt}
\let\oldtheorem\newtheorem
\RenewDocumentCommand{\newtheorem}{s m o m O{}}{%
\IfBooleanTF{#1}%
{\oldtheorem{#2}{#4}}%
{\IfNoValueTF{#3}{\oldtheorem{#2}{#4}[#5]}%
{\newaliascnt{#2}{#3}%
\oldtheorem{#2}[#2]{#4}%
\aliascntresetthe{#2}}}}

\theoremstyle{plain}
\newtheorem{theorem}{Theorem}[section]
\newtheorem{lemma}[theorem]{Lemma}
\newtheorem{proposition}[theorem]{Proposition}
\newtheorem{corollary}[theorem]{Corollary}

\newtheorem*{theorem*}{Theorem}
\newtheorem{theoremA}{Theorem}

\theoremstyle{definition}
\newtheorem{definition}[theorem]{Definition}
\newtheorem{remark}[theorem]{Remark}
\newtheorem{example}[theorem]{Example}
\newtheorem{notation}[theorem]{Notation}
\newtheorem*{notation*}{Notation}
\newtheorem{construction}[theorem]{Construction}

\makeatletter
\usepackage[normalem]{ulem}
\usepackage{contour}

\contourlength{0.5pt}

\makeatletter
\DeclareRobustCommand{\myuline}[2][0pt]{%
  \ifmmode
    \uline{\hphantom{#2}\kern-#1}%
    \kern#1%
    \mathllap{\mathpalette\my@cont@{#2}}%
  \else
    \uline{\phantom{#2}\kern-#1}%
    \kern#1%
    \llap{\contour{background}{#2}}%
  \fi
}
\newcommand{\my@cont@}[2]{\contour{background}{\mbox{$\m@th#1#2$}}}
\makeatother

\DeclareMathOperator{\End}{End}

\newcommand{\id}{\mathrm{id}}
\DeclareMathOperator*{\colim}{colim}

\DeclareMathOperator{\ev}{ev}

\DeclareMathOperator{\LMod}{LMod}

\DeclareMathOperator{\Mod}{Mod}
\DeclareMathOperator{\Alg}{Alg}
\DeclareMathOperator{\coAlg}{coAlg}

\DeclareMathOperator{\Map}{Map}
\newcommand{\Sym}{M}

\DeclareMathOperator{\Ar}{Ar}

\DeclareMathOperator{\CMon}{CMon}
\newcommand{\Monad}{\mathrm{Monad}}
\newcommand{\whisk}{\cdot}

\newlist{numberenum}{enumerate}{1}
\setlist[numberenum]{\upshape(\arabic*)}

\newcommand{\Fun}{\mathrm{Fun}}
\newcommand{\twomap}{\myuline[1pt]{\mathrm{Map}}}

\newcommand{\activ}{\mathrm{act}}

\newcommand{\Fin}{\mathcal{F}\mathrm{in}}

\newcommand{\presl}{\mathcal{P}\mathrm{r}^{\mathrm{L}}}
\newcommand{\preslst}{\mathcal{P}\mathrm{r}^{\mathrm{L}}_{\mathrm{St}}}

\newcommand{\Spc}{\mathcal{S}}

\newcommand{\Cat}{\mathcal{C}\mathrm{at}}

\newcommand{\Catco}[1]{\Cat_{(\infty,1)}^{#1\mhyphen\mathrm{cocomp}}}

\newcommand{\LTop}{\mathcal{L}\mathrm{Top}}

\newcommand{\Cocart}{\mathcal{C}\mathrm{ocart}}

\DeclareMathOperator{\Env}{Env}

\newcommand{\op}{\mathrm{op}}
\newcommand{\coop}{\mathrm{coop}}

\newcommand{\Ind}{\mathrm{Ind}}

\newcommand{\Op}{\mathcal{O}\mathrm{p}}

\mathchardef\mhyphen="2D

\newcommand{\cocomp}{\mathrm{cocomp}}

\newcommand{\lex}{\mathrm{lex}}

\newcommand{\lax}{\mathrm{lax}}

\newcommand{\unit}{\mathds{1}}

\newcommand{\cA}{\mathcal{A}}
\newcommand{\cB}{\mathcal{B}}
\newcommand{\cC}{\mathcal{C}}
\newcommand{\cD}{\mathcal{D}}
\newcommand{\cE}{\mathcal{E}}

\newcommand{\cP}{\mathcal{P}}
\newcommand{\cQ}{\mathcal{Q}}

\newcommand{\cU}{\mathcal{U}}
\newcommand{\cV}{\mathcal{V}}

\newcommand{\cX}{\mathcal{X}}
\newcommand{\cY}{\mathcal{Y}}

\newcommand{\sX}{\mathcal{X}}
\newcommand{\sY}{\mathcal{Y}}
\newcommand{\sZ}{\mathcal{Z}}

\makeatletter
\newcommand{\oset}[3][0ex]{%
	\mathrel{\mathop{#3}\limits^{
			\vbox to#1{\kern-2\ex@
				\hbox{$\scriptstyle#2$}\vss}}}}
\makeatother
\newcommand{\eqarrow}{\oset[-.16ex]{\sim}{\longrightarrow}}
\newcommand{\eqarrowback}{\oset[-.16ex]{\sim}{\longleftarrow}}

\tikzset{
	rot90/.style={anchor=south, rotate=90, inner sep=.5mm}
}

\renewcommand{\subset}{\subseteq}

\renewcommand{\epsilon}{\varepsilon}

\def\lra{\longrightarrow}
\def\lla{\longleftarrow}

\def\llra{\def\arraystretch{.1}\begin{array}{c} \lra \\ \lla \end{array}}

\newcommand{\FatDelta}{\mathord{\text{\tikz[baseline=0pt,x=0.1em,y=0.1em]{%
  \node[inner sep=0,anchor=base west] at (0,0) {$\Delta$};
  \draw[line width=0.035em,line cap=butt] (2.67,0.5)--(5.215,5.6);}}}}

\usepackage{silence}
\DeclareRobustCommand{\SkipTocEntry}[5]{}

\begin{document}
\colorlet{background}{white}
\title{Lax-idempotent monads in homotopy theory}

\author{Fernando Abellán}
\address{Max Planck Institute for Mathematics, Vivatsgasse 7, 53111 Bonn, Germany}
\email{abellan@mpim-bonn.mpg.de}

\author{Thomas Blom}
\address{Karlsruhe Institute of Technology, Englerstraße 2, 76131 Karlsruhe, Germany}
\email{thomas.blom@kit.edu}

\date{\today}

\begin{abstract}
	We extend the theory of lax-idempotent monads to the setting of $(\infty,2)$-categories.
	In particular, we prove an analogue of Kock and Zöberlein's characterization of the category of algebras over a lax-idempotent monad.
	We then apply this theory to many examples that arise naturally in homotopy theory.
	For example, we show that $(\infty,1)$-toposes are monadic over presentable $(\infty,1)$-categories, with left adjoint given by the \emph{symmetric $(\infty,1)$-topos}.
	We also show that $\infty$-operads are comonadic over symmetric monoidal $(\infty,1)$-categories through the monoidal envelope construction, obtaining along the way a new proof of Barkan--Steinebrunner's result that the envelope functor is a faithful inclusion.
	We furthermore use this theory to simplify Ramzi's proof that dualizable stable $(\infty,1)$-categories are comonadic over $\aleph_1$-compactly generated stable $(\infty,1)$-categories.
\end{abstract}

\maketitle
\thispagestyle{empty}

\vspace{-5pt}
\begin{figure}[h]
	\includegraphics[width=\textwidth]{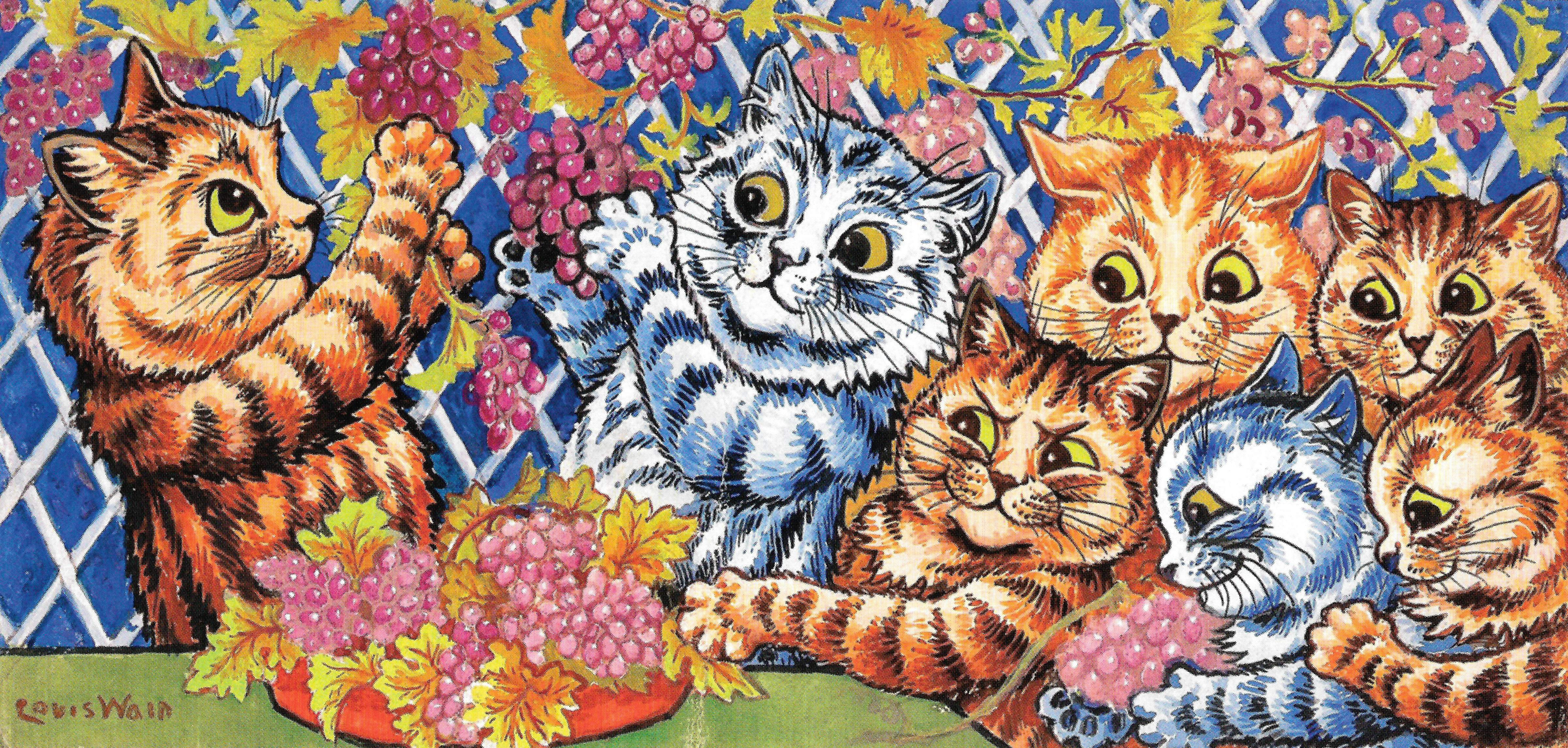}
	\caption[Cover Caption]{\textit{In the Vineyard}, Louis Wain\footnotemark}
\end{figure}
\footnotetext{\url{https://catland.distin.org/post/view/4181} (visited on 07/13/2026)}

\newpage
\tableofcontents

\addtocontents{toc}{\SkipTocEntry}
\section*{Introduction}

Among all monads, the \emph{idempotent} ones are particularly simple: their $(\infty,1)$-categories of algebras form a full subcategory of the original $(\infty,1)$-category, and the data of an idempotent monad on $\cC$ is essentially equivalent to that of an adjunction
\[\begin{tikzcd}[ampersand replacement=\&]
	{L : \cC} \&\& {\cD : R}
	\arrow[""{name=0, anchor=center, inner sep=0}, shift left=2, from=1-1, to=1-3]
	\arrow[""{name=1, anchor=center, inner sep=0}, shift left=2, hook', from=1-3, to=1-1]
	\arrow["\dashv"{anchor=center, rotate=-90}, draw=none, from=0, to=1]
\end{tikzcd}\]
whose right adjoint is fully faithful.

However, there are many examples of adjunctions in nature where the right adjoint is not quite fully faithful, but it is still an inclusion of a subcategory:\footnote{We say that a functor $F \colon \cC \to \cD$ is an \emph{inclusion} (of a subcategory) if it is faithful and the map on groupoid cores $\cC^\simeq \to \cD^\simeq$ is an inclusion of path components; this is sometimes called a \emph{replete subcategory}.}
\begin{enumerate}[(1)]
	\item\label{introex1} The inclusion $\Cat_{(\infty,1)}^\mathrm{rex} \hookrightarrow \Cat_{(\infty,1)}$ of the $(\infty,1)$-category of finitely cocomplete $(\infty,1)$-categories and right exact functors into the $(\infty,1)$-category of all $(\infty,1)$-categories.
	Its left adjoint sends an $(\infty,1)$-category $\cC$ to the full subcategory of $\cP(\cC)$ spanned by finite colimits of representables \cite[\S 5.3.6]{HTT}.
	\item\label{introex2} The inclusion $\CMon(\Cat_{(\infty,1)}) \hookrightarrow \Op_\infty$ of the $(\infty,1)$-category of symmetric monoidal $(\infty,1)$-categories (and strong monoidal functors between them) into the $(\infty,1)$-category of $\infty$-operads.
	Its left adjoint is the symmetric monoidal envelope construction \cite[\S 2.2.4]{HA}.
	\item\label{introex3} For a fixed $(\infty,1)$-category $\cB$, the inclusion $\Cocart(\cB) \hookrightarrow \Cat_{(\infty,1)/\cB}$ of cocartesian fibrations over $\cB$ into all $(\infty,1)$-categories over $\cB$. Its left adjoint is the free fibration \cite{GepnerHaugsengea2017LaxColimitsFree}.
	\item\label{introex4} The inclusion $\LTop \hookrightarrow \presl$ of the $(\infty,1)$-category of $(\infty,1)$-toposes and left exact left adjoints into the $(\infty,1)$-category of all presentable $(\infty,1)$-categories.
	Its left adjoint is the \emph{symmetric $(\infty,1)$-topos}, constructed in \cref{sec:symtop} below.
	\item\label{introex5} The inclusion $(\preslst)^\mathrm{dbl} \hookrightarrow \presl_{\mathrm{St},\aleph_1}$ of dualizable stable $(\infty,1)$-categories into $\aleph_1$-compactly generated stable $(\infty,1)$-categories is a left adjoint, whose right adjoint is given by $\Ind((-)^{\aleph_1})$ (see \cite[Proposition 1.89]{efimovKtheoryLocalizingInvariants2025}).
\end{enumerate}

A feature that all these examples have in common is that they are most naturally viewed as $(\infty,2)$-categories, and these inclusions are \emph{locally fully faithful}, meaning that they induce fully faithful functors on mapping $(\infty,1)$-categories.
A natural question is thus whether there exists, for monads $T$ on an $(\infty,2)$-category $\sX$, a weakening of the notion of an idempotent monad that ensures that the forgetful functor $\Alg_T(\sX) \to \sX$ is a locally fully faithful inclusion.

In classical category theory, this question was answered by Kock \cite{OG-Kock} and further studied by Zöberlein \cite{Zoberlein1976Doctrines2categories}:
they defined a (pseudo)monad $T$ on a $(2,2)$-category $\sX$ to be \emph{lax-idempotent}\footnote{In the classical category theory literature, the names ``KZ-doctrine'' or ``KZ-(pseudo)monad'' are also common for what we call a lax-idempotent monad.} if for any $x$ in $\sX$, the multiplication map $\mu_x \colon TTx \to Tx$ is left adjoint to the unit $\eta_{Tx} \colon Tx \to TTx$.
For such a monad $T$, they proved an explicit description of $\Alg_T(\sX)$ as a locally full sub-$(2,2)$-category of $\sX$.
Our main result is an extension of this description to the higher-categorical setting.

\begin{theoremA}\label{thmA}
	Let $T$ be a monad on an $(\infty,2)$-category $\sX$ and suppose that for any $x$ in $\sX$, the multiplication map $\mu_x \colon TTx \to Tx$ is left adjoint to the unit $\eta_{Tx} \colon Tx \to TTx$.
	Then $\Alg_T(\sX) \to \sX$ is a locally fully faithful inclusion of $(\infty,2)$-categories, and its image is described as follows:
	\begin{enumerate}
		\item Its objects are precisely those $x$ in $\sX$ for which the unit map $\eta_x \colon x \to Tx$ admits a left adjoint $\alpha_x$ such that $\alpha_x \eta_x \simeq \id_x$.
		\item Its 1-morphisms are precisely those $f \colon x \to y$ for which the Beck--Chevalley transformation
		\[\begin{tikzcd}
			Tx & Ty \\
			x & y
			\arrow["Tf", from=1-1, to=1-2]
			\arrow["{\alpha_x}"', from=1-1, to=2-1]
			\arrow[shorten <= 6, shorten >= 6, Rightarrow, from=1-2, to=2-1]
			\arrow["{\alpha_y}", from=1-2, to=2-2]
			\arrow["f"', from=2-1, to=2-2]
		\end{tikzcd}\]
		is invertible.
	\end{enumerate}
\end{theoremA}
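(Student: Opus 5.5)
The strategy is to reduce everything to statements about adjunctions in $\sX$ and about the free-forgetful adjunction $F \dashv U$ for $\Alg_T(\sX)$. First I would establish a lax-idempotent analogue of the fact that algebra structures on an idempotent monad are unique.

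\textbf{Step 1: algebra structures are left adjoint retractions of the unit.} For a $T$-algebra $(x, a\colon Tx \to x)$, I would show that $a \dashv \eta_x$ with counit the invertible unit law $a\eta_x \simeq \id_x$. The unit $\id_{Tx} \Rightarrow \eta_x a$ comes from naturality of $\eta$, namely $\eta_x a \simeq Ta\,\eta_{Tx}$, combined with the unit $\id \Rightarrow \eta_{Tx}\mu_x$ of $\mu_x \dashv \eta_{Tx}$, whiskered by $Ta$ and using $a\mu_x \simeq a\,Ta$. The triangle identities should follow from those of $\mu_x \dashv \eta_{Tx}$. Since left adjoints are unique up to a contractible space of choices, the space of left adjoints of $\eta_x$ whose counit is invertible is either empty or contractible. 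This gives the heuristic for (1).

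The real content is a coherent version: the map from $\Alg_T(\sX)^{\simeq}$ to the space of pairs consisting of $x$ and a left adjoint retraction of $\eta_x$ should be an equivalence. For this I would use the monadicity/Barr--Beck description of $\Alg_T(\sX)$ assumed earlier in the paper. Conversely, given $\alpha_x \dashv \eta_x$ with $\alpha_x\eta_x\simeq\id$, the higher associativity data for $\alpha_x$ are obtained as mates. Each higher coherence lives in a mapping space of the form $\Map(T^n x, x)$, and these are identified, via adjunction, with contractible spaces of retractions.

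\textbf{Step 2: local full faithfulness.} For algebras $x,y$, the mapping category $\Map_{\Alg}(x,y)$ should be the lax limit/equalizer describing morphisms $f$ together with a 2-cell $\alpha_y\circ Tf \Rightarrow f\circ\alpha_x$ satisfying coherences. Here I would use the description of lax morphisms of algebras: they are exactly pairs $(f,\theta)$, and taking the mate under $\alpha\dashv\eta$ identifies lax structures $\theta$ with 2-cells $\eta_y f \Rightarrow Tf\,\eta_x$. Such 2-cells have a canonical choice, the naturality of $\eta$, and the space of coherent ones is contractible, again by lax-idempotence. Hence lax algebra morphisms coincide with $\Map_\sX(x,y)$, and strict (pseudo) morphisms are the full subcategory on those $f$ whose canonical $\theta$, which is the Beck--Chevalley transformation in (2), is invertible. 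Fullness on 2-cells follows because the compatibility condition for 2-cells is automatically satisfied, by the same contractibility. This proves that the forgetful functor is locally fully faithful and gives (2).

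\textbf{Main obstacle.} The hard step is making "all higher coherences are contractible" rigorous in the $(\infty,2)$-setting, i.e.\ showing that the full tower of algebra data, or of morphism data, is determined by the first layer. My first approach would be to show that the forgetful map from $T$-algebras with lax morphisms to $\sX$ is an equivalence onto its image, by exhibiting it as a (co)cartesian-type fibration whose fibres are spaces of left adjoints. Contractibility of spaces of adjoints then gives the result levelwise. Failing that, I would reduce to the universal case of the walking lax-idempotent monad and check it there directly.
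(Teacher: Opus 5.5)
Your Step 1 is correct and matches the paper. Your unit $\id_{Tx} \simeq Ta \circ T\eta_x \Rightarrow Ta \circ \eta_{Tx} \simeq \eta_x a$ is exactly the 2-cell the paper builds in the proof of \cref{laxi-adj-conditions}, applied to $G(\varepsilon_y) = a$. Your remark that the Beck--Chevalley cell of an algebra map is invertible is the naturality argument of \cref{step2}. So you correctly get that the forgetful functor lands in $\sX^T$.

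\textbf{The gap.} The converse direction, which you flag as the main obstacle, is never carried out, and it is the entire content of the theorem. The sentence about higher coherences ``living in $\Map(T^n x,x)$'' and being ``identified via adjunction with contractible spaces of retractions'' is not an argument. $\Map(T^nx,x)$ is not contractible. What you would actually need is that the space of fillers at every stage of an infinite tower of coherence data is contractible, and you give no mechanism for this. Step 2 has the same problem, plus another one. The 1-morphisms of $\Alg_T(\sX)$, as built from Heine's enrichment of $\LMod_{T^{\leq 1}}(\sX^{\leq 1})$, are strict module maps. Nothing available describes its mapping $(\infty,1)$-categories as categories of lax morphisms $(f,\theta)$ with an essentially unique coherent $\theta$. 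Proving such a description, and the ``automatic'' compatibility on 2-cells, needs exactly the coherence control you are missing. The fallback via the walking lax-idempotent monad assumes a nontrivial identification, deferred to the companion paper, together with a description of the walking algebra over it.

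\textbf{The missing idea: compute no coherences at all.} The paper works as follows.
\begin{enumerate}
\item $\sX^T$ is a locally full sub-$(\infty,2)$-category, so $\sX^T \hookrightarrow \sX$ is a monomorphism.
\item This inclusion is a right adjoint with left adjoint $T$. The key input is \cref{lkan-exist}: for $y \in \sX^T$, the functor $(\alpha_y)_*T$ is a fully faithful left adjoint of $\eta_x^*$ on mapping categories. It is built from the modification $d \colon T\eta \Rrightarrow \eta T$ and from $\alpha_y T(\alpha_y) \simeq \alpha_y \mu_y$.
\item The essential image of $(\alpha_y)_*T$ is precisely the maps $Tx \to y$ lying in $\sX^T$. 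Hence $\eta_x^*$ restricts to an equivalence from $\twomap_{\sX^T}(Tx,y)$ to $\twomap_{\sX}(x,y)$.
\item The monad of this adjunction is identified with $T$, via the explicit formula for Heine's comparison of monads induced by your functor $H$.
\item $\Alg_T(\sX) \to \sX$ is terminal among right adjoints over $\sX$ with monad $T$ (\cref{algT-terminal}). This gives a comparison $\sX^T \to \Alg_T(\sX)$ over $\sX$.
\item Terminality, together with the monomorphism property of $\sX^T \hookrightarrow \sX$, forces this comparison and $H$ to be mutually inverse.
\end{enumerate}
In this way all of the ``contractibility of higher coherences'' is absorbed into the universal property of the Eilenberg--Moore object, rather than verified by hand.
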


In light of this result, we will call a monad $T$ on an $(\infty,2)$-category such that $\mu_x$ is left adjoint to $\eta_{Tx}$ \emph{lax-idempotent}.
We also establish a converse to \cref{thmA}: if
\[\begin{tikzcd}[ampersand replacement=\&]
	{F : \cX} \&\& {\cY : U}
	\arrow[""{name=0, anchor=center, inner sep=0}, shift left=2, from=1-1, to=1-3]
	\arrow[""{name=1, anchor=center, inner sep=0}, shift left=2, from=1-3, to=1-1]
	\arrow["\dashv"{anchor=center, rotate=-90}, draw=none, from=0, to=1]
\end{tikzcd}\]
is an adjunction between $(\infty,2)$-categories such that $\cY$ looks like the description of $\Alg_T(\sX)$ given in \cref{thmA} above, then $U$ is monadic and the monad $UF$ is lax-idempotent; see \cref{main-thm-inverse-2} for a precise statement.

Part of the value of \cref{thmA} lies in the fact that it is generally very useful to know that an $(\infty,1)$-category (or, in this case, an $(\infty,2)$-category) is monadic over another one. For example, it can be used to establish that an $(\infty,1)$-category is presentable, it allows one to build canonical free resolutions, it provides techniques for computing limits and colimits, etc.
\Cref{thmA} provides a very simple criterion to check whether a right adjoint is monadic, in situations where the Barr--Beck--Lurie theorem \cite[Theorem 4.7.3.5]{HA} may not be so easy to apply.
For instance, in the examples described above, geometric realizations are generally hard to compute, making it hard to verify condition (c.1) of \cite[Theorem 4.7.3.5]{HA}.
However, the conditions of \cref{thmA} are relatively straightforward to verify for the examples \crefrange{introex1}{introex3}, as we will show in \crefrange{sec:cocart}{sec:cocompletions}.

The inclusion $\LTop \hookrightarrow \presl$ from \cref{introex4} is of a slightly different nature, because it corresponds to an \emph{oplax-idempotent monad}: these are formally dual to lax-idempotent monads, in the sense that one requires $\mu_x$ to be \emph{right} adjoint to $\eta_{Tx}$.
We will show the following higher-categorical analogue of a result of Bunge--Funk \cite[\S 4.2]{2006SingularCoveringsToposes} in \cref{sec:symtop}.

\begin{theoremA}\label{thmB}
	The inclusion $\LTop \hookrightarrow \presl$ admits a left adjoint $\Sym$, called the \emph{symmetric $(\infty,1)$-topos}.
	Moreover, this inclusion is monadic, and the corresponding monad $\Sym$ on $\presl$ is oplax-idempotent.
\end{theoremA}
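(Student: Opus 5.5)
The plan is to obtain the adjunction from the adjoint functor theorem, and then deduce monadicity and oplax-idempotence from the dual version of \cref{thmA}, namely the converse criterion (\cref{main-thm-inverse-2}) applied to $\presl$ viewed as an $(\infty,2)$-category and to the locally full subcategory $\LTop$.

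\emph{Step 1: the left adjoint.} The functor $\LTop \to \presl$ preserves all limits, since limits of $(\infty,1)$-toposes along left exact left adjoints are computed in $\presl$ \cite[\S 6.3.2]{HTT}, and it is accessible. So the adjoint functor theorem gives a left adjoint $\Sym$. For a concrete model I would take $\cC=\cP(C)$ with $C$ small and set $\Sym\cC = \Fun(C^{\op\text{-}\lex\text{-}\mathrm{free}},\Spc)$. Concretely, $\Sym\cC$ is the free $(\infty,1)$-topos on a left exact functor out of the free finite-limit completion of $C$, localized so as to make the universal map $\cC\to\Sym\cC$ colimit-preserving. The only input needed later is the universal property
\[\Fun^{\lex,L}(\Sym\cC,\cE)\simeq\Fun^L(\cC,\cE).\]
This equivalence holds for all $(\infty,1)$-toposes $\cE$, as an equivalence of $(\infty,1)$-categories, i.e.\ it is $(\infty,2)$-categorical.

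\emph{Step 2: the oplax-idempotent structure.} I need to show that the multiplication $\mu_\cC\colon\Sym\Sym\cC\to\Sym\cC$ is right adjoint to $\eta_{\Sym\cC}$. By the universal property, 2-cells out of $\Sym\Sym\cC$ in $\LTop$ are determined by restriction along $\eta_{\Sym\cC}$. Moreover $\mu\eta\simeq\id$. It therefore suffices to produce a counit-type 2-cell $\eta_{\Sym\cC}\mu_\cC\Rightarrow\id$ in $\presl$ and check the triangle identities. I expect the 2-cell to come from the following fact: for an $(\infty,1)$-topos $\cE$, the map $\eta_\cE\colon\cE\to\Sym\cE$ has a \emph{right} adjoint in $\presl$. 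That right adjoint is the algebra structure $\alpha_\cE$, obtained by extending $\id_\cE$, and it is left exact and colimit-preserving.

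For this I would use the dual of \cref{thmA}(1). Objects are those $\cE$ with $\eta_\cE$ admitting a right adjoint $\alpha_\cE$ satisfying $\alpha_\cE\eta_\cE\simeq\id$. Morphisms are those whose (mate) Beck--Chevalley transformation is invertible. So I have to identify these data with toposes and left exact left adjoints. If $\cE$ is a topos, $\alpha_\cE$ is the counit of the adjunction. Conversely, suppose $\eta_\cE$ has a colimit-preserving right adjoint retraction $\alpha_\cE$. Then $\cE$ is a retract of $\Sym\cE$ by a left exact left adjoint, provided $\alpha_\cE$ is left exact. Left exact accessible localizations of toposes are toposes, so $\cE$ would be a topos.

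\emph{Step 3: main obstacle.} The hard step is this converse: from a mere adjunction $\eta_\cE\dashv\alpha_\cE$ in $\presl$ with $\alpha_\cE\eta_\cE\simeq\id$, deduce that $\alpha_\cE$ is left exact. I would first try to show that $\eta_\cE$ is fully faithful, so that $\alpha_\cE$ exhibits $\cE$ as a localization. Then I would express finite limits in $\cE$ via $\alpha_\cE$ applied to finite limits in $\Sym\cE$ of objects in the image of $\eta$. Left exactness of $\alpha_\cE$ should then follow from $\alpha_\cE$ being computed, on generators, as a left Kan extension along a left exact functor. This mirrors Bunge--Funk's 1-categorical argument.

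The morphism condition is easier. For a left adjoint $f\colon\cE\to\cF$ between toposes, the mate $f\alpha_\cE\Rightarrow\alpha_\cF\Sym f$ is invertible. Restricting along $\eta$ reduces this to checking that $f$ preserves finite limits, which gives the identification with left exact left adjoints. With these identifications, \cref{main-thm-inverse-2} (dualized) yields monadicity and oplax-idempotence.
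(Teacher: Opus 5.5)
Your architecture matches the paper's. You identify the oplax analogue of $\sX^T$ from \cref{talg}, for $\Sym$ on $\presl$, with $\LTop$, and conclude with the oplax form of \cref{main-thm-inverse-2}. However, you have put the difficulty on the wrong side.

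The step you call the main obstacle is immediate. If $\eta_\cE\dashv\alpha_\cE$ in $\presl$, then $\alpha_\cE$ is a right adjoint, so it preserves all limits. The equivalence $\alpha_\cE\eta_\cE\simeq\id$ makes $\eta_\cE$ fully faithful by \cref{lem:lali}. Since $\alpha_\cE$ lies in $\presl$, it has a further right adjoint, which is fully faithful by \cref{lem:extra-adj-ff}. So $\cE$ is a left exact accessible localization of $\Sym\cE$, hence a topos. (A bare retract would not suffice; this fully faithful right adjoint does the work.) No Kan extension argument is needed.

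\textbf{The genuine gap.} You only assert the converse: that for an $(\infty,1)$-topos $\cE$ the unit $\eta_\cE\colon\cE\to\Sym\cE$ has a right adjoint in $\presl$, namely the counit $\varepsilon_\cE$. Naming $\varepsilon_\cE$ as the candidate does not prove $\eta_\cE\dashv\varepsilon_\cE$. By the oplax dual of item \cref{laxi-adj4} of \cref{laxi-adj-conditions}, having this for every topos is \emph{equivalent} to the oplax-idempotence you want. It is also exactly the object part of condition (1) of \cref{main-thm-inverse-2}.

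Nor can it come from the universal property, which you say is the only input you need. The required counit $\eta_\cE\varepsilon_\cE\Rightarrow\id_{\Sym\cE}$ is a 2-cell between functors that are not left exact, so $\twomap_{\LTop}(\Sym\cE,-)$ does not see it.

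The paper obtains it from the explicit model (\cref{prop:topos-unit-char}):
\begin{enumerate}
\item Choose $\kappa$ so that $\cE_\kappa$ is closed under finite limits and $\cP(\cE_\kappa)\to\cE$ is a left exact localization.
\item By \cref{exa:lex-cats}, $j\colon\cE_\kappa\to\cE_\kappa^{\lex}$ then has a right adjoint $\rho$ with $\rho j\simeq\id$.
\item The left exact left adjoint $\cP(\cE_\kappa^{\lex})\xrightarrow{\rho_!}\cP(\cE_\kappa)\to\cE$ inverts $\Sigma^\kappa_{\cE_\kappa}$. It therefore descends to $\omega_\cE$ with $\omega_\cE\eta_\cE\simeq\id$.
\item The counit of $j_!\dashv\rho_!$ descends to a 2-cell $\eta_\cE\omega_\cE\Rightarrow\id$, and \cref{easyadjun} finishes.
\end{enumerate}

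\textbf{Step 1.} This is also too quick. $\LTop$ and $\presl$ have large mapping spaces and are not presentable, so the adjoint functor theorem does not apply off the shelf. The explicit construction of \cref{def:sym-topos} is needed both for existence and for the argument above.

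\textbf{Morphisms.} Your argument works for ``left exact implies mate invertible'': both composites are left exact left adjoints agreeing after restriction along $\eta_\cE$, and then you apply the dual of \cref{BCeasy}. This is slicker than the paper's. You omit the converse, which is also quick. Write $f\simeq f\alpha_\cE g$ with $g$ the fully faithful right adjoint of $\alpha_\cE$; then $f\alpha_\cE\simeq\alpha_\cF\Sym(f)$ and $g$ are both left exact, so $f$ is.
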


The left adjoint $\Sym \colon \presl \to \LTop$ was also recently constructed by Lehner \cite[Theorem 18.33]{Lehner2026ShapeTheory}.

If one replaces the monad $T$ with a comonad, one arrives at the definition of an \emph{(op)lax-idempotent comonad}.
\Cref{thmA} has obvious analogues for (op)lax-idempotent (co)monads, which we spell out explicitly in \cref{main-thm-2}.
We also discuss two important examples of lax-idempotent comonads in this paper.
The first of these concerns the inclusion $\CMon(\Cat_{(\infty,1)}) \hookrightarrow \Op_\infty$ mentioned in \cref{introex2}: it turns out that not only is this inclusion monadic, but also that its left adjoint, the envelope construction, is comonadic.

\begin{theoremA}\label{thmC}
	The envelope construction $\Env \colon \Op_\infty \to \CMon(\Cat_{(\infty,1)})$ is comonadic, and the corresponding comonad on $\CMon(\Cat_{(\infty,1)})$ is lax-idempotent.
	In particular, $\Env \colon \Op_\infty \to \CMon(\Cat_{(\infty,1)})$ is a locally fully faithful inclusion of $(\infty,2)$-categories.
\end{theoremA}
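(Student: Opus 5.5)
We prove \cref{thmC} via the comonadic dual of \cref{thmA}, i.e.\ its converse form \cref{main-thm-inverse-2} as spelled out for comonads in \cref{main-thm-2}. Write $\iota \colon \CMon(\Cat_{(\infty,1)}) \hookrightarrow \Op_\infty$ for the inclusion of symmetric monoidal $(\infty,1)$-categories (taking $\cC \mapsto \cC^\otimes$). We view $\Env \dashv \iota$ as an adjunction of $(\infty,2)$-categories, where $2$-morphisms are symmetric monoidal, respectively operadic, natural transformations. The comonad in question is $G = \Env\,\iota$ on $\CMon(\Cat_{(\infty,1)})$.

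\textbf{Step 1: $G$ is lax-idempotent.} Concretely, $G\cC = \Env(\cC^\otimes)$ is the symmetric monoidal category whose objects are finite lists $(c_1,\dots,c_n)$ of objects of $\cC$. Its morphisms $(c_i)_{i\in I} \to (d_j)_{j\in J}$ are a map of finite sets $f\colon I\to J$ together with maps $\bigotimes_{f(i)=j} c_i \to d_j$, and the tensor product is concatenation. The counit $\epsilon_\cC\colon G\cC \to \cC$ is $(c_i)\mapsto \bigotimes_i c_i$. The comultiplication $\delta_\cC \colon G\cC \to GG\cC$ is the image under $\Env$ of the unit $\cC^\otimes \to \iota\Env(\cC^\otimes)$, which sends $(c_1,\dots,c_n)$ to the list of singletons $((c_1),\dots,(c_n))$. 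The dual of lax-idempotence for a comonad requires, for each $\cC$, an adjunction between $\delta_\cC$ and $\epsilon_{G\cC}\colon GG\cC \to G\cC$, with the side dictated by \cref{main-thm-2}; we expect $\epsilon_{G\cC} \dashv \delta_\cC$. The map $\epsilon_{G\cC}$ flattens a list of lists $((c_{1,1},\dots),\dots)$ into one list, and $\epsilon_{G\cC}\delta_\cC = \id$. For the counit of the adjunction, take the morphism in $GG\cC$ from a list of lists $\underline{L} = (L_1,\dots,L_m)$ to $\delta\epsilon(\underline{L})$. Its underlying map of finite sets is the collapse $\coprod_k L_k \to \{1,\dots,m\}$ in the opposite direction, which fails; instead the correct map goes from $\delta\epsilon(\underline{L})$ to $\underline{L}$. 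It has underlying map $\coprod_k |L_k| \to \{1,\dots,m\}$ and components the identities $\bigotimes_{c\in L_k} (c) = L_k$ in $G\cC$. So the natural direction is $\delta_\cC \dashv \epsilon_{G\cC}$, with unit $\id \simeq \epsilon\delta$ and counit $\delta\epsilon \Rightarrow \id$ given by this collapse map. We will check which variance \cref{main-thm-2} requires and match it.

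To verify the triangle identities honestly in the $\infty$-setting, we will not work elementwise. The plan is instead to express the counit via the universal property of $\Env$: symmetric monoidal functors $G\cC \to \cD$ correspond to operad maps $\cC^\otimes \to \cD^\otimes$, and likewise for natural transformations. The triangle identities then reduce to identities of operadic transformations out of $\cC^\otimes$, where they hold because the composites restrict to identities on the generating objects $(c)$.

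\textbf{Step 2: identify the coalgebras.} We use the characterization in \cref{main-thm-2}, or more efficiently \cref{main-thm-inverse-2} applied to $\Env \dashv \iota$ read comonadically. Coalgebras are symmetric monoidal $\cC$ for which the counit $\epsilon_\cC$ has a (fully faithful-type) adjoint $\beta$ splitting it; morphisms are those symmetric monoidal functors satisfying the Beck--Chevalley condition. We must show that the essential image of $\Env$ consists exactly of such $\cC$ and functors, and that $\Env$ is locally fully faithful, i.e.\ $\Map_{\Op}(\cO,\cP)\to \Fun^\otimes(\Env\cO,\Env\cP)$ is fully faithful. Equivalently, by \cref{main-thm-inverse-2}, we need that $\Env$ is conservative in the appropriate sense, and that the comparison functor to $G$-coalgebras is an equivalence. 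For $\cO$ an operad, $\beta\colon \Env\cO \to G\Env\cO$ sends an object $(x_1,\dots,x_n)$ with $x_i\in\cO$ to the list $((x_1),\dots,(x_n))$. Conversely, given a coalgebra $\cC$, we recover an operad from $\beta$ as the subcategory of $\cC$ on objects whose image under $\beta$ is a singleton list, with multimorphisms inherited from $G\cC$.

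\textbf{Main obstacle.} The hardest part is the last step: showing that this reconstruction inverts $\Env$ on objects and that $1$-morphisms of operads correspond exactly to Beck--Chevalley functors. A Beck--Chevalley functor is one that preserves ``singleton-ness'', namely inert decompositions. We would try first to express both sides through the cocartesian-fibration model $\Env(\cO)=\Act(\cO^\otimes)$ and use that $\cO^\otimes \to \Env(\cO)$ restricted to inerts detects the operad structure. Faithfulness of the resulting inclusion, recovering Barkan--Steinebrunner's result, is then automatic from \cref{thmA}'s conclusion.
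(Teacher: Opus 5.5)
\textbf{Verdict: genuine gap.} Your architecture matches the paper's. You show that $\Env\dashv\iota$ is lax-idempotent. You then use the comonadic forms of \cref{main-thm-2} and \cref{main-thm-inverse-2}. Finally, you recover the operad from a coalgebra $\cC$ as the full sub-operad $\cC^{\mathrm{sing}}\subset\iota\cC$ on objects $c$ with $\beta(c)$ of length one. Defining this via $\beta$, instead of the paper's intrinsic notion of $\otimes$-prime, is fine and even spares you the paper's argument identifying the two.

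However, the step you call the ``main obstacle'' is the actual content of the theorem, and the proposal does not prove it. Your hint about inerts in the cocartesian-fibration model is not the idea that makes it work. What is missing is a prime-factorization argument showing that the essential image of $\beta$ is exactly $\Env(\cC^{\mathrm{sing}})$. The paper argues as follows; write $\eta\colon\cC\to\Env\iota\cC$, $c\mapsto(c)$.
\begin{itemize}
\item Since $\beta$ is strong monoidal, $\beta(c)=\eta(d_1)\boxtimes\cdots\boxtimes\eta(d_n)$ with $c\simeq d_1\otimes\cdots\otimes d_n$.
\item Since $\beta$ is fully faithful (\cref{lem:extra-adj-ff}), the counit $\beta\varepsilon\Rightarrow\id$ is invertible at $\beta(c)$.
\item By monoidality, this counit is the concatenation of the maps $\beta(d_i)\simeq\beta\varepsilon\eta(d_i)\to\eta(d_i)$.
\item A concatenation of morphisms in an envelope is invertible only if each factor is (look at the underlying maps of finite sets). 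Hence $\beta(d_i)\simeq\eta(d_i)$, i.e.\ every $d_i$ is a singleton.
\item Conversely, every list of singletons satisfies $(d_1,\dots,d_n)\simeq\beta(d_1)\boxtimes\cdots\boxtimes\beta(d_n)\simeq\beta(d_1\otimes\cdots\otimes d_n)$.
\end{itemize}
So $\beta\colon\cC\to\Env(\cC^{\mathrm{sing}})$ is an equivalence, with inverse the restriction of $\varepsilon$.

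With this in hand, the $1$-morphism part you also single out is formal rather than hard. A coalgebra morphism $f$ satisfies $\beta f\simeq\Env(f)\beta$, and $\Env(f)$ preserves lengths, so $f$ preserves singletons. Hence $\cC\mapsto\cC^{\mathrm{sing}}$ is a functor $\coAlg_{\Env}\to\Op_\infty$. For $\cC=\Env\cP$ one has $\beta=\Env(\eta_\cP)$, so the singletons are exactly the length-one lists and $\Env(\cP)^{\mathrm{sing}}\simeq\cP$. Thus $(-)^{\mathrm{sing}}$ is inverse to $\Env$. Local full faithfulness is then a consequence rather than something to prove separately, and \cref{main-thm-inverse-2} asks for nothing like conservativity of $\Env$.

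A smaller point on Step~1: your final direction $\delta_\cC\dashv\varepsilon_{G\cC}$ is the correct one for a lax-idempotent comonad. But ``the composites restrict to identities on generating objects'' does not verify triangle identities in the $\infty$-setting. There are two ways to fix this.
\begin{itemize}
\item Use the dual of \cref{easyadjun}. Given $\varepsilon_{G\cC}\delta_\cC\simeq\id$, you only need a monoidal transformation $\delta_\cC\varepsilon_{G\cC}\Rightarrow\id$ whose two whiskerings are invertible, which is a pointwise check.
\item Or, as the paper does, use the adjunction $\iota(\varepsilon_\cC)\dashv\eta_{\iota\cC}$ in $\Op_\infty$ (Barkan--Haugseng--Steinebrunner) together with item (4) of \cref{laxi-adj-conditions}.
\end{itemize}
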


The faithfulness of $\Env$ was also proved by Barkan--Steinebrunner \cite[Theorem 3.2.13]{BarkanSteinebrunner2026EquifiberedApproachProperads} through different means.

The second example concerns the inclusion $(\preslst)^\mathrm{dbl} \hookrightarrow \presl_{\mathrm{St},\aleph_1}$ described in \cref{introex5}: we show that this inclusion is comonadic, recovering a result of Ramzi, and that the corresponding comonad $\Ind((-)^{\aleph_1})$ is lax-idempotent.
We also describe how this example is related to the general notion of a \emph{continuous algebra} over a lax-idempotent monad.
Our comonadicity proof should not be seen as independent of Ramzi's proof, though; we use the fact that any dualizable stable $(\infty,1)$-category is $\aleph_1$-compactly generated as input.

\subsection*{Outlook}
The notion of a lax-idempotent monad can easily be generalized: given a monoidal $(\infty,2)$-category $\cV$, a \emph{lax-idempotent algebra} in $\cV$ is an associative algebra $A \in \Alg(\cV)$ with the property that the multiplication map $A \otimes A \to A$ is left adjoint to the unit $\eta \otimes \id_A \colon A \to A \otimes A$.
Lax-idempotent monads are a special case of these: they are simply lax-idempotent algebras in $\Fun(\sX,\sX)$.
We therefore state some of our results at the level of lax-idempotent algebras (see \cref{sec:laxi-algebras}) and later specialize to the case of lax-idempotent monads.

In the companion paper \cite{companion}, we will further develop the theory of lax-idempotent algebras and show that they have many properties resembling those of idempotent algebras.
For example, we show that the full sub-$(\infty,2)$-category $\Alg^\mathrm{li}(\cV)$ of $\Alg(\cV)$ spanned by the lax-idempotent algebras is a poset, and that any lax-idempotent algebra $A$ is fully determined by its unit map $\unit \to A$, in the sense that the forgetful functor $\Alg^\mathrm{li}(\cV) \to \cV_{\unit/}$ is a faithful inclusion.
In particular, all the coherences of a lax-idempotent algebra come ``for free''.

We will also prove an $(\infty,2)$-categorical version of a classical observation of Street \cite[(2.27)]{Street1980FibrationsBicategories}.
Recall that monads on $\sX$ may be identified with strong monoidal functors $\Delta_+ \to \Fun(\sX,\sX)$.
Here $\Delta_+$ denotes the category of finite linearly ordered sets, with its monoidal structure given by concatenation.
The category $\Delta_+$ naturally refines to a monoidal $(2,2)$-category $\FatDelta_+$ (or, more precisely, a poset-enriched category) by declaring that there exists a 2-morphism $f \Rightarrow g$ precisely if $f \leq g$ in the pointwise ordering.
We will show that a monad $T$ on an $(\infty,2)$-category $\sX$ is lax-idempotent precisely if the corresponding strong monoidal functor $\Delta_+ \to \Fun(\sX,\sX)$ can be extended to a strong monoidal functor $\FatDelta_+ \to \Fun(\sX,\sX)$.
More generally, we will show that for any monoidal $(\infty,2)$-category $\cV$, the full sub-$(\infty,2)$-category inclusion $\Alg^\mathrm{li}(\cV) \hookrightarrow \Alg(\cV)$ agrees with the functor
\[\Fun^\otimes(\FatDelta_+,\cV) \to \Fun^\otimes(\Delta_+,\cV)\]
defined by restriction along $\Delta_+ \to \FatDelta_+$. In other words, we will identify the delooping of $\FatDelta_{+}$ with the walking lax-idempotent monad. Moreover, we will give a description of the walking lax-idempotent adjunction in \cite{companion}, a notion which we define in \cref{def:laxi-adjunctions} below.

\subsection*{Overview of the paper}

This paper is split into two parts: first we develop the basic theory of lax-idempotent monads on $(\infty,2)$-categories, and in the second part we discuss all the examples mentioned above.
We discuss some preliminaries on adjunctions in $(\infty,2)$-categories in \cref{sec:prelims-adj}; here we also collect some useful facts that we found hard to locate in the literature.
We then introduce lax-idempotent algebras in monoidal $(\infty,2)$-categories in \cref{sec:laxi-algebras} and prove their basic properties.
After developing the basic theory of monads on $(\infty,2)$-categories in \cref{sec:heine}, we introduce lax-idempotent monads in \cref{sec:limonad} as lax-idempotent algebras in $\Fun(\sX,\sX)$.
\Cref{sec:talg} is the technical heart of the paper: here we prove our main result \cref{thmA} and its converse, \cref{main-thm-inverse-2}.
We conclude the first part by proving that lax-idempotent monads on $(\infty,2)$-categories give rise to adjoint functor theorems, following \cite{ArkorLibertiea2024AdjointFunctorTheorems}.

In the second part, we discuss the following examples in detail:
\begin{itemize}
	\item Cocartesian fibrations of $(\infty,1)$-categories.
	\item $\infty$-operads and symmetric monoidal $(\infty,1)$-categories. Here we also prove \cref{thmC}.
	\item (Co)completions of $(\infty,1)$-categories for a given class of colimit shapes.
	\item Compactly assembled and dualizable $(\infty,1)$-categories.
	This also includes a general discussion of \emph{continuous algebras} over a lax-idempotent monad.
	\item Presentable $(\infty,1)$-categories and $(\infty,1)$-toposes. Here we prove \cref{thmB}.
\end{itemize}

\subsection*{Use of large language models (LLMs)} LLMs were used for proofreading the paper and generating pre-submission ``referee reports''. These reports are available upon request. No LLMs were used for generating mathematical ideas or content or for any of the writing.

\subsection*{Acknowledgements}

We would like to thank Adam Dauser, Georg Lehner, Tobias Lenz, Sil Linskens, Louis Martini, Lyne Moser, Maxime Ramzi and Jan Steinebrunner for conversations related to this paper and their interest in the project.

We are grateful to the Max Planck Institute for Mathematics in Bonn for its hospitality and support during the writing of this paper.
The second author is also grateful to the Copenhagen Centre for Geometry and Topology (DNRF151), where this project was started. He was partially supported by the European Union through an ERC grant (MaFC, 101221003).

\subsection*{Notation and terminology}
\begin{itemize}
	\item Given an $(\infty,2)$-category $\sX$, we write $\twomap_\sX(x,y)$ or $\twomap(x,y)$ for the $(\infty,1)$-category of maps from $x$ to $y$.
	\item Given $(\infty,2)$-categories $\sX$ and $\sY$, $\Fun(\sX,\sY)$ will denote the $(\infty,2)$-category of functors from $\sX$ to $\sY$, i.e.\ the internal hom of $\Cat_{(\infty,2)}$.
	\item We use $- \whisk -$ to denote the whiskering of 1-morphisms with 2-morphisms.
	\item Given an $(\infty,2)$-category $\sX$, we write $\sX^\op$ and $\sX^\mathrm{co}$ for the $(\infty,2)$-categories obtained by reversing the 1-morphisms and 2-morphisms of $\sX$, respectively.
\end{itemize}

\part{The theory of lax-idempotent monads}

\section{Some generalities on adjunctions in \texorpdfstring{$(\infty,2)$}{(∞,2)}-categories}\label{sec:prelims-adj}

We will assume the reader is familiar with the theory of $(\infty,2)$-categories and use its language freely in a model-independent way.
For introductions to the theory of $(\infty,2)$-categories, see for example \cite[\S 2]{Haugseng2021LaxTransformationsAdjunctions} and \cite[Appendix A]{BlansBlom2024ChainRuleGoodwillie} and the references cited there.
The purpose of this section is to collect a few facts about adjunctions in $(\infty,2)$-categories that we found hard to locate in the literature.
Let us first define \emph{whiskerings}:

\begin{definition}[Whiskerings]
	Let $\sX$ be an $(\infty,2)$-category and suppose we are given a diagram
	\[\begin{tikzcd}
		x & y & z & w
		\arrow["f"', from=1-1, to=1-2]
		\arrow[""{name=0, anchor=center, inner sep=0}, "g", curve={height=-12pt}, from=1-2, to=1-3]
		\arrow[""{name=1, anchor=center, inner sep=0}, "{g'}"', curve={height=12pt}, from=1-2, to=1-3]
		\arrow["h"', from=1-3, to=1-4]
		\arrow["\alpha", between={0.2}{0.8}, Rightarrow, from=0, to=1]
	\end{tikzcd}\]
	in $\sX$.
	Then one can compose at either $y$ or $z$ to form the \emph{whiskerings} $\alpha \whisk f \colon gf \Rightarrow g'f$ and $h \whisk \alpha \colon hg \Rightarrow hg'$, respectively.
	Concretely, the 2-morphisms $\alpha \whisk f$ and $h \whisk \alpha$ are defined as the images of $(\id_f, \alpha)$ and $(\alpha, \id_h)$ under the composition functors
	\[\twomap(x,y) \times \twomap(y,z) \to \twomap(x,z) \quad \text{and} \quad \twomap(y,z) \times \twomap(z,w) \to \twomap(y,w)\]
	respectively.
	We will sometimes omit the $\whisk$ and simply write $\alpha f$ and $h \alpha$.
\end{definition}

\begin{definition}[Adjunctions in an $(\infty,2)$-category]\label{def:adj}
	Given 1-morphisms $f \colon x \to y$ and $g \colon y \to x$ in an $(\infty,2)$-category $\sX$, we say that $f$ is left adjoint to $g$, denoted $f \dashv g$, if there exist 2-morphisms $\eta \colon \id_x \Rightarrow gf$ and $\varepsilon \colon fg \Rightarrow \id_y$ such that there exist homotopies making the triangles
	\[\begin{tikzcd}[ampersand replacement=\&]
		f \& fgf \\
		\& f
		\arrow["{f \whisk \eta}", Rightarrow, from=1-1, to=1-2]
		\arrow["{\id_f}"', Rightarrow, from=1-1, to=2-2]
		\arrow["{\varepsilon \whisk f}", Rightarrow, from=1-2, to=2-2]
	\end{tikzcd}
	\quad \text{and} \quad
	\begin{tikzcd}[ampersand replacement=\&]
		g \& gfg \\
		\& g
		\arrow["{\eta \whisk g}", Rightarrow, from=1-1, to=1-2]
		\arrow["{\id_g}"', Rightarrow, from=1-1, to=2-2]
		\arrow["{g \whisk \varepsilon}", Rightarrow, from=1-2, to=2-2]
	\end{tikzcd}\]
	commute.
	Observe that these \emph{triangle identities} can be checked in the homotopy $(2,2)$-category of $\sX$.
\end{definition}

We will now prove several facts about \emph{reflective} adjunctions that are used throughout this paper.

\begin{definition}[Reflective localizations]
	Let $f : y \rightleftarrows x : g$ be an adjunction in an $(\infty,2)$-category $\sX$.
	We will call this adjunction \emph{reflective}, and call $f$ a \emph{reflective localization} or \emph{reflective left adjoint}, if the counit $\varepsilon \colon fg \Rightarrow \id_x$ is an equivalence.
	If instead the unit $\eta \colon \id_y \Rightarrow gf$ is an equivalence, then we call this adjunction \emph{coreflective}, and we call $g$ a \emph{coreflective localization} or \emph{coreflective right adjoint}.
\end{definition}

They can be recognized as follows.

\begin{lemma}\label{easyadjun}
	Let $\sX$ be an $(\infty,2)$-category and $f \colon x \to y$ and $g \colon y \to x$ be 1-morphisms in $\sX$ together with an equivalence $fg \simeq \id_y$.
	Then this equivalence is the counit of a (necessarily reflective) adjunction $f \dashv g$ if and only if there exists a 2-morphism $\nu \colon \id_x \Rightarrow gf$ such that the 2-morphisms $f \whisk \nu$ and $\nu \whisk g$ are invertible.
\end{lemma}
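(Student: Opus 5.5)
Since the triangle identities can be checked in the homotopy $(2,2)$-category of $\sX$ (see \cref{def:adj}), we work there throughout. Write $\varepsilon \colon fg \Rightarrow \id_y$ for the given equivalence.

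The forward direction is immediate. If $\varepsilon$ is the counit of an adjunction $f \dashv g$ with unit $\eta$, take $\nu = \eta$. The triangle identities say that $(\varepsilon f)(f\eta) = \id_f$ and $(g\varepsilon)(\eta g) = \id_g$. Since $\varepsilon$ is invertible, so are $\varepsilon f$ and $g\varepsilon$. Hence $f\eta = (\varepsilon f)^{-1}$ and $\eta g = (g\varepsilon)^{-1}$ are invertible.

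For the converse, suppose $\nu$ is given with $f\nu$ and $\nu g$ invertible. We cannot expect $\nu$ itself to satisfy the triangle identities, so we correct it. Define
\[ \eta \colon \id_x \xRightarrow{\ \nu\ } gf \xRightarrow{\ (\nu g f)^{-1}\ } gfgf \xRightarrow{\ g\varepsilon f\ } gf, \]
or some variant built from $\nu$, $\varepsilon$ and inverses. Simpler still, we can characterize $\eta$ by the requirement that $f\eta = (\varepsilon f)^{-1}$. The plan is to show that such an $\eta$ exists and then verify both triangle identities.

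The key observation is that $\nu$ exhibits $gf$ as an idempotent-like object, so that precomposition and postcomposition with $f$ and $g$ become equivalences on the relevant $2$-morphisms. The steps are as follows.

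\begin{enumerate}
	\item Since $\nu g$ is invertible and $\varepsilon$ is invertible, the composite $g \xRightarrow{\nu g} gfg \xRightarrow{g\varepsilon} g$ is an automorphism $\theta$ of $g$.
	\item Replace $\nu$ by $\eta := (\theta^{-1} f)\circ \nu$, or more carefully by $\eta = (g\varepsilon f)\circ(\nu g f)^{-1}\circ \nu$ suitably arranged. By naturality (the interchange law), $\eta$ still has $f\eta$ and $\eta g$ invertible, and it now satisfies $(g\varepsilon)(\eta g) = \id_g$.
	\item Deduce the other triangle identity. Set $\phi := (\varepsilon f)(f\eta)$, an endomorphism of $f$; it is invertible because both factors are. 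Using the interchange law together with the identity established in step (2), compute that $\phi\, g = \id_{fg}$ after conjugating by $\varepsilon$.
	\item Whiskering further by $f$ and using the invertibility of $f\nu$ (which identifies $fgf$ with $f$ via $\varepsilon f$), conclude that $\phi = \id_f$.
\end{enumerate}

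Step (4) is the standard argument that, given an adjunction-like pair in which one triangle identity holds and all the relevant maps are invertible, the second triangle identity follows. It is essentially the proof that any equivalence can be promoted to an adjoint equivalence, applied here to the reflective situation.

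The main obstacle is the bookkeeping in steps (2) and (3). One must choose the correction of $\nu$ so that exactly one triangle identity holds on the nose, and then use the interchange law, $\alpha\beta$-naturality and the invertibility of $f\nu$ to transport that identity to the other side. A fallback is to check the statement on hom-categories. The equivalence $fg \simeq \id$ together with $\nu$ exhibits $g_* \colon \twomap(w,y) \to \twomap(w,x)$ as fully faithful with left adjoint $f_*$, and representability then yields the adjunction in $\sX$.
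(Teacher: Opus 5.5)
Your argument is correct and is essentially the paper's proof: the paper runs the argument of Riehl--Verity's Lemma 2.1.11 dually, keeping the counit $fg\simeq\id_y$ fixed and correcting $\nu$ into a unit. Your $\eta=(\theta^{-1}f)\circ\nu$ does exactly this, with $(g\varepsilon)(\eta g)=\id_g$ by interchange, and the second triangle identity follows from $\phi g=\id_{fg}$ (via $\varepsilon fg=fg\varepsilon$) and the relation $(f\nu)\circ\phi=(\phi gf)\circ(f\nu)$ with $f\nu$ invertible. Discard the displayed formula and the ``more carefully'' variant, though: they are ill-typed because $(\nu gf)^{-1}$ goes $gfgf\Rightarrow gf$, and the correct explicit form is $\eta=(\nu gf)^{-1}\circ(g\varepsilon f)^{-1}\circ\nu$.
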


\begin{proof}
	This follows by the same argument as \cite[Lemma 2.1.11]{RiehlVerity2022ElementsCategoryTheory}.
	Note that in the proof, Riehl--Verity modify the counit of the adjunction while leaving the unit intact, but by a dual argument one can leave the counit $fg \simeq \id_y$ intact while modifying $\nu$ to become a unit.
\end{proof}

We now give several equivalent characterizations of reflective adjunctions.
Recall that a 1-morphism $f \colon x \to y$ in an $(\infty,2)$-category $\sX$ is called \emph{fully faithful} if for any $z$ in $\sX$, the map
\[f_* \colon \twomap_{\sX}(z,x) \to \twomap_{\sX}(z,y)\]
is fully faithful.

\begin{lemma}\label{lem:lali}
	Let $f : y \rightleftarrows x : g$ be an adjunction in an $(\infty,2)$-category $\sX$.
	Then the following statements are equivalent:
	\begin{enumerate}[(a)]
		\item\label{lali1} The adjunction is reflective (i.e. the counit $\varepsilon \colon fg \Rightarrow \id_x$ is an equivalence).
		\item\label{lali2} The composite $fg \colon x \to x$ is an equivalence.
		\item\label{lali3} The right adjoint $g$ is fully faithful in $\sX$.
	\end{enumerate}
\end{lemma}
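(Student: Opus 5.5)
We prove (a)$\Rightarrow$(b), (b)$\Rightarrow$(a) and (a)$\Leftrightarrow$(c). The implication (a)$\Rightarrow$(b) is immediate, since an invertible counit exhibits $fg\simeq\id_x$.

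\emph{(b)$\Rightarrow$(a).} Suppose $fg$ is an equivalence. The triangle identity $(g\whisk\varepsilon)\circ(\eta\whisk g)=\id_g$, whiskered on the left by $f$, gives $(fg\whisk\varepsilon)\circ(f\eta g)=\id_{fg}$. Since $fg$ is an equivalence, whiskering by it is an equivalence on mapping $(\infty,1)$-categories. Hence $\varepsilon$ is a retract of the invertible 2-morphism $\id_{fg}$ in a suitable sense, but this alone is not enough. A cleaner route is to observe that $fg$ carries the structure of a monad on $x$ (the one coming from the adjunction viewed in $\sX^{\mathrm{co}}$, i.e.\ the comonad $fg$ with counit $\varepsilon$ and comultiplication $f\eta g$). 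Because all triangle identities can be checked in the homotopy $(2,2)$-category, we may pass to the homotopy 2-category and argue classically.

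Write $c=fg$. The comultiplication $\delta=f\eta g\colon c\Rightarrow cc$ satisfies $(\varepsilon c)\delta=\id_c=(c\varepsilon)\delta$. Since $c$ is an equivalence, $c\varepsilon\colon cc\Rightarrow c$ is invertible if and only if $\varepsilon$ is, because whiskering by an equivalence is conservative. Naturality of $\varepsilon$ gives $\varepsilon\circ(c\varepsilon)=\varepsilon\circ(\varepsilon c)$. Moreover, by the interchange law, $\varepsilon c$ and $c\varepsilon$ agree after precomposition by the equivalence $c$ on both sides; this is the classical fact that for an endo-equivalence the two whiskerings of a transformation $c\Rightarrow\id$ are conjugate, which I would prove by the usual Eckmann--Hilton-style argument. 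From this, $\delta$ is a section of $c\varepsilon$ and a section of $\varepsilon c$, and one deduces that $\delta$ is invertible with inverse $c\varepsilon$. Then $c\varepsilon$ is invertible, so $\varepsilon$ is invertible. This step is the main obstacle. As a fallback, I would prove (b)$\Rightarrow$(c) directly, as described below.

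\emph{(a)$\Leftrightarrow$(c).} For every $z$, postcomposition gives an adjunction of $(\infty,1)$-categories $f_*\dashv g_*$ on $\twomap(z,y)\rightleftarrows\twomap(z,x)$, with counit $\varepsilon\whisk-$, since 2-functors preserve adjunctions and the triangle identities whisker correctly. A right adjoint between $(\infty,1)$-categories is fully faithful iff its counit is an equivalence. So (a) implies each $g_*$ is fully faithful. Conversely, if $g$ is fully faithful, taking $z=x$ shows that $\varepsilon\whisk\id_x=\varepsilon$ is invertible.

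\emph{Fallback for (b)$\Rightarrow$(c).} If $fg$ is an equivalence, then $(fg)_*=f_*g_*$ is an equivalence of $(\infty,1)$-categories. Hence $g_*$ admits a left inverse up to equivalence and is faithful on mapping spaces. Fullness then follows from the adjunction: the map $\Map(a,b)\to\Map(g_*a,g_*b)\simeq\Map(f_*g_*a,b)$ is precomposition with $\varepsilon_a$. Since $g_*$ is split-injective on mapping spaces and this map is a retract of an equivalence induced by $f_*g_*$, it is an equivalence. This reproduces the classical argument at the level of homotopy categories, which suffices because fully faithfulness can be detected there on mapping spaces.
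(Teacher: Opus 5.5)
Your argument for (a)$\Leftrightarrow$(c) is exactly the paper's: $\twomap(z,-)$ carries $f\dashv g$ to $f_*\dashv g_*$, and at $z=x$ the counit component at $\id_x$ is $\varepsilon$. The implication (a)$\Rightarrow$(b) is trivial. For (b)$\Rightarrow$(a) the paper just imports \cite[Lemma 3.3.1]{CarmeliSchlankea2021AmbidexterityHeight}, so your two attempts have to be judged on their own.

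\emph{Your primary argument for (b)$\Rightarrow$(a) has a genuine gap.} The ``classical fact'' that $c\varepsilon$ and $\varepsilon c$ agree (or are conjugate) for an endo-equivalence $c$ and an arbitrary $\varepsilon\colon c\Rightarrow\id$ is false. Interchange only gives $\varepsilon\circ(c\varepsilon)\simeq\varepsilon\circ(\varepsilon c)$, and cancelling $\varepsilon$ presupposes that it is invertible. Here is a counterexample. Take the category with objects $x_n$ ($n\in\mathbb{Z}$) whose only non-identity arrows are $e_n,f_n\colon x_{n+1}\to x_n$ and a single arrow $x_m\to x_n$ for each $m\geq n+2$. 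Let $c$ shift indices by one while swapping $e$ and $f$, and set $\varepsilon_{x_n}=e_n$. This $\varepsilon$ is natural, yet $(c\varepsilon)_{x_n}=f_{n+1}\neq e_{n+1}=(\varepsilon c)_{x_n}$, and there are no non-trivial automorphisms to conjugate by. In the lemma's situation $c\varepsilon\simeq\varepsilon c$ does hold, but only a posteriori. Moreover, even granting it, being a section of $c\varepsilon$ does not make $\delta$ invertible; you would additionally need $\delta\circ c\varepsilon\simeq cc\varepsilon\circ\delta c$.

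A working version of this route goes as follows. Since $c\whisk-$ is an autoequivalence of $\twomap(x,x)$, write $\delta\simeq c\whisk\delta'$ for some $\delta'\colon\id_x\Rightarrow c$. Then $c\varepsilon\circ\delta\simeq\id_c$ gives $\varepsilon\circ\delta'\simeq\id$. Interchange gives $\delta'\circ\varepsilon\simeq\varepsilon c\circ c\delta'\simeq\varepsilon c\circ\delta\simeq\id_c$.

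\emph{Your fallback (b)$\Rightarrow$(c) is correct}, and together with (c)$\Rightarrow$(a) it completes the proof. You do need to name the retraction you allude to. Write $L\dashv R$ for $f_*\dashv g_*$, with unit $\eta'$ and counit $\varepsilon'$. Let $r\colon\Map(LRa,LRb)\to\Map(Ra,Rb)$ send $v$ to $R\varepsilon'_b\circ Rv\circ\eta'_{Ra}$. Naturality of $\eta'$ and a triangle identity give $r\circ L\simeq\id$. So $L$ on $\Map(Ra,Rb)$ has left inverse $r$ and right inverse $R\circ(LR)^{-1}$, hence is an equivalence. Therefore $R\simeq L^{-1}\circ(LR)$ is an equivalence on $\Map(a,b)$.

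Compared with the paper, this replaces the citation with a self-contained argument. It uses the same reduction to the mapping $(\infty,1)$-categories $\twomap(z,-)$ as the proof of (a)$\Leftrightarrow$(c), rather than working directly in the homotopy $2$-category.
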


\begin{proof}
	The equivalence of \cref{lali1} and \cref{lali2} follows as in \cite[Lemma 3.3.1]{CarmeliSchlankea2021AmbidexterityHeight}, noting that their proof goes through for adjunctions in an arbitrary $(\infty,2)$-category.

	The equivalence of \cref{lali1} and \cref{lali3} is well known when $\sX = \Cat_{(\infty,1)}$.
	For the general case, observe that since $\twomap_{\sX}(z,-)$ is a 2-functor, it preserves adjunctions whose counit is an equivalence. This shows that \cref{lali1} implies \cref{lali3}.
	For the converse implication, note that the counit component of $f_* : \twomap_{\sX}(x,y) \rightleftarrows \twomap_{\sX}(x,x) : g_*$ at $\id_x$ is given by $\varepsilon \colon fg = f_*g_*(\id_x) \Rightarrow \id_x$, which is invertible since $g_*$ is fully faithful.
\end{proof}

Let us also record the following standard consequences.

\begin{lemma}\label{lem:lali-dk-loc}
	Let $f : \cC \rightleftarrows \cD : g$ be an adjunction between $(\infty,1)$-categories.
	Then $f$ is a reflective localization if and only if it is a Dwyer--Kan localization.
\end{lemma}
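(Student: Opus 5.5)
We prove both implications by comparing $\cD$ with the full subcategory of $\cC$ that $g$ lands in. Throughout, a Dwyer--Kan localization is a functor $f \colon \cC \to \cD$ inducing an equivalence $\cC[W^{-1}] \simeq \cD$, where $W$ is the class of morphisms sent to equivalences. Equivalently, for every $(\infty,1)$-category $\cE$, restriction
\[f^* \colon \Fun(\cD,\cE) \to \Fun(\cC,\cE)\]
is fully faithful with essential image the functors inverting $W$.

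Suppose first that $f$ is a reflective localization, so the counit $fg \Rightarrow \id_\cD$ is invertible. Because $\Fun(-,\cE)$ is a 2-functor $\Cat_{(\infty,1)}^{\op} \to \Cat_{(\infty,2)}$ that reverses 1-morphisms, it sends the adjunction $f \dashv g$ to an adjunction $g^* \dashv f^*$. The counit $fg \Rightarrow \id$ becomes the unit $\id \Rightarrow g^* f^*$, which is again invertible. Hence $f^*$ is fully faithful; this is the dual form of \cref{lem:lali}, applied in $\Cat_{(\infty,1)}$.

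It remains to identify the image of $f^*$. Any $f^* H = Hf$ inverts $W$ by definition of $W$. Conversely, let $G \colon \cC \to \cE$ invert $W$. The unit components $\eta_c \colon c \to gfc$ lie in $W$, since $f\eta_c$ is inverse to the equivalence $\varepsilon_{fc}$ by the triangle identity. Therefore $G\eta \colon G \Rightarrow Ggf$ is an equivalence, and $G \simeq f^*(Gg)$. This shows that $f$ is a Dwyer--Kan localization.

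For the converse, suppose $f$ is a Dwyer--Kan localization. By \cref{lem:lali}, it suffices to show that $g$ is fully faithful. For $d, d' \in \cD$ we have natural equivalences
\[\Map_\cC(gd, gd') \simeq \Map_\cD(fgd, d').\]
Thus full faithfulness of $g$ amounts to $\varepsilon_d$ being an equivalence for each $d$. By the triangle identity $\varepsilon_{fc} \circ f\eta_c \simeq \id$, so we reduce to showing that every component $\eta_c$ lies in $W$; essential surjectivity of $f$ then covers all $d$ up to equivalence.

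This reduction is the main obstacle, and I would not attack $W$ directly. Instead, I would transport the adjunction along full faithfulness of $f^*$. Take $\cE = \cD$. Then $f^*$ sends the transformation $\varepsilon \colon fg \Rightarrow \id_\cD$ to $\varepsilon f \colon fgf \Rightarrow f$. By the triangle identity $\varepsilon f \circ f\eta \simeq \id_f$, the map $\varepsilon f$ is a split epimorphism whose section $f\eta$ is the image under $f^*$ of something only if $fgf$ lies in the image of $f^*$; it does, being $f^*(fg)$. Since $f^*$ is fully faithful, we can lift $f\eta$ to a transformation $\sigma \colon \id_\cD \Rightarrow fg$ with $\varepsilon \circ \sigma \simeq \id$.

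To show that $\sigma \circ \varepsilon \simeq \id_{fg}$, again use faithfulness of $f^*$: it is enough to check this after precomposing with $f$. There
\[f\eta \circ \varepsilon f \simeq fg\varepsilon f \circ f\eta gf\]
by naturality, and this composite is the identity by the other triangle identity (whiskered by $f$). Hence $\varepsilon$ is invertible, and the adjunction is reflective.
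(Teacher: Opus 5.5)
Your proof is correct in substance, and your converse is the paper's argument unwound by hand. The paper gets ``reflective $\Rightarrow$ Dwyer--Kan'' by citing \cite[Proposition 5.2.7.12]{HTT}, which your first half proves directly. For the converse, the paper notes that a Dwyer--Kan localization is fully faithful in $\Cat_{(\infty,1)}^\op$, where $f$ is the right adjoint of $g \dashv f$, and applies \cref{lem:lali} there. At $\cE = \cD$ this says the counit of $g^* \dashv f^*$ at $\id_\cD$, which is $\varepsilon$, is invertible because $f^*$ is fully faithful. Your $\sigma$ is exactly the inverse of that counit, built by hand. The paper's route is a one-liner reusing \cref{lem:lali}; yours is self-contained. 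Your preliminary reduction to $\eta_c \in W$, and the appeal to essential surjectivity, are never used, since you end up showing $\varepsilon$ invertible directly.

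Two steps need correcting.

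First, $\Fun(-,\cE)$ reverses 1-morphisms but not 2-morphisms. So $\varepsilon$ becomes the \emph{counit} $g^*f^* \Rightarrow \id$ of $g^* \dashv f^*$, not a unit $\id \Rightarrow g^*f^*$. Your conclusion that the right adjoint $f^*$ is fully faithful still holds.

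Second, the identity $f\eta \circ \varepsilon f \simeq fg\varepsilon f \circ f\eta gf$ cannot come from naturality of $\eta$ and $\varepsilon$ alone. The right-hand side is $\id_{fgf}$ for every adjunction, so this would make $\varepsilon f$ invertible for, say, every free--forgetful adjunction. What you need is naturality of $\sigma$, namely $\sigma \circ \varepsilon \simeq fg\varepsilon \circ \sigma fg$. Whiskering this with $f$ and using $\sigma f \simeq f\eta$ gives
\[ f^*(\sigma \circ \varepsilon) \simeq fg\varepsilon f \circ \sigma fgf \simeq fg\varepsilon f \circ f\eta gf \simeq f(g\varepsilon \circ \eta g)f \simeq \id_{fgf}. \]
Your faithfulness argument then finishes the proof.
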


\begin{proof}
	One direction is given by \cite[Proposition 5.2.7.12]{HTT}, the other follows from \cref{lem:lali} since any Dwyer--Kan localization is fully faithful in $\Cat_{(\infty,1)}^\op$.
\end{proof}

\begin{lemma}\label{lem:extra-adj-ff}
	Let $f : y \rightleftarrows x : g$ be an adjunction in an $(\infty,2)$-category $\sX$ and suppose that $f$ admits a further left adjoint $q$.
	Then $g$ is fully faithful if and only if $q$ is fully faithful.
	Equivalently, the adjunction $f \dashv g$ is reflective if and only if the adjunction $q \dashv f$ is coreflective.
\end{lemma}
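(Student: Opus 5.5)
We have $q \dashv f \dashv g$ with $q \colon x \to y$, $f \colon y \to x$, $g \colon x \to y$. The plan is to reduce to the case $\sX = \Cat_{(\infty,1)}$ by representing, and then use the standard mate correspondence between the unit of $q \dashv f$ and the counit of $f \dashv g$. By \cref{lem:lali}, $g$ is fully faithful if and only if the counit $\varepsilon \colon fg \Rightarrow \id_x$ is invertible. Applying \cref{lem:lali} in $\sX^{\mathrm{co}}$ (where adjunctions reverse), $q$ is fully faithful if and only if the unit $\eta' \colon \id_x \Rightarrow fq$ of $q \dashv f$ is invertible. So it suffices to show that $\varepsilon$ is invertible if and only if $\eta'$ is; this is also exactly the ``equivalently'' clause.

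The key observation is that $fq$ is left adjoint to $fg$: composing $q \dashv f$ with $f \dashv g$ gives $fq \dashv fg$ as endomorphisms of $x$. Under this composite adjunction, the 2-morphisms $\eta' \colon \id_x \Rightarrow fq$ and $\varepsilon \colon fg \Rightarrow \id_x$ are mates of one another (with $\id_x \dashv \id_x$). To verify this, I would compute the mate of $\varepsilon$ explicitly using the triangle identities in the homotopy $(2,2)$-category, which \cref{def:adj} allows. Doing so identifies the mate with $\eta'$ up to the canonical identifications.

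Mates of invertible 2-morphisms are invertible, since the mate correspondence is functorial and compatible with vertical composition. Conversely, the inverse of $\varepsilon$ has a mate that is inverse to $\eta'$. Hence $\varepsilon$ is invertible exactly when $\eta'$ is.

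Alternatively, one can give a representable argument. Both conditions can be tested after applying $\twomap_\sX(z,-)$ for all $z$. This 2-functor preserves the adjunctions and their units and counits, and $\eta'$ and $\varepsilon$ are invertible if and only if all of their images are. This reduces the claim to the classical fact in $\Cat_{(\infty,1)}$ for $q_* \dashv f_* \dashv g_*$, where it is \cite[Lemma 3.3.1]{CarmeliSchlankea2021AmbidexterityHeight}-style, or the standard: the right adjoint is fully faithful if and only if the left-left adjoint is.

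The main obstacle is the bookkeeping in identifying the mate of $\varepsilon$ with $\eta'$. A cleaner route may be to use \cref{lem:lali}\cref{lali2}: $\varepsilon$ is invertible if and only if $fg$ is an equivalence. Since $fq$ is left adjoint to $fg$, $fg$ is an equivalence if and only if $fq$ is, because adjoints of equivalences are equivalences (inverses). Dually, $fq$ is an equivalence if and only if $\eta'$ is invertible, again by \cref{lem:lali} applied in $\sX^{\mathrm{co}}$. This avoids mates entirely, and I would adopt it as the actual proof.
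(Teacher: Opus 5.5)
Your adopted argument is correct, but it takes a different route from the paper. You work through the composite adjunction $fq \dashv fg$:
\begin{itemize}
\item by \cref{lem:lali} in $\sX$, $g$ is fully faithful iff $fg$ is an equivalence;
\item by \cref{lem:lali} applied to $f \dashv q$ in $\sX^{\mathrm{co}}$, $q$ is fully faithful iff $fq$ is an equivalence;
\item since adjoints are unique and an equivalence is adjoint to its inverse, $fq$ is an equivalence iff $fg$ is.
\end{itemize}

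The paper instead applies \cref{lem:lali} to $\sX^{\op}$ and $\sX^{\coop}$. There, $f \dashv g$ becomes an adjunction with right adjoint $f$ and counit $\varepsilon$, and $q \dashv f$ becomes an adjunction with right adjoint $f$ and counit $\eta'$. Hence both conditions are equivalent to $f$ being fully faithful in $\sX^{\op}$, i.e.\ to precomposition with $f$ being fully faithful on all mapping $(\infty,1)$-categories out of $x$.

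The paper's route gives an intrinsic criterion on the middle morphism that mentions neither adjoint; this is the same viewpoint used in \cref{lem:lali-dk-loc}. The cost is tracking how adjunctions, units and counits transform under $(-)^{\op}$ and $(-)^{\coop}$. Your route needs only the $\sX^{\mathrm{co}}$ dual. It characterizes both conditions as ``$fg$ (equivalently $fq$) is an equivalence''. The cost is relying on the nontrivial (b)$\Rightarrow$(a) direction of \cref{lem:lali} and on composition and uniqueness of adjoints; it is the direct analogue of the classical adjoint-triple argument.

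Your mate sketch would also work: the mate of $\eta'$ under $fq \dashv fg$ is $\varepsilon$, by the triangle identity for $q \dashv f$, and this correspondence preserves invertibility. The equivalence-based version is cleaner, though.
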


\begin{proof}
	By applying \cref{lem:lali} to $\sX^\op$ and $\sX^\coop$, it follows that both conditions are equivalent to $f$ being fully faithful in $\sX^\op$.
\end{proof}

\begin{lemma}\label{BCeasy}
	Let $\sX$ be an $(\infty,2)$-category and
	\[\begin{tikzcd}
		x & {x'} \\
		y & {y'}
		\arrow["f",from=1-1, to=1-2]
		\arrow["i"', from=1-1, to=2-1]
		\arrow["{j}", from=1-2, to=2-2]
		\arrow["g"',from=2-1, to=2-2]
	\end{tikzcd}\]
	a commutative square in $\sX$.
	Suppose $i$ and $j$ admit reflective left adjoints $k$ and $l$.
	Then the Beck--Chevalley transformation
	\[\begin{tikzcd}
		x & {x'} \\
		y & {y'}
		\arrow["f", from=1-1, to=1-2]
		\arrow[shorten <= 6, shorten >= 6, Rightarrow, from=2-2, to=1-1]
		\arrow["k", from=2-1, to=1-1]
		\arrow["g"', from=2-1, to=2-2]
		\arrow["l"', from=2-2, to=1-2]
	\end{tikzcd}\]
	is invertible if and only if there exists some homotopy $lg \simeq fk$ making this square commute.
\end{lemma}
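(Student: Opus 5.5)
One direction is immediate: if the Beck--Chevalley transformation $\beta \colon lg \Rightarrow fk$ is invertible, then $\beta$ itself is a homotopy making the square commute. The work is in the converse. So suppose we are given some equivalence $\phi \colon lg \simeq fk$, and recall how $\beta$ is built. Write $\varepsilon^k \colon ki \Rightarrow \id_x$ and $\varepsilon^l \colon lj \Rightarrow \id_{x'}$ for the counits, which are invertible by reflectivity, and $\eta^k \colon \id_y \Rightarrow ik$, $\eta^l$ for the units. Then $\beta$ is the composite
\[ lg \xRightarrow{lg\eta^k} lgik \simeq ljfk \xRightarrow{\varepsilon^l fk} fk, \]
where the middle equivalence comes from the commutativity $gi \simeq jf$ of the original square.

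The plan is to show that $\beta$ is obtained from $\phi$ by composing with invertible 2-morphisms, hence is itself invertible. The key observation is that $lg\eta^k$ is invertible once we know it becomes invertible after identifying $lg$ with $fk$. Under $\phi$, the whiskering $lg\eta^k$ corresponds to $fk\eta^k$, and $k\eta^k$ is invertible: the triangle identity gives $\varepsilon^k k \circ k\eta^k = \id_k$, and $\varepsilon^k k$ is invertible. This is the same mechanism as in \cref{easyadjun}.

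By naturality of whiskering (interchange), the square
\[ lg \xRightarrow{lg\eta^k} lgik, \qquad fk \xRightarrow{fk\eta^k} fkik, \]
with vertical maps $\phi$ and $\phi ik$, commutes. Hence $lg\eta^k$ is invertible, and the second map $\varepsilon^l fk$ is invertible by reflectivity of $l \dashv j$. The middle equivalence is invertible by construction. So $\beta$ is a composite of three invertible 2-morphisms, and therefore invertible.

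The only subtle point is the naturality square. It is a standard interchange-law statement in the homotopy $(2,2)$-category, so I expect no real obstacle. One must just be careful that the argument never uses any compatibility between $\phi$ and the given square; it only needs $\phi$ to be some invertible 2-morphism. All of these checks can be carried out in the homotopy $(2,2)$-category of $\sX$.
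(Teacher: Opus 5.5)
Your proposal is correct and follows the paper's proof essentially verbatim. Both write the Beck--Chevalley map as unit, then the equivalence $gi \simeq jf$, then the invertible counit. Both transport invertibility of $lg\eta$ along the given homotopy to invertibility of $fk\eta$, and conclude because $k\eta$ is invertible by the triangle identities. You additionally state the trivial direction and the interchange square explicitly, which the paper leaves implicit.
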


\begin{proof}
	The Beck--Chevalley transformation is the composite
	\[lg \xRightarrow{\eta} lgik \simeq ljfk \xRightarrow{\epsilon} fk.\]
	Since $\varepsilon$ is invertible by assumption, it suffices to show that $lg \whisk \eta \colon lg \Rightarrow lgik$ is invertible.
	Because of the homotopy $lg \simeq fk$, this holds if $fk \whisk\eta \colon fk \Rightarrow fkik$ is invertible.
	Since the 2-morphism $k \whisk \eta \colon k \Rightarrow kik$ is invertible by the triangle identities, the result follows.
\end{proof}

\begin{proposition}\label{lem:lift-along-lali}
	Let
	\[\begin{tikzcd}[ampersand replacement=\&]
		x \& y \& z
		\arrow["g"', shift right=1, hook, from=1-1, to=1-2]
		\arrow["f"', shift right=1, from=1-2, to=1-1]
		\arrow["h", from=1-2, to=1-3]
	\end{tikzcd}\]
	be 1-morphisms in an $(\infty,2)$-category $\sX$ such that $f$ is a reflective left adjoint to $g$.
	Suppose furthermore $h \whisk \eta \colon h \Rightarrow hgf$ is invertible, where $\eta$ is the unit of $f \dashv g$.
	Then $h$ admits a right adjoint if and only if $hg$ admits a right adjoint.
\end{proposition}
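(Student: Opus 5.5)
Throughout, write $\eta \colon \id_y \Rightarrow gf$ for the unit and $\varepsilon \colon fg \Rightarrow \id_x$ for the counit of $f \dashv g$; the counit is invertible by reflectivity. The idea is that $h$ factors, up to equivalence, through the reflective localization: the invertible 2-morphism $h\whisk\eta$ exhibits $h \simeq (hg)f$. Adjoints then transfer along the adjunction $f \dashv g$ in both directions.

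\emph{($\Rightarrow$).} Suppose $h \dashv r$ with $r \colon z \to y$. Since $g \dashv$ has left adjoint $f$, we have $f \dashv g$. Composing adjunctions gives $hg \dashv fr$. So $hg$ has a right adjoint, and this direction does not even use the hypothesis on $h \whisk \eta$.

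\emph{($\Leftarrow$).} Suppose $hg \dashv s$ with $s \colon z \to x$. Composing with $f \dashv g$ gives $(hg)f \dashv gs$. By hypothesis $h\whisk\eta \colon h \Rightarrow hgf$ is invertible. Adjunctions are invariant under replacing the left adjoint by an equivalent 1-morphism: transport the unit and counit along the invertible 2-morphism, and check the triangle identities in the homotopy $(2,2)$-category as noted after \cref{def:adj}. Hence $h \dashv gs$.

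The only point needing care is the last transport step. Concretely, the new unit is
\[\id_y \Rightarrow gs\,hgf \xRightarrow{\ gs\whisk(h\whisk\eta)^{-1}\ } gs\,h,\]
and the new counit is
\[h\,gs \xRightarrow{\ (h\whisk\eta)\whisk gs\ } hgf\,gs \Rightarrow \id_z.\]
The triangle identities follow from those for $hgf \dashv gs$ by naturality (interchange) of whiskering. This is routine, so I expect no real obstacle. The proof is essentially formal once one observes that $h \simeq hgf$.
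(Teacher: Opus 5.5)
Your ($\Leftarrow$) direction is correct and is exactly the paper's argument. The ($\Rightarrow$) direction, however, has a genuine error. You cannot compose $f \dashv g$ with $h \dashv r$ to obtain $hg \dashv fr$. In $hg$ the factor $g$ is the \emph{right} adjoint of $f \dashv g$, and composing adjunctions to get $hg \dashv fr$ would need $g \dashv f$. Your remark that this direction does not use the hypothesis on $h \whisk \eta$ is also false. Take posets $x = \{*\}$ and $y = \{0 < 1\}$, let $g(*) = 1$, let $f$ be the unique map, and let $h = \id_y$. Then $f \dashv g$ is reflective and $h$ has a right adjoint. But $hg = g$ has no right adjoint, since that would force $1 \leq 0$. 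Here $h \whisk \eta = \eta$ is not invertible, because $\eta_0$ is $0 \leq 1$. So the hypothesis is needed in this direction too. The paper handles it by passing to the homotopy $(2,2)$-category and citing \cite[Lemma 2.7]{ArkorLibertiea2024AdjointFunctorTheorems}.

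The missing idea is that the hypothesis forces $\eta \whisk r \colon r \Rightarrow gfr$ to be invertible, i.e.\ $r$ factors through $g$. Write $\eta', \varepsilon'$ for the unit and counit of $h \dashv r$, and set
\[\psi \colon gfr \xRightarrow{\eta' \whisk gfr} rhgfr \xRightarrow{r \whisk (h \whisk \eta)^{-1} \whisk r} rhr \xRightarrow{r \whisk \varepsilon'} r.\]
Interchange and a triangle identity for $h \dashv r$ give $\psi \circ (\eta \whisk r) = \id_r$. Next, $\eta \whisk gf = gf \whisk \eta$, since both are inverse to the invertible $g \whisk \varepsilon \whisk f$. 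Combined with naturality of $\eta$, this gives $(\eta \whisk r) \circ \psi = gf \whisk (\psi \circ (\eta \whisk r)) = \id_{gfr}$. Only now does $hg \dashv fr$ follow. Take the unit $\id_x \simeq fg \xRightarrow{f \whisk \eta' \whisk g} frhg$ and the counit $hgfr \xRightarrow{h \whisk (\eta \whisk r)^{-1}} hr \xRightarrow{\varepsilon'} \id_z$. The triangle identities follow from those of $f \dashv g$ and $h \dashv r$ plus naturality of $\eta$ and $\eta'$. So your formula $fr$ for the right adjoint is correct, but only because of the hypothesis on $h \whisk \eta$, and your justification for it fails.
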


\begin{proof}
	For the ``if'' direction, suppose that $k$ is right adjoint to $hg$.
	Then $gk$ is right adjoint to $hgf \simeq h$.
	Since the triangle identities can be checked in the homotopy $(2,2)$-category of $\sX$, the converse follows from \cite[Lemma 2.7]{ArkorLibertiea2024AdjointFunctorTheorems}.
\end{proof}

\section{Lax-idempotent algebras in a monoidal \texorpdfstring{$(\infty,2)$}{(∞,2)}-category}\label{sec:laxi-algebras}

We will now define lax-idempotent algebras in the $(\infty,2)$-categorical setting.
Lax-idempotent monads on an $(\infty,2)$-category $\sX$ will be defined as lax-idempotent algebras in $(\Fun(\sX,\sX),\circ)$ in \cref{sec:limonad}.

The following two definitions are essentially the same as for $(\infty,1)$-categories, but we include them for completeness' sake.

\begin{definition}[Monoidal $(\infty,2)$-categories]\label{def:monoidal-2cat}
	A \emph{monoidal $(\infty,2)$-category} is a functor $\sX_\bullet \colon \Delta^\op \to \Cat_{(\infty,2)}$ such that for every $n \geq 0$, the Segal map
	\[\sX_n \to \sX_1 \times \cdots \times \sX_1\]
	is an equivalence of $(\infty,2)$-categories.
\end{definition}

We will generally write $\sX$ for $\sX_1$.
The composite $\sX \times \sX \eqarrowback \sX_2 \to \sX_1$ will be denoted $- \otimes -$, and we will write $\unit$ for the object $* \eqarrowback \sX_0 \to \sX_1$.
Given a monoidal $(\infty,2)$-category $\sX$, we will write $\sX^\otimes \to \Delta^\op$ for the corresponding cocartesian fibration under the 2-categorical straightening equivalence \cite{Nuiten2024SuperStraight} (see also \cite[Proposition A.2.9]{BlansBlom2024ChainRuleGoodwillie} for a simplified straightening equivalence if the base is an $(\infty,1)$-category).

\begin{definition}[Algebras in a monoidal $(\infty,2)$-category]\label{def:2alg}
	Let $\sX$ be a monoidal $(\infty,2)$-category.
	Then an \emph{algebra} in $\sX$ is defined as a section of $\sX^\otimes \to \Delta^\op$ that takes inert morphisms to cocartesian morphisms.
	We write $\Alg(\sX)$ for the full sub-$(\infty,2)$-category of
	\[\Fun_{/\Delta^\op}(\Delta^\op,\sX^\otimes) = \Fun(\Delta^\op,\sX^\otimes) \times_{\Fun(\Delta^\op,\Delta^\op)} \{\mathrm{id}\}\]
	spanned by the algebras.
\end{definition}

Given an algebra $A \colon \Delta^\op \to \sX^\otimes$, we abusively write $A$ for its value on $[1]$.
Since it preserves cocartesian lifts of inerts, it takes $[n]$ to $(A,\ldots,A) \in \sX^{\times n} \simeq \sX^\otimes_n$.
In particular, the map $d_1 \colon [1] \to [2]$ in $\Delta$ induces the multiplication map $\mu \colon A \otimes A \to A$, while $s_0 \colon [1] \to [0]$ induces the unit map $u \colon \unit \to A$.

\begin{definition}[Lax-idempotent algebras]\label{Laxialg}
	Let $A$ be an algebra in a monoidal $(\infty,2)$-category $\sX$.
	Then $A$ is called \emph{lax-idempotent} if the multiplication map $\mu \colon A \otimes A \to A$ is left adjoint to $u \otimes \id_A \colon A \to A \otimes A$.
\end{definition}

\begin{lemma}
	Let $A$ be a lax-idempotent algebra. Then $\mu \colon A \otimes A \to A$ is a reflective localization.
\end{lemma}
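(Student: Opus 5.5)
We need to show that the counit of the adjunction $\mu \dashv u \otimes \id_A$ is an equivalence. The plan is to use \cref{lem:lali}: by the equivalence of \cref{lali1} and \cref{lali2}, it suffices to show that the composite $\mu \circ (u \otimes \id_A) \colon A \to A$ is an equivalence. Here the left adjoint is $f = \mu \colon A \otimes A \to A$ and the right adjoint is $g = u \otimes \id_A \colon A \to A \otimes A$, so the relevant composite is $fg \colon A \to A$.

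First, identify $\mu \circ (u \otimes \id_A)$ with the identity using the unitality coming from the algebra structure. In $\Delta$ the composite of the relevant maps realizing $u \otimes \id_A$ (a degeneracy in the first slot, together with inert maps) followed by $d_1 \colon [1] \to [2]$ is the identity of $[1]$. Since $A$ is a section of $\sX^\otimes \to \Delta^\op$, functoriality gives a homotopy $\mu \circ (u \otimes \id_A) \simeq \id_A$. Concretely, one passes through the cocartesian lift over the relevant map $[2] \to [1]$ in $\Delta^\op$ to express $u \otimes \id_A$ via the equivalence $\sX_2^\otimes \simeq \sX \times \sX$. This is the usual left unit law, valid for algebras in a monoidal $(\infty,1)$-category, and it only uses the underlying $(\infty,1)$-categorical data. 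Since $\id_A$ is an equivalence, \cref{lem:lali} then shows that the counit $\mu(u \otimes \id_A) \Rightarrow \id_A$ is invertible, so the adjunction is reflective.

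The main (mild) obstacle is bookkeeping: one must make sure the 1-morphism $u \otimes \id_A$ used in \cref{Laxialg} is the one induced by the simplicial map whose composite with $d_1$ is the identity. We also need to check that the comparison of $\sX^\otimes_2$ with $\sX \times \sX$ is compatible with this identification. Everything else is formal. Note that we do not need to know that the counit of the given adjunction is the unit homotopy itself: \cref{lem:lali} upgrades the mere existence of an equivalence $fg \simeq \id$ to invertibility of the counit.
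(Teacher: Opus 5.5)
Your proposal is correct and is the same argument as the paper's: the unit law of the algebra gives $\mu \circ (u \otimes \id_A) \simeq \id_A$, and then the equivalence of \cref{lali1} and \cref{lali2} in \cref{lem:lali} shows that the counit of $\mu \dashv u \otimes \id_A$ is invertible. The paper simply cites the unit law from the definition of an algebra, without unwinding the simplicial bookkeeping; you are right that this bookkeeping is routine.
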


\begin{proof}
	The definition of an algebra yields an equivalence $\mu \circ (u \otimes \id_A) \simeq \id_A$, hence this follows from \cref{lem:lali}.
\end{proof}

\begin{remark}
	By \cref{easyadjun}, one may assume without loss of generality that the equivalence $\mu \circ (u \otimes \id_A) \simeq \id_A$ coming from the algebra structure on $A$ is the counit of the adjunction $\mu \dashv (u \otimes \id_A)$.
\end{remark}

By reversing the directions of the 1- or 2-morphisms, we obtain three more variants of \cref{Laxialg}.
We define a \emph{coalgebra} in $\sX$ to be an algebra in $\sX^\op$.

\begin{definition}[(Op)lax-idempotent (co)algebras]\label{LaxialgVar}
Let $A$ be an algebra in a monoidal $(\infty,2)$-category $\sX$.
Then $A$ is called \emph{oplax-idempotent} if the multiplication $\mu$ is right adjoint to the unit $u \otimes \id_A$.
A coalgebra $Q$ in $\sX$ is called \emph{lax-idempotent} (resp.\ \emph{oplax-idempotent}) if the comultiplication $\delta \colon Q \to Q \otimes Q$ is left (resp.\ right) adjoint to $c \otimes \id_Q \colon Q \otimes Q \to Q$, where $c$ denotes the counit of the coalgebra.
\end{definition}

For brevity, in the rest of the paper we will mainly state our results for lax-idempotent algebras.
By reversing the 1- or 2-morphisms, one obtains analogous results for (op)lax-idempotent (co)algebras.

Let us conclude this section by proving the following alternative characterizations of lax-idempotent algebras.

\begin{proposition}\label{altAlg}
	Let $A$ be an algebra in a monoidal $(\infty,2)$-category $\sX$. Then the following are equivalent:
	\begin{enumerate}[(1)]
		\item\label{altAlg1} $A$ is lax-idempotent.
		\item\label{altAlg2} The multiplication $\mu \colon A \otimes A \to A$ is right adjoint to $\id_A \otimes u$.
		\item\label{altAlg3} There exists a 2-morphism $d \colon \id_A \otimes u \Rightarrow u\otimes \id_A$ such that $d \whisk u$ and $\mu \whisk d$ are invertible.
	\end{enumerate}
\end{proposition}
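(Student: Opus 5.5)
The plan is to prove \cref{altAlg1}$\Leftrightarrow$\cref{altAlg3} directly, and then deduce \cref{altAlg2}$\Leftrightarrow$\cref{altAlg3} by symmetry. Consider the monoidal $(\infty,2)$-category $\sX^{\mathrm{co},\mathrm{rev}}$, obtained from $\sX$ by reversing the 2-morphisms and the order of the tensor product. The algebra $A$ is still an algebra there. Condition \cref{altAlg1} for $\sX^{\mathrm{co},\mathrm{rev}}$ says that $\mu$ is left adjoint to $u \otimes^{\mathrm{rev}} \id_A = \id_A \otimes u$ in $\sX^{\mathrm{co}}$. That is, $\mu$ is right adjoint to $\id_A\otimes u$ in $\sX$, which is \cref{altAlg2}. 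Condition \cref{altAlg3} for $\sX^{\mathrm{co},\mathrm{rev}}$ asks for a 2-morphism $u\otimes \id_A \Rightarrow \id_A \otimes u$ in $\sX^{\mathrm{co}}$. This is again a 2-morphism $\id_A\otimes u \Rightarrow u\otimes\id_A$ in $\sX$ with the same invertibility conditions. So \cref{altAlg3} is self-dual, and only \cref{altAlg1}$\Leftrightarrow$\cref{altAlg3} needs proof.

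\textbf{Proof of \cref{altAlg1}$\Rightarrow$\cref{altAlg3}.} By the remark after \cref{Laxialg}, we may take the counit of $\mu \dashv u\otimes\id_A$ to be the algebra equivalence $\mu(u\otimes \id_A)\simeq \id_A$. Let $\nu\colon \id_{A\otimes A}\Rightarrow (u\otimes\id_A)\mu$ be the unit. Define $d$ as the composite
\[\id_A\otimes u \xRightarrow{\nu\whisk(\id_A\otimes u)} (u\otimes \id_A)\mu(\id_A\otimes u)\simeq u\otimes\id_A,\]
using the right unit law $\mu(\id_A\otimes u)\simeq\id_A$.
\begin{itemize}
\item Since the counit is invertible, the triangle identities show that $\mu\whisk\nu$ and $\nu\whisk(u\otimes\id_A)$ are invertible.
\item Hence $\mu\whisk d$, which is $\mu\whisk\nu$ whiskered by $\id_A\otimes u$, is invertible.
\item For $d\whisk u$, use the interchange equivalences $(\id_A\otimes u)u\simeq u\otimes u\simeq (u\otimes\id_A)u$. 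These identify $d\whisk u$ with $\nu\whisk(u\otimes\id_A)\whisk u$, up to equivalences, so it is invertible.
\end{itemize}

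\textbf{Proof of \cref{altAlg3}$\Rightarrow$\cref{altAlg1}.} We apply \cref{easyadjun} to $f=\mu$ and $g=u\otimes\id_A$, together with the equivalence $\mu(u\otimes\id_A)\simeq\id_A$. It suffices to produce $\nu\colon\id_{A\otimes A}\Rightarrow(u\otimes\id_A)\mu$ such that $\mu\whisk\nu$ and $\nu\whisk(u\otimes\id_A)$ are invertible. Take
\[\nu\colon \id_{A\otimes A}\simeq(\id_A\otimes\mu)(\id_A\otimes u\otimes\id_A)\xRightarrow{(\id_A\otimes\mu)\whisk(d\otimes\id_A)}(\id_A\otimes\mu)(u\otimes\id_A\otimes\id_A)\simeq u\otimes\mu=(u\otimes\id_A)\mu.\]
\begin{itemize}
\item Associativity $\mu(\id_A\otimes\mu)\simeq\mu(\mu\otimes\id_A)$ identifies $\mu\whisk\nu$ with $\mu\whisk((\mu\whisk d)\otimes\id_A)$. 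This is invertible because $\mu\whisk d$ is.
\item Similarly, $\nu\whisk(u\otimes\id_A)$ is identified with $(\id_A\otimes\mu)\whisk((d\whisk u)\otimes\id_A)$. This is invertible because $d\whisk u$ is.
\end{itemize}

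The main obstacle is bookkeeping rather than a conceptual step. We must check that the various whiskerings agree, up to the structural equivalences of the monoidal structure (associativity, unit laws, interchange), with the whiskerings claimed above. The key point is that invertibility of a 2-morphism is unaffected by composing with equivalences. Moreover, all the identities involved live in the homotopy $(2,2)$-category, where the tensor product is a pseudofunctor and the interchange law holds, so no higher coherence data is needed. I would organise the verification by working in that homotopy $(2,2)$-category throughout.
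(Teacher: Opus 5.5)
Your proposal is correct and follows essentially the same route as the paper. You use the same 2-morphism $d$, built from the unit $\nu$ and the right unit law. You feed the same $\nu \colon \id_{A\otimes A} \Rightarrow (u\otimes\id_A)\mu$, built from $d\otimes\id_A$ followed by $\id_A\otimes\mu$, into \cref{easyadjun}, and you reduce (2)$\Leftrightarrow$(3) to (1)$\Leftrightarrow$(3) by passing to $\sX^{\mathrm{co}}$ with the reversed tensor product.
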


\begin{proof}
	To see that \cref{altAlg1} implies \cref{altAlg3}, let $d$ be the composite
	\begin{equation}\label{eq:modification-d}
	\id_A \otimes u \xRightarrow{\eta \whisk (\id_A \otimes u)} (u \otimes \id_A) \circ \mu \circ (\id_A \otimes u) \xRightarrow{(u \otimes \id_A) \whisk \sigma} u \otimes \id_A,
	\end{equation}
	where $\sigma$ is the equivalence $\mu \circ (\id_A \otimes u) \simeq \id_A$ coming from the algebra structure of $A$.
	To see that $d \whisk u$ is invertible, it suffices to show that the left 2-morphism in \cref{eq:modification-d} becomes invertible when whiskering with $u$.
	This holds since
	\[\eta \whisk ((\id_A \otimes u) \circ u) \simeq (\eta \whisk (u \otimes \id_A)) \whisk u\]
	and $\eta \whisk (u \otimes \id_A)$ is invertible by the triangle identities.
	It also follows from the triangle identities that $\mu \whisk \eta$, and hence $\mu \whisk d$, are invertible.

	Conversely, if \cref{altAlg3} holds then we may form the 2-morphism $\eta \colon \id_{A \otimes A} \Rightarrow (u \otimes \id_A) \circ \mu$ as the composite
	\[\id_{A \otimes A} \simeq (\id_A \otimes \mu) \circ (\id_A \otimes u \otimes \id_A) \xRightarrow{d} (\id_A \otimes \mu) \circ (u \otimes \id_A \otimes \id_A) \simeq  (u \otimes \id_A) \circ \mu.\]
	It follows similarly as above that $\eta \whisk (u \otimes \id_A)$ and $\mu \whisk \eta$ are invertible, hence $A$ is lax-idempotent by \cref{easyadjun}.

	The equivalence between \cref{altAlg2} and \cref{altAlg3} can be proved by an analogous argument, or by applying the equivalence \cref{altAlg1} $\Leftrightarrow$ \cref{altAlg3} to $\cX^\mathrm{co}$ with the reversed monoidal structure.
\end{proof}

\begin{remark}
	In \cite{companion}, we will study lax-idempotent algebras in more detail.
	In particular, we will show analogues of two classical facts about idempotent algebras:
	\begin{enumerate}
		\item The full sub-$(\infty,2)$-category $\Alg^{\mathrm{li}}(\sX) \subset \Alg(\sX)$ spanned by lax-idempotent algebras is a poset.
		More generally, if $A$ and $B$ are algebras in $\sX$ such that $A$ is lax-idempotent, then $\twomap_{\Alg(\sX)}(A,B)$ is either contractible or empty.
		\item A lax-idempotent algebra is fully determined by its unit map $\unit \to A$.
		More precisely, the forgetful functor $\Alg^\mathrm{li}(\sX) \to \sX_{\unit/}$ is a locally fully faithful inclusion.
	\end{enumerate}
\end{remark}

\begin{remark}\label{Rigid-algebras}
	Another type of algebras where the multiplication $\mu \colon A \otimes A \to A$ is assumed to admit a right adjoint are \emph{(locally) rigid algebras} \cite[Chapter 1, \S 9]{GaitsgoryRozenblyum2017StudyDerivedAlgebraica}, \cite{Ramzi2024DualizablePresentableinftycategories}, \cite{Neuhauser2026RigidAlgebrasCospans}: locally rigid algebras are algebras in a monoidal $(\infty,2)$-category $\sX$ such that
	\begin{itemize}
		\item $\mu \colon A \otimes A \to A$ is left adjoint as an $(A,A)$-bimodule morphism, and
		\item $A$ is dualizable in $\sX$.
	\end{itemize}
	This definition is quite orthogonal to that of a lax-idempotent algebras:
	namely, if $A$ is lax-idempotent and satisfies the first property stated above, then it is in fact idempotent, meaning that $\mu \colon A \otimes A \to A$ is an equivalence.
	To see this, suppose $A$ is lax-idempotent and that $\mu$ admits a right adjoint in $\mathrm{Bimod}_{(A,A)}(\sX)$.
	Since $u \otimes \id \colon A \to A \otimes A$ is right adjoint to $\mu$ in $\sX$, it follows that $u \otimes \id$ is a left $A$-module morphism.
	This implies that the square
	\[\begin{tikzcd}[ampersand replacement=\&, column sep = 5em]
		{A \otimes A} \& {A \otimes A \otimes A} \\
		A \& {A \otimes A}
		\arrow["{\id_A \otimes (u \otimes \id_A)}", from=1-1, to=1-2]
		\arrow["\mu"', from=1-1, to=2-1]
		\arrow["{\mu \otimes \id}", from=1-2, to=2-2]
		\arrow["{u \otimes \id_A}"', from=2-1, to=2-2]
	\end{tikzcd}\]
	commutes.
	Since $(\mu \otimes \id_A) \circ (\id_A \otimes u \otimes \id_{A}) \simeq \id_{A \otimes A}$, it follows that $u \otimes \id_A$ is left inverse to $\mu$.
	On the other hand, $u \otimes \id_A$ is right inverse to $\mu$ by definition of an algebra, hence $\mu$ must be invertible.
\end{remark}

\section{Monads on \texorpdfstring{$(\infty,2)$}{(∞,2)}-categories}\label{sec:heine}

The $(\infty,1)$-category $\Cat_{(\infty,2)}$ of $(\infty,2)$-categories is cartesian closed by \cite[Corollary 4.9]{Nuiten2024SuperStraight} or \cite{Rezk2010Cartesian}, with internal hom $\Fun(-,-)$.
Given an $(\infty,2)$-category $\sX$, Lurie's theory of endomorphism objects \cite[\S 4.7.1]{HA} endows the $(\infty,2)$-category $\End(\sX) = (\Fun(\sX,\sX),\circ)$ with a monoidal structure together with a left action $\Fun(\sX,\sX) \curvearrowright \sX$ given by composition.

\begin{definition}[Monads]
	Let $\sX$ be an $(\infty,2)$-category. A \emph{monad} on $\sX$ is an algebra object in the monoidal $(\infty,2)$-category $\Fun(\sX,\sX)$.
	The $(\infty,2)$-category $\Alg(\Fun(\sX,\sX))$ of monads on $\sX$ will be denoted $\Monad(\sX)$.
\end{definition}

We will write $(-)^{\leq 1}$ for the functor sending an $(\infty,2)$-category to its underlying $(\infty,1)$-category by forgetting all non-invertible 2-morphisms.

\begin{construction}[The $(\infty,2)$-category of algebras over a monad]\label{const:2talg}
	Given a monad $T$ on an $(\infty,2)$-category $\sX$, we obtain a monad $T^{\leq 1}$ on the $(\infty,1)$-category $\sX^{\leq 1}$.
	Taking $\cV$ to be $\Cat_{(\infty,1)}$ in \cite[Corollary 5.41.(4)]{Heine2026DualityMonadsMonadic}, we obtain a $\Cat_{(\infty,1)}$-enrichment on $\Alg_{T^{\leq 1}}(\sX^{\leq 1}) \coloneqq \LMod_{T^{\leq 1}}(\sX^{\leq 1})$.
	We define the \emph{$(\infty,2)$-category of $T$-algebras} $\Alg_T(\sX)$ to be the corresponding $(\infty,2)$-category under the equivalence between $\Cat_{(\infty,1)}$-enriched $(\infty,1)$-categories and $(\infty,2)$-categories \cite[Corollary 7.21]{Haugseng2015RectificationEnrichedCategories}.
\end{construction}

The following now holds by construction.

\begin{lemma}
	The underlying $(\infty,1)$-category of $\Alg_T(\sX)$ is $\Alg_{T^{\leq 1}}(\sX^{\leq 1})$. \qed
\end{lemma}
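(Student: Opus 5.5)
The statement should follow by unwinding \cref{const:2talg}; there is no real computation involved. Write $\cA$ for the $\Cat_{(\infty,1)}$-enriched $(\infty,1)$-category provided by \cite[Corollary 5.41.(4)]{Heine2026DualityMonadsMonadic}. Its underlying $(\infty,1)$-category is $\LMod_{T^{\leq 1}}(\sX^{\leq 1})$, and its enriched mapping objects are $(\infty,1)$-categories. By definition, $\Alg_T(\sX)$ is the $(\infty,2)$-category corresponding to $\cA$ under Haugseng's equivalence \cite[Corollary 7.21]{Haugseng2015RectificationEnrichedCategories}. The claim therefore reduces to two compatibilities:
\begin{enumerate}[(i)]
	\item Heine's enrichment really is an enrichment of the given $(\infty,1)$-category $\LMod_{T^{\leq 1}}(\sX^{\leq 1})$.
	\item Under Haugseng's equivalence, the functor $(-)^{\leq 1}$ on $(\infty,2)$-categories corresponds to taking the underlying $(\infty,1)$-category of a $\Cat_{(\infty,1)}$-enriched category.
\end{enumerate}

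For (i), I would quote the precise form of Heine's result. It constructs an enrichment on the $(\infty,1)$-category of left modules, meaning a $\Cat_{(\infty,1)}$-enriched category together with an identification of its underlying $(\infty,1)$-category with $\LMod_{T^{\leq 1}}(\sX^{\leq 1})$. This is part of the statement of \cite[Corollary 5.41]{Heine2026DualityMonadsMonadic}.

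For (ii), note that the underlying $(\infty,1)$-category of an enriched category is obtained by applying the lax monoidal (in fact product-preserving) functor $(-)^\simeq \colon \Cat_{(\infty,1)} \to \Spc$ to the mapping objects. On the $(\infty,2)$-categorical side, forgetting non-invertible 2-morphisms replaces each mapping $(\infty,1)$-category $\twomap(x,y)$ by its core $\twomap(x,y)^\simeq$. Haugseng's equivalence is natural in the enriching monoidal $(\infty,1)$-category with respect to lax monoidal functors, here $(-)^\simeq$. Combined with the identification of $\Spc$-enriched categories with $(\infty,1)$-categories, this naturality gives the claimed compatibility.

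I expect the main obstacle to be (ii), that is, pinning down this naturality statement correctly. Some care is needed about which model of $(\infty,2)$-categories the functor $(-)^{\leq 1}$ is defined in. One also has to check that it agrees with base change along $(-)^\simeq$ through the comparison of \cite{Haugseng2015RectificationEnrichedCategories}. The cleanest route I see is to characterize both functors as right adjoints to the inclusion of $(\infty,1)$-categories into $(\infty,2)$-categories, respectively of $\Spc$-enriched into $\Cat_{(\infty,1)}$-enriched categories. Haugseng's equivalence is compatible with these inclusions, so the right adjoints agree as well.
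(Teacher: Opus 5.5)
Your proposal is correct and takes the same approach as the paper, which records the lemma as holding ``by construction'': Heine's Corollary 5.41 enriches the given $(\infty,1)$-category $\LMod_{T^{\leq 1}}(\sX^{\leq 1})$, and Haugseng's equivalence is compatible with passing to underlying $(\infty,1)$-categories. Your point (ii), identifying both constructions as right adjoints to the compatible inclusions of $(\infty,1)$-categories, spells out a standard compatibility that the paper uses without comment.
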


Recall that for an $(\infty,1)$-category $\cC$, there is an equivalence between (the opposite of) the $(\infty,1)$-categories of monads on $\cC$ and of monadic right adjoints to $\cC$.
The following extension to $(\infty,2)$-categories follows from \cite[Theorem 4.33]{Heine2026DualityMonadsMonadic}.

\begin{proposition}\label{prop:monadic-Heine-adj-2}
	Let $\sX$ be an $(\infty,2)$-category.
	Then there is a reflective adjunction
	\[\begin{tikzcd}[ampersand replacement=\&]
		{\mathrm{E} : (\Cat_{(\infty,2)}^{\leq 1})_{/\sX}^\mathrm{R}} \&\& {\Monad(\sX)^{\leq 1,\op} : \Alg_{(-)}(\sX)}
		\arrow[""{name=0, anchor=center, inner sep=0}, shift left=2, from=1-1, to=1-3]
		\arrow[""{name=1, anchor=center, inner sep=0}, shift left=2, hook', from=1-3, to=1-1]
		\arrow["\dashv"{anchor=center, rotate=-90}, draw=none, from=0, to=1]
	\end{tikzcd}\]
	of $(\infty,1)$-categories, where $(\Cat_{(\infty,2)}^{\leq 1})_{/\sX}^\mathrm{R}$ denotes the full sub-$(\infty,1)$-category of $(\Cat_{(\infty,2)}^{\leq 1})_{/\sX}$ spanned by the right adjoints.
	Here $\Alg_{(-)}(\sX)$ sends a monad $T$ to the $(\infty,2)$-category $\Alg_T(\sX)$ defined in \cref{const:2talg}, and $\mathrm{E}$ sends a right adjoint to its associated monad.
\end{proposition}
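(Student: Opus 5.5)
The plan is to deduce the statement from the $(\infty,1)$-categorical monad–adjunction correspondence in \cite[Theorem 4.33]{Heine2026DualityMonadsMonadic}, taking $\cV = \Cat_{(\infty,1)}$, and then to transport it to $(\infty,2)$-categories using the equivalence between $\Cat_{(\infty,1)}$-enriched $(\infty,1)$-categories and $(\infty,2)$-categories \cite[Corollary 7.21]{Haugseng2015RectificationEnrichedCategories}.

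First, regard $\sX$ as a $\Cat_{(\infty,1)}$-enriched $(\infty,1)$-category. Right adjoints $\sY \to \sX$ in $\Cat_{(\infty,2)}$ then correspond exactly to enriched right adjoints. Likewise, monads on $\sX$ correspond to enriched monads, because $\Fun(\sX,\sX)$ is the enriched endomorphism object. Heine's theorem provides an adjunction between enriched right adjoints into $\sX$ and the opposite of enriched monads on $\sX$. Its right adjoint sends $T$ to the enriched category of $T$-algebras; by \cref{const:2talg} this is precisely $\Alg_T(\sX)$. Its left adjoint sends $U$ to the endomorphism monad $UF$.

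Second, check reflectivity, meaning that the counit $\mathrm{E}(\Alg_T(\sX)) \to T$ is an equivalence. By \cref{lem:lali}, it suffices to check this. The forgetful functor $\Alg_T(\sX) \to \sX$ has the free-algebra left adjoint. The monad it induces has underlying endofunctor $T$, and its multiplication agrees with that of $T$. This agreement can be tested on underlying $(\infty,1)$-categories, where it is the classical statement \cite[\S 4.7.3]{HA}: the monad associated to $\LMod_T \to \sX^{\leq 1}$ is $T$. Since equivalences of monads are detected by their underlying algebras in $\Fun(\sX,\sX)^{\leq 1}$, it is enough to know the counit is invertible after forgetting 2-morphisms.

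The main obstacle is bookkeeping rather than mathematics. One must make sure that the $\Cat_{(\infty,1)}$-enriched statement of Heine matches the model-independent notions used here: the slice $(\Cat_{(\infty,2)}^{\leq 1})_{/\sX}^{\mathrm R}$ and $\Monad(\sX)^{\leq 1,\op}$. In practice this means verifying that the rectification equivalence is compatible with taking endomorphism objects and with the formation of adjunctions.
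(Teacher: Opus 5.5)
Your tool is the same as the paper's, but you attribute to \cite[Theorem 4.33]{Heine2026DualityMonadsMonadic} a form it does not have. That hides the two steps that actually carry the proof. The specialization $\cV = \Cat_{(\infty,1)}$ belongs to Heine's Corollary 5.41(4), which the paper uses only to \emph{define} $\Alg_T(\sX)$ in \cref{const:2talg}. Theorem 4.33 is applied instead to the $(\infty,2)$-category $\Cat_{(\infty,2)}$ itself, once one knows it admits Eilenberg--Moore objects. It then gives a reflective adjunction
\[\mathrm{End} \colon \Fun([1],\Cat_{(\infty,2)}^{\leq 1})^{\mathrm{R}} \rightleftarrows \mathrm{Mon}(\Cat_{(\infty,2)})^{\op} \colon \Alg\]
whose right adjoint takes Eilenberg--Moore objects. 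So the key input is that $\Alg_T(\sX)$ is an Eilenberg--Moore object for $T$; the paper gets this from Heine's Corollaries 5.39 and 5.41. Your sentence ``by Construction this is precisely $\Alg_T(\sX)$'' assumes exactly this point.

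With that input, reflectivity is part of the conclusion of Theorem 4.33, so your separate counit check is superfluous. As written it is also incomplete: you would still need that the counit of the $(\infty,2)$-categorical adjunction restricts on $\sX^{\leq 1}$ to the $(\infty,1)$-categorical counit of \cite[\S 4.7.3]{HA}. Working directly in $\Cat_{(\infty,2)}$ also removes the need to transport the whole adjunction across Haugseng's rectification, which you list as the main obstacle.

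The second missing step is passing from this global adjunction to the one in the statement. Both sides of the adjunction above live over $\Cat_{(\infty,2)}^{\leq 1}$: the left via $\ev_1$, the right via the underlying object of a monad. Their fibers over $\sX$ are $(\Cat_{(\infty,2)}^{\leq 1})^{\mathrm{R}}_{/\sX}$ and $\Monad(\sX)^{\leq 1,\op}$. An adjunction between $(\infty,1)$-categories over a common base does not in general restrict to an adjunction on fibers. One needs a relative adjunction in the sense of \cite[Definition 7.3.2.2]{HA}: both functors must commute with the projections, and the unit and counit must project to equivalences in the base. The paper verifies this by inspecting the proof of Theorem 4.33, and only then takes fibers over $\sX$. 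Your proposal has no counterpart to this step, because you read Heine's theorem as already being a statement about a fixed $\sX$.
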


\begin{proof}
	It follows from Corollaries 5.39 and 5.41 of \cite{Heine2026DualityMonadsMonadic} that, for any $(\infty,2)$-category $\sX$ and any monad $T$ on $\sX$, the $(\infty,2)$-category $\Alg_T(\sX)$ from \cref{const:2talg} is an Eilenberg--Moore object.
	By \cite[Theorem 4.33]{Heine2026DualityMonadsMonadic}, we obtain a reflective adjunction
	\[\begin{tikzcd}[ampersand replacement=\&]
		{\mathrm{End} : \Fun([1],\Cat_{(\infty,2)}^{\leq 1})^\mathrm{R}} \&\& {\mathrm{Mon}(\Cat_{(\infty,2)})^{\op} : \Alg}
		\arrow[""{name=0, anchor=center, inner sep=0}, shift left=2, from=1-1, to=1-3]
		\arrow[""{name=1, anchor=center, inner sep=0}, shift left=2, hook', from=1-3, to=1-1]
		\arrow["\dashv"{anchor=center, rotate=-90}, draw=none, from=0, to=1]
	\end{tikzcd}\]
	of $(\infty,1)$-categories.
	Here $\mathrm{Mon}(\Cat_{(\infty,2)})$ denotes the $(\infty,1)$-category of monads defined in \cite[Definition 4.26]{Heine2026DualityMonadsMonadic}; observe that $\mathrm{Mon}_{\sX}(\Cat_{(\infty,2)})$ in the sense of \cite[Notation 4.27]{Heine2026DualityMonadsMonadic} is equivalent to $\Monad(\sX)^{\leq 1}$ in our notation.
	An inspection of the proof of \cite[Theorem 4.33]{Heine2026DualityMonadsMonadic} shows that this adjunction is fibered over $\Cat_{(\infty,2)}^{\leq 1}$ in the sense of \cite[Definition 7.3.2.2]{HA}.
	The adjunction from the statement is now obtained by taking fibers over $\sX \in \Cat_{(\infty,2)}$.
\end{proof}

Let us briefly unwind what the left adjoint $\mathrm{E}$ does to morphisms.
Let
\[\begin{tikzcd}[sep = {3em,between origins}]
	\sY && \sZ \\
	& \sX
	\arrow["F", from=1-1, to=1-3]
	\arrow["{U_1}"', from=1-1, to=2-2]
	\arrow["{U_2}", from=1-3, to=2-2]
\end{tikzcd}\]
be a commutative triangle in $\Cat_{(\infty,2)}$, where $U_1$ and $U_2$ admit left adjoints $T_1$ and $T_2$.
Unwinding the proof of \cite[Theorem 4.33]{Heine2026DualityMonadsMonadic} shows that the natural transformation $\mathrm{E}(F) \colon \mathrm{E}(U_2) = U_2T_2 \Rightarrow U_1 T_1 = \mathrm{E}(U_1)$
is given by the pasting
\[\begin{tikzcd}[sep = {3em,between origins}]
	& \sX & \\
	\sY && \sZ \\
	& \sX. &
	\arrow["{T_1}"', from=1-2, to=2-1]
	\arrow[""{name=0, anchor=center, inner sep=0}, "{T_2}", from=1-2, to=2-3]
	\arrow["F"', from=2-1, to=2-3]
	\arrow["{U_1}"', from=2-1, to=3-2]
	\arrow["{U_2}", from=2-3, to=3-2]
	\arrow[shorten <= 6, shorten >= 12, Rightarrow, from=0, to=2-1]
\end{tikzcd}\]
where the top triangle is the mate of the bottom.
More precisely, $\mathrm{E}(F)$
is the composite
\begin{equation}\label{eq:EF}
U_2T_2 \xRightarrow{\eta_1} U_2T_2U_1T_1 \simeq U_2T_2U_2FT_1 \xRightarrow{\varepsilon_2} U_2FT_1 \simeq U_1T_1.
\end{equation}

Let us write
\[(\Cat_{(\infty,2)}^{\leq 1})_{/\sX}^{\mathrm{R},T} \coloneqq (\Cat_{(\infty,2)}^{\leq 1})_{/\sX}^{\mathrm{R}} \times_{\Monad(\sX)^{\leq 1, \op}} \{T\}\]
for the $(\infty,1)$-category of right adjoints $\sY \to \sX$ whose associated monad is $T$.
It follows directly from \cref{prop:monadic-Heine-adj-2} that $\Alg_T(\sX) \to \sX$ is terminal in this $(\infty,1)$-category.

\begin{corollary}\label{algT-terminal}
	Let $\sX$ be an $(\infty,2)$-category and let $T$ be a monad on $\sX$.
	Then the forgetful functor $\Alg_T(\sX) \to \sX$ is the terminal object in $(\Cat_{(\infty,2)}^{\leq 1})_{/\sX}^{\mathrm{R},T}$. \qed
\end{corollary}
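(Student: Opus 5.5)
The claim follows formally from the reflective adjunction of \cref{prop:monadic-Heine-adj-2}, so I would simply unwind the statement. First I identify the fiber. The $(\infty,1)$-category $(\Cat_{(\infty,2)}^{\leq 1})_{/\sX}^{\mathrm{R},T}$ is the fiber of $\mathrm{E}$ over $T \in \Monad(\sX)^{\leq 1,\op}$. Its objects are pairs $(U \colon \sY \to \sX, \phi \colon \mathrm{E}(U) \simeq T)$, and its mapping spaces are fibers of the induced maps on mapping spaces. For two such objects $U_1, U_2$, the space of maps from $U_1$ to $U_2$ in the fiber is the fiber of
\[\Map_{/\sX}(U_1,U_2) \to \Map_{\Monad(\sX)^{\leq 1,\op}}(\mathrm{E}U_1,\mathrm{E}U_2)\]
over the point $\phi_2^{-1}\phi_1$, which corresponds to the identity of $T$.

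Next I use the adjunction. Let $U_2 = \Alg_T(\sX) \to \sX$. The counit $\mathrm{E}\Alg_{(-)}(\sX) \Rightarrow \id$ is an equivalence because the adjunction is reflective, so $\Alg_T(\sX)$ together with the counit component $\mathrm{E}(\Alg_T(\sX)) \simeq T$ is an object of the fiber. For any $U_1$ in the fiber, the adjunction equivalence factors as
\[\Map_{/\sX}(U_1, \Alg_T(\sX)) \simeq \Map(\mathrm{E}U_1, T).\]
This equivalence is the composite of the map induced by $\mathrm{E}$ with postcomposition by the counit, which is an equivalence. The map on mapping spaces induced by $\mathrm{E}$ is therefore itself an equivalence, so its fiber over any point, in particular the one determined by $\phi_1$, is contractible. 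Hence every object of the fiber maps uniquely to $\Alg_T(\sX)$, which is the required terminality.

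The only point that needs care is checking that the structure map $\mathrm{E}(\Alg_T(\sX)) \simeq T$ appearing here is the canonical one, i.e.\ that the underlying monad of the forgetful functor is $T$. This is exactly the statement that the counit of a reflective adjunction is invertible, so no real obstacle arises.
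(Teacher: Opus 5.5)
Your argument is correct and takes the paper's approach. The paper simply records that the corollary follows directly from the reflective adjunction of \cref{prop:monadic-Heine-adj-2}. Your two steps make this precise: because the counit is invertible, $\mathrm{E}$ induces equivalences $\Map(U_1,\Alg_T(\sX)) \to \Map(\mathrm{E}U_1,\mathrm{E}\Alg_T(\sX))$, and the mapping spaces in the fiber are the fibers of these maps, hence contractible.
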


In particular, for any adjunction $F : \sX \rightleftarrows \sY : U$ between $(\infty,2)$-categories, we obtain a canonical comparison functor $\sY \to \Alg_{UF}(\sX)$.

\begin{definition}[Monadic adjunctions]
	Let $F : \sX \rightleftarrows \sY : U$ be an adjunction between $(\infty,2)$-categories.
	We say that such an adjunction is \emph{monadic}, or that $U$ is \emph{monadic}, if the canonical functor $\sY \to \Alg_{UF}(\sX)$ is an equivalence of $(\infty,2)$-categories.
\end{definition}

Note that this agrees with Heine's definition of a monadic right adjoint \cite[Definition 3.26]{Heine2026DualityMonadsMonadic}.

\section{Lax-idempotent monads}\label{sec:limonad}

We now specialize the statements from \cref{sec:laxi-algebras} to the case of endofunctors on an $(\infty,2)$-category.
Note that \cref{deff,def:laxi-adjunctions} can be stated in an arbitrary $(\infty,3)$-category, not just $\Cat_{(\infty,2)}$.

\begin{definition}[Lax-idempotent monads]\label{deff}
	Let $\sX$ be an $(\infty,2)$-category.
	An \emph{(op)lax-idempotent (co)monad} $T$ on $\sX$ is an (op)lax-idempotent (co)algebra in $\Fun(\sX,\sX)$.
\end{definition}

\begin{remark}
	The definitions of lax-idempotent comonads and oplax-idempotent comonads are sometimes swapped in the classical $(2,2)$-categorical literature.
	Our convention agrees with the one of \cite{LackShulman2012Enhanced2categoriesLimits}, but it is opposite to the convention of \cite{NLABlimonad}.
\end{remark}

For brevity, we focus on the case of lax-idempotent monads.
By appropriately dualizing, one obtains analogous statements for (op)lax-idempotent (co)monads.
Whether a monad is lax-idempotent can be checked pointwise.

\begin{lemma}\label{lem:li-monad-ptwise}
	Let $T$ be a monad on an $(\infty,2)$-category $\sX$.
	Then $T$ is lax-idempotent if and only if for every object $x$ in $\sX$, the multiplication map $\mu_x \colon TTx \to Tx$ is left adjoint to the unit map $\eta_{Tx} \colon Tx \to TTx$.
\end{lemma}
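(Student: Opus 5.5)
By \cref{deff}, $T$ is lax-idempotent exactly when $\mu \colon TT \Rightarrow T$ is left adjoint to $u \circ \id_T = \eta T \colon T \Rightarrow TT$ in the $(\infty,2)$-category $\Fun(\sX,\sX)$. Evaluating at an object $x$ gives a 2-functor $\ev_x \colon \Fun(\sX,\sX) \to \sX$. It sends $\mu$ to $\mu_x$ and $\eta T$ to $\eta_{Tx}$. A 2-functor preserves adjunctions, so the ``only if'' direction is immediate.

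For the converse I would use \cref{easyadjun}, which avoids constructing a unit by hand. The algebra structure supplies a counit equivalence $\sigma \colon \mu \circ \eta T \simeq \id_T$. It therefore suffices to produce a 2-morphism $\nu \colon \id_{TT} \Rightarrow \eta T \circ \mu$ in $\Fun(\sX,\sX)$ such that $\mu \whisk \nu$ and $\nu \whisk \eta T$ are invertible. Invertibility of a modification between natural transformations can be checked pointwise: $\Fun(\sX,\sX)$ is the internal hom, and a natural transformation is an equivalence iff its components are. So once $\nu$ exists, these two conditions reduce to the corresponding conditions at each $x$.

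The real issue is constructing $\nu$ globally, since the pointwise units $\id_{TTx} \Rightarrow \eta_{Tx}\mu_x$ need not assemble coherently a priori. I would use criterion \cref{altAlg3} of \cref{altAlg}. This asks for a 2-morphism $d \colon T\eta \Rightarrow \eta T$ of endofunctors of $T$ (here $\id_A \otimes u = T\eta$ and $u \otimes \id_A = \eta T$). It must satisfy that $d \whisk \eta \colon T\eta \circ \eta \Rightarrow \eta T \circ \eta$ and $\mu \whisk d$ are invertible. Each such invertibility can again be tested pointwise.

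To produce $d$, I would work with the mate. Consider $\id_T \xRightarrow{\eta T}$ and postcompose: define $d$ as the composite $T\eta \Rightarrow \eta T\circ\mu\circ T\eta \simeq \eta T$, where the first map must be a unit-type transformation. This is circular, so instead I would exploit the 2-naturality of $\eta$. For a 1-morphism $f=\mu_x$ with pointwise adjoint $\eta_{Tx}$, the naturality square of $\eta$ at $\mu_x$, namely $\eta_{Tx}\circ\mu_x$ versus $T\mu_x\circ\eta_{TTx}$, together with the pointwise unit, gives components $d_x \colon T\eta_x \Rightarrow \eta_{Tx}$. The lax naturality of these components would then follow from naturality of $\eta$ with respect to 2-morphisms.

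Honestly, I expect the cleanest route is to avoid this coherence problem altogether. Adjointness of a 1-morphism $\mu$ in $\Fun(\sX,\sX)$ can be detected pointwise, provided the candidate right adjoint is given and the pointwise mates are compatible. This is the $(\infty,2)$-analogue of the statement that a natural transformation has a right adjoint in $\Fun(\cC,\cD)$ iff each component does and the resulting lax transformation is strict; see e.g.\ \cite{Haugseng2021LaxTransformationsAdjunctions}. The right adjoint is then a priori only a lax transformation $T \Rightarrow TT$, and the pointwise right adjoints are the $\eta_{Tx}$. But the strict transformation $\eta T$ is already such a right adjoint pointwise. Hence the lax structure maps are mates of invertible naturality squares of $\mu$, and these mates coincide with the naturality 2-cells of $\eta T$, which are invertible. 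Hence the lax transformation is strict and equals $\eta T$, giving $\mu \dashv \eta T$.

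The main obstacle is the step just described: citing or verifying the lemma that a componentwise left adjoint transformation has a right adjoint in $\Fun(\sX,\sX)$ whenever the mate 2-cells are invertible. This is where I would spend the effort, via Haugseng's description of lax transformations and adjunctions.
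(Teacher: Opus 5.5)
Your final route is essentially the paper's. The paper also invokes the criterion that a componentwise adjoint transformation with invertible mate squares is an adjoint in $\Fun(\sX,\sX)$ (\cite[Corollary 4.2.12]{AbellanGagnaea2025StraighteningLaxTransformations}), but applies it to the right adjoint $\eta T$, with mates invertible by \cref{BCeasy} and naturality of $\mu$, and then shows the resulting left adjoint $\overline{\mu}$ agrees with $\mu$. Your step ``the lax transformation equals $\eta T$'' should likewise be made rigorous by a comparison modification checked to be invertible pointwise, for instance $\eta T \Rrightarrow R\circ\mu\circ\eta T \simeq R$ built from the unit of $\mu \dashv R$. Matching components and structure 2-cells alone does not give a coherent equivalence $R \simeq \eta T$, and your earlier attempts to assemble $\nu$ or $d$ from pointwise data can simply be dropped.
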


\begin{proof}
	The ``only if'' direction is clear.
	For the converse, note that it follows from \cite[Corollary 4.2.12]{AbellanGagnaea2025StraighteningLaxTransformations} that $\eta T \colon T \to TT$ is a right adjoint in $\Fun(\sX,\sX)$ if for any $f \colon x \to y$ in $\sX$, the mate square
	\[\begin{tikzcd}[ampersand replacement=\&]
		TTx \& TTy \\
		Tx \& Ty
		\arrow["TTf", from=1-1, to=1-2]
		\arrow["{\mu_x}"', from=1-1, to=2-1]
		\arrow[between={0.2}{0.8}, Rightarrow, from=1-2, to=2-1]
		\arrow["{\mu_y}", from=1-2, to=2-2]
		\arrow["Tf"', from=2-1, to=2-2]
	\end{tikzcd}\]
	commutes.
	By \cref{BCeasy}, this is the case precisely if there exists some homotopy making the square commute, which follows from the naturality of $\mu$.
	We thus obtain a left adjoint $\overline{\mu}\colon TT \to T$ from \cite[Corollary 4.2.12]{AbellanGagnaea2025StraighteningLaxTransformations}, and what remains is to show that $\overline{\mu} \simeq \mu$. Whiskering the unit $\id \Rrightarrow \eta T \circ \overline{\mu}$ with
	$\mu$ one obtains a morphism $\mu \Rrightarrow \overline{\mu}$ which we claim is an equivalence.
	To see this, note that this can be verified for each $x \in \sX$ separately, where it follows directly from our hypothesis.
\end{proof}

We similarly obtain the following analogue of \cref{altAlg}.

\begin{proposition}\label{alt}
	Let $T$ be a monad on an $(\infty,2)$-category $\sX$. Then the following are equivalent:
	\begin{enumerate}[(1)]
		\item\label{alt1} $T$ is lax-idempotent.
		\item\label{alt2} For any $x$ in $\sX$, the multiplication $\mu_x \colon TTx \to Tx$ is right adjoint to $T(\eta_x)$.
		\item\label{alt3} There exists a modification $d \colon T\eta \Rrightarrow \eta T$ such that for every $x$ in $\sX$, the 2-morphisms $d_x \whisk \eta_x \colon T(\eta_x) \eta_x \Rightarrow \eta_{Tx} \eta_x$ and $\mu_x \whisk d_x \colon \mu_x T(\eta_x) \Rightarrow \mu_x \eta_{Tx}$ are invertible in $\sX$. \qed
	\end{enumerate}
\end{proposition}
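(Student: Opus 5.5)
The plan is to deduce \cref{alt} from \cref{altAlg}, applied to the monoidal $(\infty,2)$-category $\Fun(\sX,\sX)$ with the algebra $T$, and then to pass from natural transformations to their components using \cref{lem:li-monad-ptwise} and its dual variants. In $\Fun(\sX,\sX)$ the unit of $T$ is $u=\eta\colon \id\Rightarrow T$, and composition is the tensor product. Under the paper's conventions, $u\otimes\id_T$ is $\eta T$ and $\id_T\otimes u$ is $T\eta$. \Cref{altAlg} then says that $T$ is lax-idempotent if and only if $\mu\dashv \eta T$, if and only if $T\eta\dashv\mu$ in $\Fun(\sX,\sX)$, if and only if there is a modification $d\colon T\eta\Rrightarrow \eta T$ with $d\whisk\eta$ and $\mu\whisk d$ invertible.

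\emph{Step 1: \cref{alt1} $\Leftrightarrow$ \cref{alt3}.} Condition \cref{altAlg3} in $\Fun(\sX,\sX)$ asks for $d$ such that the modifications $d\whisk \eta$ and $\mu\whisk d$ are invertible. A modification between transformations in $\Fun(\sX,\sX)$ is invertible precisely when its components are invertible 2-morphisms in $\sX$. The components of $d\whisk\eta$ and $\mu\whisk d$ at $x$ are $d_x\whisk\eta_x$ and $\mu_x\whisk d_x$. This gives the equivalence directly.

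\emph{Step 2: \cref{alt1} $\Leftrightarrow$ \cref{alt2}.} By \cref{altAlg}, \cref{alt1} is equivalent to $T\eta\dashv\mu$ in $\Fun(\sX,\sX)$. Evaluating at $x$ is a 2-functor, so it preserves adjunctions, and we get $T(\eta_x)\dashv\mu_x$; this proves \cref{alt1} $\Rightarrow$ \cref{alt2}.

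For the converse I would mimic the proof of \cref{lem:li-monad-ptwise}. First, \cite[Corollary 4.2.12]{AbellanGagnaea2025StraighteningLaxTransformations}, in the form for left adjoints (equivalently, applied after reversing 2-morphisms), upgrades the pointwise adjunctions $T(\eta_x)\dashv\mu_x$ to a right adjoint $\overline{\mu}$ of $T\eta$. This requires that for each $f\colon x\to y$ the relevant mate square, built from $T(\eta_x)$, $T(\eta_y)$, $\mu_x$, $\mu_y$, be invertible. Since the counit $\mu_x\circ T(\eta_x)\simeq\id$ is an equivalence, these adjunctions are coreflective. The dual of \cref{BCeasy} then says the mate is invertible as soon as some homotopy makes the square commute, and naturality of $\mu$ provides one. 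Finally, one identifies $\overline{\mu}\simeq\mu$ by whiskering the appropriate unit or counit and checking invertibility pointwise, which holds by hypothesis.

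The main obstacle is this last identification, together with confirming that the cited straightening result applies to this dual situation. Everything else is formal bookkeeping that transfers \cref{altAlg} and the proof of \cref{lem:li-monad-ptwise}.
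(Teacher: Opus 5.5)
Your proposal is correct and is essentially the paper's own (only implicitly written) argument: apply \cref{altAlg} in $\Fun(\sX,\sX)$, detect invertibility of modifications pointwise, and upgrade the pointwise adjunctions $T(\eta_x)\dashv\mu_x$ by the same straightening, Beck--Chevalley and comparison argument used in the proof of \cref{lem:li-monad-ptwise}. One small slip: the equivalence $\mu_x T(\eta_x)\simeq\id_{Tx}$ sits in the position of the unit, not the counit, of $T(\eta_x)\dashv\mu_x$; that is exactly why these adjunctions are coreflective and why the co-dual of \cref{BCeasy} applies.
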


One can also define \emph{lax-idempotent adjunctions}, which relate to lax-idempotent monads in the same way that idempotent adjunctions relate to idempotent monads.

\begin{definition}[Lax-idempotent adjunctions]\label{def:laxi-adjunctions}
	An adjunction $F : \sX \rightleftarrows \sY : G$ between $(\infty,2)$-categories is called \emph{lax-idempotent} if the corresponding monad $GF$ on $\sX$ is lax-idempotent.
\end{definition}

\begin{proposition}\label{laxi-adj-conditions}
	Let $F : \sX \rightleftarrows \sY : G$ be an adjunction between $(\infty,2)$-categories with unit $\eta \colon \id_{\sX} \Rightarrow GF$ and counit $\varepsilon \colon FG \Rightarrow \id_{\sY}$. The following are equivalent:
	\begin{enumerate}[(1)]
		\item\label{laxi-adj1} This adjunction is lax-idempotent, i.e.\ the monad $GF$ is lax-idempotent.
		\item\label{laxi-adj2} The comonad $FG$ is lax-idempotent.
		\item\label{laxi-adj3} For any $x$ in $\sX$, one has $F\eta_x \dashv \varepsilon_{Fx}$.
		\item\label{laxi-adj4} For any $y$ in $\sY$, one has $G\varepsilon_y \dashv \eta_{Gy}$.
	\end{enumerate}
\end{proposition}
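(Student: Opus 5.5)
We plan to prove the cycle of equivalences (1)$\Leftrightarrow$(3) and (2)$\Leftrightarrow$(4) directly, and then tie the two halves together by (3)$\Leftrightarrow$(4). For the monad $T = GF$ the unit is $\eta$ and the multiplication is $\mu = G\varepsilon F$. By \cref{lem:li-monad-ptwise}, condition (1) is equivalent to $G\varepsilon_{Fx} \dashv \eta_{GFx}$ for all $x$. Dually, for the comonad $FG$ the counit is $\varepsilon$ and the comultiplication is $F\eta G$. By the comonad version of \cref{lem:li-monad-ptwise}, condition (2) is equivalent to $F\eta_{Gy} \dashv \varepsilon_{FGy}$ for all $y$. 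So (1) is condition (4) restricted to objects of the form $y = Fx$, and (2) is condition (3) restricted to objects of the form $x = Gy$.

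The key step is (3)$\Leftrightarrow$(4). Assuming (3), we want to transport the adjunction $F\eta_x \dashv \varepsilon_{Fx}$ across the adjunction $F \dashv G$. We apply the 2-functor $G$ to get $GF\eta_x \dashv G\varepsilon_{Fx}$. Combined with \cref{alt}\,(2) (the characterization $\mu_x$ right adjoint to $T\eta_x$) applied to the monad $GF$, we read this as condition \cref{alt2}, which gives (1).

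To get the full (4) for an arbitrary $y$ rather than $y = Fx$, we instead use the criterion of \cref{easyadjun}. The composite $G\varepsilon_y \circ \eta_{Gy} \simeq \id_{Gy}$ is a triangle identity, so it suffices to produce a 2-morphism $\nu \colon \id_{GFGy} \Rightarrow \eta_{Gy} \circ G\varepsilon_y$ whose whiskerings with $G\varepsilon_y$ and with $\eta_{Gy}$ are invertible. We build $\nu$ from the unit $\id_{FGFGy} \Rightarrow F\eta_{Gy}\circ\varepsilon_{FGy}$ of the adjunction in (3), applied at $x = Gy$. We apply $G$ to it, then precompose with $\eta_{GFGy}$ and postcompose using naturality of $\eta$ and $\varepsilon$. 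The identities $\eta_{GFGy}\circ\eta_{Gy}\simeq GF\eta_{Gy}\circ\eta_{Gy}$ and $G\varepsilon_y\circ G\varepsilon_{FGy}\simeq G\varepsilon_y\circ GFG\varepsilon_y$ rewrite the resulting composite into the required form.

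We expect the main obstacle to be verifying that the two whiskerings of $\nu$ are invertible. This will reduce to the triangle identities of $F\dashv G$ together with the fact that the whiskerings of the unit from (3) with $F\eta_{Gy}$ and with $\varepsilon_{FGy}$ are invertible; the latter holds because $\varepsilon_{FGy}\circ F\eta_{Gy}\simeq\id$ and \cref{easyadjun} applies. The converse (4)$\Rightarrow$(3) is the same argument in $\Cat_{(\infty,2)}^{\mathrm{co}}$-dual form, i.e.\ obtained by swapping the roles of $F$ and $G$ after passing to opposites.

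With (3)$\Leftrightarrow$(4) in hand, (1) follows from (4) by restriction, and (2) follows from (3) by restriction. For the converses (1)$\Rightarrow$(4) and (2)$\Rightarrow$(3), the idea is that an arbitrary $y$ is a retract of $FGy$ via $\varepsilon_y$ and $F\eta_{Gy}$. Lax-idempotency of $GF$ at $x = Gy$ gives $G\varepsilon_{FGy}\dashv\eta_{GFGy}$. Whiskering this adjunction by $G$ applied to the retraction, as in the construction of $d$ in the proof of \cref{altAlg}, then gives (4) at $y$. If this retract argument proves delicate, we would fall back on \cref{alt}\,(3): take the modification $d$ for $GF$, whisker it by $G$ and $F$ to produce the $\nu$ required by \cref{easyadjun}, and again check invertibility via the triangle identities.
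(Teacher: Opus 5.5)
Your overall architecture is sound, and once repaired it produces exactly the paper's construction. However, the key step (3)$\Rightarrow$(4) as written uses a 2-cell that does not exist.

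In the adjunction $F\eta_{Gy}\dashv\varepsilon_{FGy}$ supplied by (3), the left adjoint is $F\eta_{Gy}$. So the unit is $\id_{FGy}\Rightarrow\varepsilon_{FGy}\circ F\eta_{Gy}$, which is invertible since this composite is $\id_{FGy}$. The composite $F\eta_{Gy}\circ\varepsilon_{FGy}$ is instead the \emph{source} of the counit $c\colon F\eta_{Gy}\circ\varepsilon_{FGy}\Rightarrow\id_{FGFGy}$. A 2-cell $\id_{FGFGy}\Rightarrow F\eta_{Gy}\circ\varepsilon_{FGy}$ with invertible whiskerings would witness $\varepsilon_{FGy}\dashv F\eta_{Gy}$, i.e.\ the oplax condition. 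Moreover, running your recipe (apply $G$, precompose with $\eta_{GFGy}$, postcompose with $GFG\varepsilon_y$) on a 2-cell in that direction yields $\eta_{Gy}\circ G\varepsilon_y\Rightarrow\id_{GFGy}$, which is the wrong direction for \cref{easyadjun}.

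The repair is to use $c$. Since $G\varepsilon_{FGy}\circ\eta_{GFGy}\simeq\id$, the whiskering $G(c)\whisk\eta_{GFGy}$ is a 2-cell $d_{Gy}\colon GF\eta_{Gy}\Rightarrow\eta_{GFGy}$. Whisker it with $GFG\varepsilon_y$ and use $GFG\varepsilon_y\circ GF\eta_{Gy}\simeq\id$ and $GFG\varepsilon_y\circ\eta_{GFGy}\simeq\eta_{Gy}\circ G\varepsilon_y$. This gives $\nu\colon\id_{GFGy}\Rightarrow\eta_{Gy}\circ G\varepsilon_y$, which is precisely the paper's $\nu=TG(\varepsilon_y)\whisk d_{Gy}$ with $T=GF$.

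The two identities you list are not what rewrites the composite; they are what the whiskering checks need. They identify $\nu\whisk\eta_{Gy}$ and $G\varepsilon_y\whisk\nu$ with whiskerings of $G(c\whisk F\eta_{Gy})$ and $G(\varepsilon_{FGy}\whisk c)$. These are invertible because the unit of $F\eta_{Gy}\dashv\varepsilon_{FGy}$ is invertible.

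Your converse step needs a similar correction. In general $y$ is not a retract of $FGy$, and $F\eta_{Gy}$ is a map $FGy\to FGFGy$, not $y\to FGy$. What is true is that $Gy$ is a retract of $TGy$ via $\eta_{Gy}$ and $G\varepsilon_y$, so $TGy$ is a retract of $TTGy$ via $T\eta_{Gy}$ and $TG\varepsilon_y$. Whisker the unit of $\mu_{Gy}\dashv\eta_{TGy}$ by these two maps, and use $\mu_{Gy}\circ T\eta_{Gy}\simeq\id$ together with naturality of $\eta$. This gives $\id_{TGy}\Rightarrow\eta_{Gy}\circ G\varepsilon_y$, which is again the paper's $\nu$, with $d$ built as in \cref{altAlg}. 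So your fallback is exactly the paper's proof of (1)$\Rightarrow$(4).

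For the dualization (4)$\Rightarrow$(3), passing to $\sX^{\op}$ and $\sY^{\op}$ alone is not enough. It turns $F\dashv G$ into $G^{\op}\dashv F^{\op}$ but exchanges lax and oplax, so you must reverse both 1- and 2-morphisms.

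With these fixes your argument is the paper's cycle (3)$\Rightarrow$(1)$\Rightarrow$(4)$\Rightarrow$(2)$\Rightarrow$(3) plus some redundant implications. The only genuine difference is that your direct (3)$\Rightarrow$(4) extracts $d_{Gy}$ from the pointwise counit rather than from \cref{alt}.
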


\begin{proof}
	Write $T=GF$ for brevity.
	Item \cref{laxi-adj3} implies $T\eta_x \dashv G \varepsilon_{Fx} = \mu_x$, which implies \cref{laxi-adj1} by \cref{alt}.
	We now prove \cref{laxi-adj4} from \cref{laxi-adj1}.
	Let $d_x \colon T\eta_x \Rightarrow \eta_{Tx}$ be as in item \cref{alt3} of \cref{alt}.
	Observe that $G(\varepsilon_y) \eta_{Gy} \simeq \id_{Gy}$ by the triangle identities.
	Form the composite
	\[\nu \colon \id_{TG y} \simeq TG(\varepsilon_y)T(\eta_{Gy}) \xRightarrow{TG(\varepsilon_y) \whisk d_{Gy}} TG(\varepsilon_y)\eta_{TGy} \simeq \eta_{Gy} G(\varepsilon_y),\]
	where we use the naturality of $\eta$ for the last equivalence.
	It follows directly from \cref{alt} that $\nu \whisk \eta_{Gy}$ is invertible, while $G(\varepsilon_y) \whisk \nu$ is invertible since $G(\varepsilon_y) \whisk (TG(\varepsilon_y) \whisk d_{Gy}) \simeq (G(\varepsilon_y) G(\varepsilon_{FGy})) \whisk d_{Gy}$ and $G(\varepsilon_{FGy}) \colon TT(Gy) \to T(Gy)$ is the monad multiplication map.
	It follows by \cref{easyadjun} that $G(\varepsilon_y) \dashv \eta_{Gy}$.
	The dual arguments now show that \cref{laxi-adj4} implies \cref{laxi-adj2} implies \cref{laxi-adj3}, completing the proof.
\end{proof}

\begin{remark}
	In the setting of ordinary $(2,2)$-categories, many more equivalent definitions of lax-idempotent adjunctions are discussed in \cite{NLABliadj}.
	All these definitions are also equivalent in the setting of $(\infty,2)$-categories, which follows by essentially the same arguments as in \cite{NLABliadj}.
\end{remark}

\section{Algebras over a lax-idempotent monad}\label{sec:talg}

The main result of this paper is an explicit description of $\Alg_T(\sX)$ as a certain locally full sub-$(\infty,2)$-category $\sX^T$ of $\sX$.
Recall that a \emph{pointed endofunctor} is an endofunctor $T$ equipped with a natural transformation $\id \Rightarrow T$.
Any monad $T$ can be viewed as a pointed endofunctor via the unit transformation $\eta \colon \id \Rightarrow T$.

\begin{construction}[The $(\infty,2)$-category $\sX^T$]\label{talg}
	Let $\eta \colon \id_{\sX} \Rightarrow T$ be a pointed endofunctor on an $(\infty,2)$-category $\sX$.
	Define $\sX^T$ to be the following locally full sub-$(\infty,2)$-category of $\sX$:
	\begin{enumerate}[(1)]
		\item Its objects are those $x$ for which $\eta_x \colon x \to Tx$ admits a reflective left adjoint $\alpha_x$; that is, a left adjoint $\alpha_x$ such that $\alpha_x \circ \eta_x$ is an equivalence.
		\item\label{talg:item2} Its 1-morphisms are those $f \colon x \to y$ for which there exists a homotopy making the square
		\[\begin{tikzcd}
			Tx & Ty \\
			x & y
			\arrow["Tf", from=1-1, to=1-2]
			\arrow["{\alpha_x}"', from=1-1, to=2-1]
			\arrow["{\alpha_y}", from=1-2, to=2-2]
			\arrow["f"', from=2-1, to=2-2]
		\end{tikzcd}\]
		commute.
	\end{enumerate}
	Given an object $x$ in $\sX^T$, we will always write $\alpha_x$ for the left adjoint of $\eta_x \colon x \to Tx$.
\end{construction}

\begin{remark}\label{rem:BCeasy-XT}
	By \cref{BCeasy}, the second condition is equivalent to the Beck--Chevalley transformation
	\[\begin{tikzcd}
			Tx & Ty \\
			x & y
			\arrow["Tf", from=1-1, to=1-2]
			\arrow["{\alpha_x}"', from=1-1, to=2-1]
			\arrow[shorten <= 6, shorten >= 6, Rightarrow, from=1-2, to=2-1]
			\arrow["{\alpha_y}", from=1-2, to=2-2]
			\arrow["f"', from=2-1, to=2-2]
		\end{tikzcd}\]
	being invertible.
\end{remark}

\begin{remark}\label{rem:BCeasy-FF}
	By \cref{lem:lali}, the condition on objects is equivalent to $\eta_x$ being a fully faithful right adjoint.
	For many lax-idempotent monads $T$ encountered in nature, all unit morphisms $\eta_x \colon x \to Tx$ are fully faithful.
	In this case an object lies in $\sX^T$ if and only if $\eta_x$ admits a left adjoint.

	For completeness' sake, let us give an example of a lax-idempotent monad where not all unit morphisms $\eta_x \colon x \to Tx$ are fully faithful: consider the constant functor $*$ on $\Cat_{(\infty,1)}$.
	This is clearly a lax-idempotent monad.
	The unit $\eta_\cC \colon \cC \to *$ is right adjoint precisely if $\cC$ admits an initial object, whereas it is a fully faithful right adjoint if and only if $\cC \simeq *$.
\end{remark}

We make the following observation about the left adjoint $\alpha_x$:

\begin{lemma}\label{alphaTalpha}
	Let $T$ be a lax-idempotent monad on an $(\infty,2)$-category $\sX$ and let $x$ be an object of $\sX^T$. Then $\alpha_x T(\alpha_x) \simeq \alpha_x \mu_x$.
\end{lemma}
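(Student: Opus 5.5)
We want $\alpha_x T(\alpha_x) \simeq \alpha_x \mu_x$ as 1-morphisms $TTx \to x$. We argue by uniqueness of adjoints: both sides will be exhibited as left adjoints of the same 1-morphism $TTx \leftarrow x$.

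First, the right-hand side. We have $\mu_x \dashv \eta_{Tx}$ because $T$ is lax-idempotent (\cref{lem:li-monad-ptwise}), and $\alpha_x \dashv \eta_x$ by assumption. Composing adjunctions gives
\[\alpha_x \mu_x \dashv \eta_{Tx}\eta_x.\]

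Second, the left-hand side. Since $T$ is a 2-functor, it preserves adjunctions, so $T(\alpha_x) \dashv T(\eta_x)$. Composing with $\alpha_x \dashv \eta_x$ gives
\[\alpha_x T(\alpha_x) \dashv T(\eta_x)\eta_x.\]

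It remains to identify the two right adjoints, i.e.\ to give an equivalence $T(\eta_x)\eta_x \simeq \eta_{Tx}\eta_x$. This is exactly naturality of $\eta \colon \id \Rightarrow T$ applied to the 1-morphism $\eta_x \colon x \to Tx$. By uniqueness of left adjoints in an $(\infty,2)$-category, which can be checked in the homotopy $(2,2)$-category, we conclude $\alpha_x T(\alpha_x) \simeq \alpha_x \mu_x$. Note that we only need the existence of some equivalence, not a specified coherent one, so uniqueness of adjoints up to equivalence suffices.

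The main potential obstacle is justifying that $T$ preserves the adjunction $\alpha_x \dashv \eta_x$. This holds because a functor of $(\infty,2)$-categories induces a functor on homotopy $(2,2)$-categories, and the triangle identities can be checked there (\cref{def:adj}). Every other step is formal.

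Alternatively, one could use \cref{alt}\cref{alt2}: $T(\eta_x) \dashv \mu_x$, together with the modification $d$. The argument above is more direct, and it requires no appeal to the reflectivity of $\alpha_x$.
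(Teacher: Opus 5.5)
Your proof is correct and follows the paper's argument: both $\alpha_x T(\alpha_x)$ and $\alpha_x \mu_x$ are left adjoint to $T(\eta_x)\eta_x \simeq \eta_{Tx}\eta_x$, where the two composites are identified by naturality of $\eta$, so the two left adjoints agree by uniqueness of adjoints. The only difference is that you spell out two steps the paper leaves implicit, namely composing the adjunctions and the fact that $T$ preserves the adjunction $\alpha_x \dashv \eta_x$.
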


\begin{proof}
	These morphisms are left adjoint to $T(\eta_{x})\eta_x \simeq \eta_{Tx} \eta_x $ by assumption.
\end{proof}

The rest of this section is devoted to proving that $\Alg_T(\sX) \simeq \sX^T$.
We will prove this by constructing an equivalence $\Alg_T(\sX) \to \sX^T$ over $\sX$.
The proof uses the following existence result for left Kan extensions along $\eta_x \colon x \to Tx$:

\begin{lemma}\label{lkan-exist}
	Let $T$ be a lax-idempotent monad on an $(\infty,2)$-category $\sX$, let $x$ and $y$ be objects in $\sX$ and suppose that $y$ lies in $\sX^T$.
	Then the composite
	\[\twomap_{\sX}(x,y) \xrightarrow{T} \twomap_{\sX}(Tx,Ty) \xrightarrow{(\alpha_y)_*} \twomap_{\sX}(Tx,y)\]
	is a fully faithful left adjoint of
	$\eta_x^* \colon \twomap_{\sX}(Tx,y) \to \twomap_{\sX}(x,y)$.
\end{lemma}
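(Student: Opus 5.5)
We must show that $L \coloneqq (\alpha_y)_* \circ T \colon \twomap(x,y) \to \twomap(Tx,y)$, $f \mapsto \alpha_y \circ Tf$, is left adjoint to $\eta_x^*$ and is fully faithful. The plan is to build a unit and counit explicitly, check the triangle identities with \cref{easyadjun}, and read off full faithfulness from \cref{lem:lali}. Since $\eta_x^* L(f) = \alpha_y \circ Tf \circ \eta_x \simeq \alpha_y \eta_y f$ by naturality of $\eta$, and $\alpha_y\eta_y \simeq \id_y$ because $y \in \sX^T$, there is a natural equivalence $\eta_x^* L \simeq \id$.

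By \cref{easyadjun}, applied in the $(\infty,2)$-category $\Cat_{(\infty,1)}$ with the roles of $f$ and $g$ played by $\eta_x^*$ and $L$, the natural equivalence $\eta_x^* L \simeq \id$ can serve as the unit of a coreflective adjunction $L \dashv \eta_x^*$. This is the dual form of the lemma, with the unit invertible. What we need is a natural transformation
\[
\nu \colon L\eta_x^* \Rightarrow \id_{\twomap(Tx,y)}
\]
such that $\nu \whisk L$ and $\eta_x^* \whisk \nu$ are invertible. Once the adjunction exists with invertible unit, \cref{lem:lali}, applied to the coreflective version, gives that $L$ is fully faithful.

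For $g \colon Tx \to y$ we define $\nu_g$ to be the composite
\[
\alpha_y \circ T(g\eta_x) = \alpha_y\, Tg\, T\eta_x \xRightarrow{\alpha_y Tg \whisk d_x} \alpha_y\, Tg\, \eta_{Tx} \simeq \alpha_y \eta_y g \simeq g,
\]
where $d \colon T\eta \Rrightarrow \eta T$ is the modification from \cref{alt}\cref{alt3}. This is natural in $g$ because it is built from whiskerings.

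We then check the two invertibility conditions.
\begin{itemize}
\item For $\eta_x^* \whisk \nu$ we precompose with $\eta_x$. The component becomes $\alpha_y Tg \whisk (d_x \whisk \eta_x)$, and $d_x \whisk \eta_x$ is invertible by \cref{alt}.
\item For $\nu \whisk L$ we take $g = \alpha_y Tf$. Then $Tg = T\alpha_y \circ TTf$, so the relevant 2-morphism is $\alpha_y\, T\alpha_y\, TTf \whisk d_x$. By \cref{alphaTalpha}, $\alpha_y T\alpha_y \simeq \alpha_y \mu_y$, and by naturality of $\mu$ we have $\mu_y\, TTf \simeq Tf\, \mu_x$. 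The 2-morphism therefore becomes $\alpha_y\, Tf \whisk (\mu_x \whisk d_x)$, which is invertible by \cref{alt}.
\end{itemize}

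The main obstacle is this second identification. We have to make sure the equivalence $\alpha_y T\alpha_y \simeq \alpha_y \mu_y$ is compatible with the whiskering, that is, that transporting $d_x$ along these equivalences really yields $\mu_x \whisk d_x$ up to invertible 2-cells. Invertibility is a property, so it is enough to track the 2-cells up to equivalence in the homotopy $(2,2)$-category. If this becomes delicate, a fallback is to check instead that $\alpha_y\, T\alpha_y \whisk d_{Ty}$ is invertible, using that $\alpha_y \mu_y \whisk d_y$ is invertible. Naturality of the modification $d$ with respect to $Tf \colon Tx \to Ty$ and to $\alpha_y$ then transfers this to the required statement.
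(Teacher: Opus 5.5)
Your proposal is correct and matches the paper's proof step for step. It uses the same equivalence $\eta_x^* L \simeq \id$, the same transformation $\nu$ built from $d_x$, the dual of \cref{easyadjun}, and the same reduction of $\nu \whisk L$ to the invertibility of $\mu_x \whisk d_x$ via \cref{alphaTalpha} and naturality of $\mu$. The compatibility worry you raise is harmless, since whiskering $d_x$ by equivalent 1-morphisms changes the 2-cell only by conjugation with invertible 2-cells; in your fallback, the component you need is $d_y$ (via naturality of $d$ in $f$), not $d_{Ty}$.
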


\begin{proof}
	Let $g \colon x \to y$ be given.
	Using that $\alpha_y \eta_y \simeq \id_y$ by \cref{lem:lali}, we have natural equivalences
	\[\alpha_y T(g) \eta_x \simeq \alpha_y \eta_y g \simeq g,\]
	hence we obtain a natural equivalence $\eta_x^* \circ (\alpha_y)_* T \simeq \id$.
	We will now construct a natural transformation $\nu \colon (\alpha_y)_* T \circ \eta^*_x \Rightarrow \id$. Let $d_x \colon T(\eta_{x}) \Rightarrow \eta_{Tx}$ be the 2-morphism from item \cref{alt3} of \cref{alt}. For any $f \colon Tx \to y$, we can form the map
	\[\alpha_y T(f \eta_x) \simeq \alpha_y T(f) T(\eta_x) \xRightarrow{d_x} \alpha_y T(f) \eta_{Tx} \simeq \alpha_y \eta_y f \simeq f.\]
	This map is natural in $f$ and hence defines a transformation $\nu \colon (\alpha_y)_* T \eta^*_x \Rightarrow \id$.

	By (the dual of) \cref{easyadjun}, the result follows if we can show that $\eta_x^* \cdot \nu$ and $\nu \cdot ((\alpha_y)_*T)$ are invertible.
	Since the case of $\eta_x^* \cdot \nu$ is similar but easier, we will only show this for $\nu \cdot ((\alpha_y)_*T)$.
	This comes down to showing that
	\[\alpha_y T(\alpha_yT(g)) \circ T\eta_{x} \xRightarrow{(\alpha_y T(\alpha_yT(g))) \whisk d_{x}} \alpha_y T(\alpha_y T(g)) \circ \eta_{Tx}\]
	is invertible for any $g \colon x \to y$.
	This follows since
	\[\alpha_y T(\alpha_y T(g)) \simeq \alpha_y T(\alpha_y) TT(g) \simeq \alpha_y \mu_y TT(g) \simeq \alpha_y T(g) \mu_{x}\]
	by \cref{alphaTalpha}, and $\mu_{x} \whisk d_{x}$ is invertible by \cref{alt}.
\end{proof}

We use this to show that $T$ defines a left adjoint to the inclusion $\sX^T \hookrightarrow \sX$.

\begin{lemma}\label{step1}
	Let $T$ be a lax-idempotent monad on an $(\infty,2)$-category $\sX$. Then $T \colon \sX \to \sX$ lands in the sub-$(\infty,2)$-category $\sX^T$ and is left adjoint to the inclusion $\sX^T \hookrightarrow \sX$.
\end{lemma}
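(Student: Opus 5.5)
We prove the statement in three steps: $T$ lands in $\sX^T$ on objects, then on 1-morphisms, and finally the unit $\eta$ exhibits $T$ as left adjoint to the inclusion, using \cref{lkan-exist}.

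\emph{Objects.} For $x$ in $\sX$ we take $\alpha_{Tx} \coloneqq \mu_x$. By the lax-idempotence hypothesis (\cref{lem:li-monad-ptwise}), $\mu_x \dashv \eta_{Tx}$. The unit law gives $\mu_x \eta_{Tx} \simeq \id_{Tx}$, so this adjunction is reflective by \cref{lem:lali}. Hence $Tx$ lies in $\sX^T$.

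\emph{1-morphisms.} For $f \colon x \to y$ we must check condition \cref{talg:item2} for $Tf$. The required square is
\[\mu_y \circ TTf \simeq Tf \circ \mu_x,\]
which is exactly naturality of $\mu$. Since $\sX^T$ is a locally full sub-$(\infty,2)$-category, $T$ factors through it as a functor of $(\infty,2)$-categories. The factorization on 2-morphisms is automatic, because the only conditions are on objects and 1-morphisms.

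\emph{Adjunction.} Write $\iota \colon \sX^T \hookrightarrow \sX$. It suffices to show that for every $x$ in $\sX$ and $y$ in $\sX^T$, precomposition
\[\eta_x^* \colon \twomap_{\sX^T}(Tx,y) \to \twomap_{\sX}(x,y)\]
is an equivalence of $(\infty,1)$-categories. This is the pointwise (representable) criterion for an $(\infty,2)$-adjunction with unit $\eta$, and $\eta$ is natural in $x$. To prove it, we first note that $\twomap_{\sX^T}(Tx,y)$ is the full subcategory of $\twomap_\sX(Tx,y)$ spanned by those $h$ satisfying $\alpha_y T(h) \simeq h \mu_x$. By \cref{lkan-exist}, $\eta_x^*$ on the whole of $\twomap_\sX(Tx,y)$ has the fully faithful left adjoint $L = (\alpha_y)_* T$. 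It therefore remains to identify the essential image of $L$ with the subcategory above.

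\emph{Image of $L$ lies in $\sX^T$.} For $g \colon x \to y$, \cref{alphaTalpha} gives
\[\alpha_y T(\alpha_y Tg) \simeq \alpha_y \mu_y TTg \simeq (\alpha_y Tg)\mu_x,\]
so $Lg$ lies in $\sX^T$.

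\emph{Converse.} Suppose $h \colon Tx \to y$ satisfies $\alpha_y T(h) \simeq h\mu_x$. Then
\[L(\eta_x^* h) = \alpha_y T(h) T(\eta_x) \simeq h \mu_x T(\eta_x) \simeq h.\]
Since $L$ is fully faithful, $h$ lies in its essential image. Consequently $\eta_x^*$ restricts to an equivalence $\twomap_{\sX^T}(Tx,y) \simeq \twomap_\sX(x,y)$, with inverse $L$.

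The main obstacle is the coherence step of assembling these pointwise equivalences into a genuine adjunction of $(\infty,2)$-categories rather than a family of equivalences. I would handle it with the standard criterion: a functor admits a left adjoint as soon as each object has a universal arrow, in the enriched/locally representable sense. Here $\eta_x \colon x \to \iota Tx$ is that universal arrow, which could be cited, for example, from the $(\infty,2)$-adjunction literature via the pointwise characterization as in \cite{AbellanGagnaea2025StraighteningLaxTransformations}. The resulting left adjoint then agrees with the factorization of $T$ built above, since both send $x$ to $Tx$ with unit $\eta$.
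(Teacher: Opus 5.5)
Your proposal is correct and follows essentially the same route as the paper. You take $\mu_x$ as $\alpha_{Tx}$, use naturality of $\mu$ for 1-morphisms, and reduce the adjunction via \cref{lkan-exist} to showing that $\twomap_{\sX^T}(Tx,y)$ is the essential image of $(\alpha_y)_*T$, using \cref{alphaTalpha} for one inclusion and $h \simeq \alpha_y T(h\eta_x)$ for the other; your remark on assembling the pointwise equivalences into an $(\infty,2)$-adjunction only spells out what the paper leaves implicit in ``the result follows if this composite is an equivalence.''
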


\begin{proof}
	For any $x$ in $\sX$, the map $\eta_{Tx} \colon Tx \to TTx$ admits a reflective left adjoint $\mu_x \colon TTx \to Tx$ by \cref{lem:li-monad-ptwise}, hence $Tx$ lies in $\sX^T$ for any $x$ in $\sX$.
	Moreover, if $f \colon x \to y$ is a 1-morphism in $\sX$, then $Tf \colon Tx \to Ty$ lies in $\sX^T$ by naturality of $\mu$, hence $T \colon \sX \to \sX$ lands in $\sX^T$.

	The unit $\eta \colon \id_\sX \Rightarrow T$ induces a natural map
	\[\twomap_{\sX^T}(Tx,y) \xhookrightarrow{\iota} \twomap_{\sX}(Tx,y) \xrightarrow{\eta_x^*} \twomap_{\sX}(x,y).\]
	The result follows if this composite is an equivalence.
	By \cref{lkan-exist}, it suffices to show that the essential image of $\iota$ agrees with the essential image of
	\[\twomap_{\sX}(x,y) \xrightarrow{T} \twomap_{\sX}(Tx,Ty) \xrightarrow{\alpha_y \circ -} \twomap_{\sX}(Tx,y);\]
	in other words, we need to show that $f \colon Tx \to y$ lies in $\sX^T$ if and only if $f$ is of the form $\alpha_y \circ T(f')$ for some $f' \colon x \to y$.

	For the ``only if'' direction, take $f' = f \circ \eta_x$.
	For the ``if'' direction, observe that $\alpha_y T(f') \mu_x \simeq \alpha_y \mu_y TT(f') \simeq \alpha_y T(\alpha_y Tf')$ by \cref{alphaTalpha}.
\end{proof}

\begin{lemma}\label{step2}
Let $F : \sX \rightleftarrows \sY : G$ be an adjunction between $(\infty,2)$-categories such that the monad $T \coloneqq GF$ on $\sX$ is lax-idempotent.
Then $G$ lands in the sub-$(\infty,2)$-category $\sX^T$ of $\sX$.
\end{lemma}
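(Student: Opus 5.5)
We must show two things: every object $Gy$ lies in $\sX^T$, and for every $g \colon y \to y'$ in $\sY$ the map $Gg \colon Gy \to Gy'$ lies in $\sX^T$.

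\textbf{Objects.} Fix $y$ in $\sY$. By \cref{laxi-adj-conditions}, item \cref{laxi-adj4}, since $T = GF$ is lax-idempotent we have an adjunction $G(\varepsilon_y) \dashv \eta_{Gy}$. The triangle identity for $F \dashv G$ gives $G(\varepsilon_y)\circ \eta_{Gy} \simeq \id_{Gy}$. So $\alpha_{Gy} \coloneqq G(\varepsilon_y)$ is a left adjoint of $\eta_{Gy}$ with $\alpha_{Gy}\eta_{Gy}$ an equivalence, i.e.\ a reflective left adjoint. Hence $Gy$ is an object of $\sX^T$. Since left adjoints are unique up to equivalence, any choice of $\alpha_{Gy}$ is equivalent to $G(\varepsilon_y)$, and we may use this model below.

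\textbf{1-morphisms.} For $g \colon y \to y'$, condition \cref{talg:item2} of \cref{talg} asks for a homotopy making the square with sides $TGg = GFGg$, $\alpha_{Gy'} = G\varepsilon_{y'}$, $\alpha_{Gy} = G\varepsilon_y$ and $Gg$ commute, i.e.
\[G(\varepsilon_{y'})\circ GFG(g) \simeq G(g)\circ G(\varepsilon_y).\]
This is $G$ applied to the naturality square of the counit $\varepsilon \colon FG \Rightarrow \id_{\sY}$ at $g$, so it commutes. By \cref{rem:BCeasy-XT} this is all that is required, and no further coherence needs to be checked.

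\textbf{Functor.} Since $\sX^T$ is a locally full sub-$(\infty,2)$-category, a functor into $\sX$ whose objects and 1-morphisms land in $\sX^T$ factors uniquely through it. So the two checks above show that $G$ factors through $\sX^T$.

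The only nontrivial input is the adjunction $G(\varepsilon_y) \dashv \eta_{Gy}$, which is supplied by \cref{laxi-adj-conditions}. The one point needing care is identifying the chosen $\alpha_{Gy}$ with $G(\varepsilon_y)$, and uniqueness of adjoints handles this.
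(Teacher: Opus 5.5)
Your proof is correct and follows the paper's argument: you get the object condition from $G(\varepsilon_y) \dashv \eta_{Gy}$ via \cref{laxi-adj-conditions} plus the triangle identity, and the 1-morphism condition by applying $G$ to the naturality square of $\varepsilon$. One small point: the appeal to \cref{rem:BCeasy-XT} is unnecessary, since condition \cref{talg:item2} of \cref{talg} is already literally the existence of such a homotopy.
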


\begin{proof}
	It follows from \cref{laxi-adj-conditions} that $G(\varepsilon_y) \dashv \eta_{Gy}$, hence $Gy$ lies in $\sX^T$ for any $y$ in $\sY$.
	Moreover, for any map $f \colon y \to z$ in $\sY$ we have $G(\varepsilon_z) TG(f) \simeq G(f) G(\varepsilon_y)$ by naturality of $\varepsilon$, hence $Gf$ lies in $\sX^T$.
\end{proof}

We are now ready to establish the equivalence $\Alg_T(\sX) \simeq \sX^T$, proving \cref{thmA}.

\begin{theorem}\label{main-thm-1}
	Let $T$ be a lax-idempotent monad on an $(\infty,2)$-category $\sX$.
	Then there is an equivalence $H \colon \Alg_T(\sX) \eqarrow \sX^T$ such that the triangle
	\[\begin{tikzcd}[column sep={3em,between origins}]
		{\Alg_T(\sX)} && {\sX^T} \\
		& \sX
		\arrow["\sim","H"', from=1-1, to=1-3]
		\arrow["U"', from=1-1, to=2-2]
		\arrow[hook, from=1-3, to=2-2]
	\end{tikzcd}\]
	commutes.
\end{theorem}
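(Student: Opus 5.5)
The strategy is to realize $\sX^T \hookrightarrow \sX$ as a right adjoint whose associated monad is $T$, and then to compare it with $\Alg_T(\sX)$ through the universal property of \cref{algT-terminal}.

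By \cref{step1}, the inclusion $\iota \colon \sX^T \hookrightarrow \sX$ is a right adjoint with left adjoint $T$, and its unit is $\eta$, so the associated monad $\mathrm{E}(\iota)$ has underlying endofunctor $T$. The first step is to check that this agrees with $T$ as a monad, i.e.\ that the multiplication $\iota\varepsilon T$ is $\mu$. I would argue this via the comparison map. The forgetful functor $U \colon \Alg_T(\sX) \to \sX$ is a right adjoint with monad $T$, so \cref{step2} shows it factors through $\sX^T$ as a functor $H$. Since $\sX^T \hookrightarrow \sX$ is a locally full inclusion, such a factorization is unique, and $H$ is a morphism in $(\Cat_{(\infty,2)}^{\leq 1})_{/\sX}^{\mathrm{R}}$ from $U$ to $\iota$. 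Applying $\mathrm{E}$ gives a monad map $\mathrm{E}(H) \colon \mathrm{E}(\iota) \Rightarrow T$, computed by \cref{eq:EF}. Its underlying transformation $T \Rightarrow T$ is built from units and counits of two adjunctions that share the unit $\eta$. I expect it to be the identity, or at least an equivalence, because it is a mate of the identity square. The upshot is that $\mathrm{E}(\iota) \simeq T$ as monads.

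Since $\mathrm{E}(\iota) \simeq T$, \cref{algT-terminal} yields a comparison functor $K \colon \sX^T \to \Alg_T(\sX)$ over $\sX$. Now consider the composites $HK$ and $KH$.
\begin{itemize}
\item $HK$ is an endomorphism of $\iota$ over $\sX$. Because $\iota$ is a monomorphism in $\Cat_{(\infty,2)}$ (it is a locally full inclusion), $HK \simeq \id$.
\item $KH$ is an endomorphism of the terminal object $U$ of $(\Cat_{(\infty,2)}^{\leq 1})_{/\sX}^{\mathrm{R},T}$, provided $H$ is a morphism in this fibre. If so, $KH \simeq \id$ by terminality.
\end{itemize}
Together these show that $H$ is an equivalence, and the triangle commutes by construction.

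The main obstacle is making sure that $H$ lives over $T$ in the fibre, i.e.\ that $\mathrm{E}(H)$ is the identity of $T$ as a monad and not merely some endomorphism. I would try to handle this by the reflective adjunction of \cref{prop:monadic-Heine-adj-2}. Since $\mathrm{E}$ restricted to $\Alg$-objects is an equivalence onto $\Monad(\sX)^{\leq 1,\op}$, it suffices to know that $\mathrm{E}(H)$ is an equivalence of monads. That can be checked on underlying endofunctors, where \cref{eq:EF} reduces to the triangle identity for the shared unit $\eta$. With this in hand, $H$ is a map between objects that are, up to the equivalence $\mathrm{E}(H)$, both over $T$. Composing $K$ with the induced automorphism of $\Alg_T(\sX)$ then makes the terminality argument go through.
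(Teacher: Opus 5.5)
Your proposal is correct and follows essentially the same route as the paper. You factor $U$ through $\sX^T$ via \cref{step2}, use \cref{step1} and \cref{eq:EF} to see that the induced monad map $\mathrm{E}(H)$ from the monad of $\sX^T \hookrightarrow \sX$ to $T$ is an equivalence, and then conclude from the terminality of $\Alg_T(\sX)$ in the fibre (\cref{algT-terminal}) together with the fact that $\sX^T \hookrightarrow \sX$ is a monomorphism. Your ``main obstacle'' disappears if you take $\mathrm{E}(H)$ itself as the identification of the monad of $\sX^T \hookrightarrow \sX$ with $T$: then $H$ is tautologically a morphism in the fibre, and no correcting automorphism of $\Alg_T(\sX)$ is needed.
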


\begin{proof}
	By \cref{step2}, the forgetful functor $U$ lands in $\sX^T \subset \sX$, giving us the functor $H \colon \Alg_T(\sX) \to \sX^T$ from the statement.
	To see that it is an equivalence, write $T'$ for the monad associated to the right adjoint $\sX^T \hookrightarrow \sX$.
	It follows from the proof of \cref{step1} that the underlying endofunctor of $T'$ is $T$, and that the unit map of the monad $T'$ agrees with the unit map $\eta \colon \id \Rightarrow T$.
	By \cref{prop:monadic-Heine-adj-2}, we obtain a map of monads $T' \Rightarrow T$, and it follows from the description of this map given in \cref{eq:EF} that it is an equivalence of endofunctors on $\sX$, and hence of monads.
	In particular, we may view $\sX^T \hookrightarrow \sX$ as an object of $(\Cat_{(\infty,2)}^{\leq 1})^{\mathrm{R},T}_{/\sX}$.
	Since $\Alg_T(\sX)$ is the terminal object of this $(\infty,1)$-category by \cref{algT-terminal} and $\sX^T \hookrightarrow \sX$ is a monomorphism, it follows that the functor $\Alg_T(\sX) \to \sX^T$ must be an equivalence of $(\infty,2)$-categories.
\end{proof}

By reversing the 1- and/or 2-morphisms in $\sX$, we obtain analogous results for (op)lax-idempotent (co)monads.
Let us spell these out for completeness.

\begin{theorem}\label{main-thm-2}
	Let $\sX$ be an $(\infty,2)$-category and $T$ an oplax-idempotent monad on $\sX$.
	Then $\Alg_T(\sX) \to \sX$ is a locally full sub-$(\infty,2)$-category inclusion
	\begin{enumerate}
		\item whose objects are those $x$ for which $\eta_x \colon x \to Tx$ admits a coreflective right adjoint $\alpha_x$, and
		\item whose 1-morphisms are those $f \colon x \to y$ for which $f \alpha_x \simeq \alpha_y T(f)$.
	\end{enumerate}
	If $T$ is a lax-idempotent (resp.\ oplax-idempotent) comonad on $\sX$, then $\coAlg_T(\sX) \hookrightarrow \sX$ is a locally full sub-$(\infty,2)$-category inclusion
		\begin{enumerate}
		\item whose objects are those $x$ for which $\varepsilon_x \colon Tx \to x$ admits a fully faithful left (resp.\ right) adjoint $\nu_x$, and
		\item whose 1-morphisms are those $f \colon x \to y$ for which $\nu_y f \simeq T(f) \nu_x$.
	\end{enumerate}
\end{theorem}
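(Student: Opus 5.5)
The plan is to deduce \cref{main-thm-2} from \cref{main-thm-1} by formal duality. Nothing new is computed; we only track which adjunctions and which directions of 2-morphisms are reversed. The three dualizations are $(-)^{\mathrm{co}}$, $(-)^{\op}$ and $(-)^{\coop}$. Each is an involution on $\Cat_{(\infty,2)}$ compatible with $\Fun(-,-)$, so there are identifications of monoidal $(\infty,2)$-categories $\Fun(\sX^{\mathrm{co}},\sX^{\mathrm{co}}) \simeq \Fun(\sX,\sX)^{\mathrm{co}}$ and $\Fun(\sX^{\op},\sX^{\op}) \simeq \Fun(\sX,\sX)^{\op,\mathrm{co}}$. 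The second carries the $\mathrm{co}$ because natural transformations reverse under $\op$.

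For the oplax-idempotent monad case, I would note that a monad $T$ on $\sX$ gives a monad $T^{\mathrm{co}}$ on $\sX^{\mathrm{co}}$ with the same unit and multiplication. Passing to $\mathrm{co}$ swaps left and right adjoints. So $\mu_x \vdash \eta_{Tx}$ in $\sX$ becomes $\mu_x \dashv \eta_{Tx}$ in $\sX^{\mathrm{co}}$, and $T^{\mathrm{co}}$ is lax-idempotent by \cref{lem:li-monad-ptwise}. Next I would check that $\Alg_{T^{\mathrm{co}}}(\sX^{\mathrm{co}}) \simeq \Alg_T(\sX)^{\mathrm{co}}$ over $\sX^{\mathrm{co}}$. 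The cleanest route is via \cref{algT-terminal}. The functor $(-)^{\mathrm{co}}$ preserves adjunctions of $(\infty,2)$-categories and their associated monads, so it identifies $(\Cat_{(\infty,2)}^{\leq 1})^{\mathrm{R},T}_{/\sX}$ with $(\Cat_{(\infty,2)}^{\leq 1})^{\mathrm{R},T^{\mathrm{co}}}_{/\sX^{\mathrm{co}}}$ and hence preserves terminal objects. Applying \cref{main-thm-1} to $T^{\mathrm{co}}$ and then applying $(-)^{\mathrm{co}}$ again gives the description. A reflective left adjoint of $\eta_x$ in $\sX^{\mathrm{co}}$ is a coreflective right adjoint of $\eta_x$ in $\sX$. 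The Beck--Chevalley condition on 1-morphisms is phrased only as the existence of a homotopy $f\alpha_x \simeq \alpha_y T(f)$, which is insensitive to the direction of 2-morphisms. The locally full sub-$(\infty,2)$-category property is also invariant under $\mathrm{co}$.

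For the comonad statements, a comonad $T$ on $\sX$ is a coalgebra in $\Fun(\sX,\sX)$, that is, an algebra in $\Fun(\sX,\sX)^{\op}$. Equivalently, via the identification above, $T^{\op}$ is a monad on $\sX^{\op}$, but with the 2-morphisms reversed once more. The step I expect to need the most care is getting this parity right. For a lax-idempotent comonad, $\delta \dashv \varepsilon T$ in $\Fun(\sX,\sX)$. In $\Fun(\sX^{\op},\sX^{\op}) \simeq \Fun(\sX,\sX)^{\op,\mathrm{co}}$, the reversal of 2-morphisms makes this $\mu = \delta^{\op} \vdash \eta T$. So $T^{\op}$ is an oplax-idempotent monad on $\sX^{\op}$, and $T^{\coop}$ is a lax-idempotent monad on $\sX^{\coop}$. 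Then $\coAlg_T(\sX) \simeq \Alg_{T^{\op}}(\sX^{\op})^{\op}$, again via uniqueness of terminal objects as above. The first part of the theorem applied to $T^{\op}$ yields the following. Objects are those $x$ for which $\varepsilon_x$, regarded in $\sX^{\op}$, has a coreflective right adjoint there. Translating back to $\sX$, this means $\varepsilon_x \colon Tx \to x$ has a left adjoint $\nu_x$ whose unit $\id_x \Rightarrow \varepsilon_x\nu_x$ is invertible. By \cref{lem:lali} this is equivalent to $\nu_x$ being fully faithful. The morphism condition $\nu_y f \simeq T(f)\nu_x$ is read off directly. The oplax-idempotent comonad case is identical, starting instead from \cref{main-thm-1} applied to $T^{\op}$, which is now lax-idempotent; there the adjoint $\nu_x$ is a right adjoint.
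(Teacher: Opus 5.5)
Your strategy is the paper's. The paper gives no argument beyond saying that the statement follows from \cref{main-thm-1} by reversing 1- and/or 2-morphisms, and your final dictionary is correct: $T^{\mathrm{co}}$ is lax-idempotent on $\sX^{\mathrm{co}}$ for an oplax-idempotent monad; $T^{\op}$ is oplax-idempotent on $\sX^{\op}$, equivalently $T^{\coop}$ is lax-idempotent on $\sX^{\coop}$, for a lax-idempotent comonad. The oplax-monad part, the translation of the object condition into a fully faithful $\nu_x$ via \cref{lem:lali}, and the morphism condition are fine as written.

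However, the parity bookkeeping in the comonad paragraph, which you flagged as the delicate step, is wrong as stated. It reaches the right conclusion only because two errors cancel.

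First, $\Fun(\sX^{\op},\sX^{\op}) \simeq \Fun(\sX,\sX)^{\op}$, not $\Fun(\sX,\sX)^{\op,\mathrm{co}}$. The reversal of natural transformations is exactly the $\op$ on $\Fun(\sX,\sX)$, since they are its 1-morphisms. Modifications keep their direction, because $\op$ does not reverse the 2-morphisms of $\sX$. Formally, $\Map(A,\Fun(\sX^{\op},\sX^{\op})) \simeq \Map(A^{\op},\Fun(\sX,\sX))$, whereas for $\mathrm{co}$ you correctly get $A^{\mathrm{co}}$. So a comonad on $\sX$ is literally a monad on $\sX^{\op}$, with no further reversal of 2-morphisms.

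Second, the swap of handedness does not come from reversing 2-morphisms alone. In any $(\infty,2)$-category $\cV$, $f \dashv g$ holds if and only if $g \dashv f$ holds in $\cV^{\op}$, exactly as for $\cV^{\mathrm{co}}$, while $\cV^{\coop}$ preserves handedness. With your identification $\op,\mathrm{co}$ you would therefore get two swaps and conclude that $T^{\op}$ is lax-idempotent. That contradicts what you assert, which is in fact true.

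The correct argument is: $\delta \dashv \varepsilon T$ in $\Fun(\sX,\sX)$ becomes $\eta T = (\varepsilon T)^{\op} \dashv \delta^{\op} = \mu$ in $\Fun(\sX,\sX)^{\op} \simeq \Fun(\sX^{\op},\sX^{\op})$, so $T^{\op}$ is oplax-idempotent. You already use that $\op$ swaps handedness when you translate the object condition back to $\sX$ ("coreflective right adjoint in $\sX^{\op}$" becomes "left adjoint $\nu_x$ in $\sX$"). With this correction the proposal is internally consistent and complete.
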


Let us conclude this section with a converse to the main result.
Recall that the $(\infty,2)$-category $\sX^T$ from \cref{talg} makes sense for any monad $T$ on an $(\infty,2)$-category $\sX$, not just lax-idempotent ones.

\begin{proposition}\label{main-thm-inverse-2}
	An adjunction $F : \sX \rightleftarrows \sY : G$ between $(\infty,2)$-categories is monadic and lax-idempotent if and only if the following conditions hold:
	\begin{enumerate}[(1)]
		\item $G \colon \sY \to \sX$ lands in the sub-$(\infty,2)$-category $\sX^T$ of $\sX$, where $T$ is the monad $GF$.
		\item The induced functor $G \colon \sY \to \sX^T$ is an equivalence of $(\infty,2)$-categories.
	\end{enumerate}
\end{proposition}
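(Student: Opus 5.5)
The ``only if'' direction is immediate from what we have. If the adjunction is lax-idempotent, then \cref{step2} shows that $G$ lands in $\sX^T$. If it is moreover monadic, the comparison functor $\sY \to \Alg_T(\sX)$ is an equivalence over $\sX$. Composing it with the equivalence $H \colon \Alg_T(\sX) \eqarrow \sX^T$ of \cref{main-thm-1}, which also lies over $\sX$, gives an equivalence $\sY \to \sX^T$ over $\sX$. Since $\sX^T \hookrightarrow \sX$ is a monomorphism, this composite must be the factorization of $G$, so $G \colon \sY \to \sX^T$ is an equivalence.

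For the ``if'' direction, assume (1) and (2). The strategy is to first show that $T = GF$ is lax-idempotent, and then reuse the proof of \cref{main-thm-1} to obtain monadicity.

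\textbf{Lax-idempotence.} By \cref{lem:li-monad-ptwise} it suffices to show $\mu_x \dashv \eta_{Tx}$ for each $x$, with $\mu_x = G(\varepsilon_{Fx})$.
\begin{enumerate}[(a)]
	\item Since $Tx = GFx$ lies in $\sX^T$ by (1), the unit $\eta_{Tx}$ admits a reflective left adjoint $\alpha_{Tx}$. It remains to identify $\alpha_{Tx} \simeq \mu_x$.
	\item Transport along the equivalence (2). Because $\sY \simeq \sX^T$ via $G$, the left adjoint $F$ corresponds to a left adjoint $\sX \to \sX^T$ of the inclusion. Hence the inclusion $\sX^T \hookrightarrow \sX$ is a right adjoint whose associated monad is $T$.
	\item For $y \in \sY$, the 1-morphism $\alpha_{Gy} \colon TGy \to Gy$ should be the counit of this adjunction. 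To see this, apply the adjunction bijection to $\id_{Gy}$: the candidate counit is the unique map $TGy \to Gy$ in $\sX^T$ restricting along $\eta_{Gy}$ to $\id_{Gy}$.
	\item Check that $\alpha_{Gy}$ is such a map. Its restriction is $\alpha_{Gy}\eta_{Gy} \simeq \id$. It lies in $\sX^T$ by the Beck--Chevalley-type argument of \cref{lem:lift-along-lali}/\cref{BCeasy}: the square for $\alpha_{Gy}$ commutes because $\alpha_{Gy} T(\alpha_{Gy})$ and $\alpha_{Gy}\alpha_{TGy}$ are both left adjoint to $\eta_{TGy}\eta_{Gy} \simeq T(\eta_{Gy})\eta_{Gy}$. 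Here one needs $T(\eta_{Gy}) \dashv$ something, which I expect to derive using the unit of $F\dashv G$.
	\item Taking $y = Fx$ then gives $\alpha_{Tx} \simeq G(\varepsilon_{Fx}) = \mu_x$, so $\mu_x \dashv \eta_{Tx}$, i.e.\ \cref{laxi-adj-conditions}\cref{laxi-adj4} holds.
\end{enumerate}

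I expect steps (c)--(d) to be the main obstacle: identifying the counit with $\alpha$ for the bare pointed endofunctor, before lax-idempotence is known.

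Here is an alternative, more robust route to \cref{laxi-adj-conditions}\cref{laxi-adj4}. Since $G$ is an equivalence onto $\sX^T$, the map $G(\varepsilon_y)$ lies in $\sX^T$ and is a retraction of $\eta_{Gy}$. On the other hand, $\twomap_{\sX^T}(TGy, Gy) \to \twomap_{\sX}(Gy,Gy)$, given by precomposing with $\eta_{Gy}$, is an equivalence by adjunction. The map $\alpha_{Gy}$ also lies in $\sX^T$, because the required square commutes by the adjoint-uniqueness argument sketched in (d). Both $G(\varepsilon_y)$ and $\alpha_{Gy}$ restrict to $\id_{Gy}$, so they agree. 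This gives \cref{laxi-adj-conditions}\cref{laxi-adj4}, hence lax-idempotence.

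\textbf{Monadicity.} Once $T$ is lax-idempotent, \cref{main-thm-1} identifies $\sX^T \simeq \Alg_T(\sX)$ over $\sX$. The comparison functor $\sY \to \Alg_T(\sX)$ corresponds under this identification to $G \colon \sY \to \sX^T$, since both are maps over $\sX$ into the monomorphism $\sX^T \hookrightarrow \sX$. By (2) this is an equivalence, so the adjunction is monadic.
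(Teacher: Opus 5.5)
Your proposal is correct and is essentially the paper's proof. You show that $\alpha_{Gy}$ (in particular $\alpha_{Tx}$) is a morphism of $\sX^T$ by uniqueness of left adjoints to $T(\eta_{Gy})\eta_{Gy} \simeq \eta_{TGy}\eta_{Gy}$. You then identify it with $G(\varepsilon_{Fx}) = \mu_x$ because both restrict to the identity along $\eta_{Tx}$; the paper does the same by lifting $\alpha_{Tx}$ to $\sY$ and comparing transposes.

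The step you flagged in (d) needs nothing from the unit of $F \dashv G$, and the adjunction you need goes the other way from what you wrote: $T(\eta_{Gy})$ must be a right adjoint, not a left adjoint. Since $T$ is a functor of $(\infty,2)$-categories it preserves adjunctions, so $\alpha_{Gy} \dashv \eta_{Gy}$ gives $T(\alpha_{Gy}) \dashv T(\eta_{Gy})$. That is exactly what the uniqueness argument requires.
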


\begin{proof}
	The ``only if'' direction is \cref{main-thm-1}.
	For the converse, note that monadicity follows from \cref{main-thm-1} once the adjunction is shown to be lax-idempotent.
	To this end, let $x$ in $\sX$ be given.
	Then $Tx = G(Fx)$ lies in $\sX^T$ by assumption, hence $\eta_{Tx}$ admits a left adjoint $\alpha_{Tx}$ such that $\alpha_{Tx}\eta_{Tx} \simeq \id_{Tx}$.
	The result therefore follows from \cref{lem:li-monad-ptwise} if we can show for any $x$ in $\sX$ that $\mu_x = G(\varepsilon_{Fx}) \colon TTx \to Tx$ is equivalent to $\alpha_{Tx}$.

	To see this, observe that $\alpha_{Tx}$ lies in $\sX^T$ since the squares
	\[\begin{tikzcd}[ampersand replacement=\&]
		TTTx \& TTx \\
		TTx \& Tx
		\arrow["{T(\alpha_{Tx})}", from=1-1, to=1-2]
		\arrow["{\alpha_{TTx}}"', from=1-1, to=2-1]
		\arrow[shorten <= 6, shorten >= 6, Rightarrow, from=1-2, to=2-1]
		\arrow["{\alpha_{Tx}}", from=1-2, to=2-2]
		\arrow["{\alpha_{Tx}}"', from=2-1, to=2-2]
	\end{tikzcd} \quad \text{and} \quad
	\begin{tikzcd}[ampersand replacement=\&]
		TTTx \& TTx \\
		TTx \& Tx
		\arrow["{T(\eta_{Tx})}"', from=1-2, to=1-1]
		\arrow["{\eta_{TTx}}", from=2-1, to=1-1]
		\arrow["\circlearrowright"{description}, draw=none, from=2-2, to=1-1]
		\arrow["{\eta_{Tx}}"', from=2-2, to=1-2]
		\arrow["{\eta_{Tx}}", from=2-2, to=2-1]
	\end{tikzcd}\]
	are adjoint to each other, hence the first one commutes.
	Because of the equivalence $G \colon \sY \eqarrow \sX^T$, we may lift $\alpha_{Tx}$ to a map $\alpha' \colon FTx \to Fx$ in $\sY$.
	Observe that the transpose of this map under the adjunction $F \dashv G$ is the composite
	\[Tx \xrightarrow{\eta_{Tx}} GFTx \xrightarrow{G(\alpha') \simeq \alpha_{Tx}} Tx,\]
	which is equivalent to $\id_{Tx}$ by definition of $\sX^T$.
	Since the transpose of $\varepsilon_{Fx} \colon FTx \to Fx$ is equivalent to $\id_{Tx}$ by definition, it follows that $\varepsilon_{Fx} \simeq \alpha'$ and hence that $\mu_{x} \simeq G(\varepsilon_{Fx}) \simeq G(\alpha') \simeq \alpha_{Tx}$ is left adjoint to $\eta_{Tx}$.
\end{proof}

\section{An adjoint functor theorem}\label{sec:adjfunthm}

We will now show that for any lax-idempotent monad $T$, one obtains an ``adjoint functor theorem''.
This result is inspired by \cite{ArkorLibertiea2024AdjointFunctorTheorems}, specifically Corollary 2.17.

\begin{theorem}\label{adjfunthm}
	Let $T$ be a lax-idempotent monad on an $(\infty,2)$-category $\sX$ and
	let $x$ and $y$ be objects of $\Alg_T(\sX) \subset \sX$.
	Then the following are equivalent for a morphism $f \colon x \to y$ in $\sX$:
	\begin{enumerate}[(1)]
		\item\label{adjfunthm-1} $f$ admits a right adjoint in $\sX$.
		\item\label{adjfunthm-2} $f$ lies in $\Alg_T(\sX) \subset \sX$ and $Tf \colon Tx \to Ty$ admits a right adjoint in $\sX$.
	\end{enumerate}
\end{theorem}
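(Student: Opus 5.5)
The plan is to prove the two implications separately. For \cref{adjfunthm-1} $\Rightarrow$ \cref{adjfunthm-2} I compare mates. For \cref{adjfunthm-2} $\Rightarrow$ \cref{adjfunthm-1} I transfer a right adjoint from $Tf$ to $f$ along the reflective adjunction $\alpha_x \dashv \eta_x$, using \cref{lem:lift-along-lali}. Throughout I use \cref{main-thm-1} to identify $\Alg_T(\sX)$ with $\sX^T$. Thus $x$ and $y$ come with reflective left adjoints $\alpha_x \dashv \eta_x$ and $\alpha_y \dashv \eta_y$, and by \cref{rem:BCeasy-XT} a morphism lies in $\Alg_T(\sX)$ exactly when there is some homotopy $\alpha_y T(f) \simeq f \alpha_x$.

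\emph{\cref{adjfunthm-1} $\Rightarrow$ \cref{adjfunthm-2}.} Suppose $f \dashv g$ in $\sX$.
\begin{itemize}
\item Since $T$ is a functor of $(\infty,2)$-categories, it preserves adjunctions, because the triangle identities are preserved. Hence $Tf \dashv Tg$, so $Tf$ has a right adjoint.
\item To see that $f$ lies in $\sX^T$, compose adjunctions. The morphism $\alpha_y \circ T(f)$ is left adjoint to $T(g) \circ \eta_y$, and $f \circ \alpha_x$ is left adjoint to $\eta_x \circ g$.
\item By naturality of $\eta$, we have $T(g)\eta_y \simeq \eta_x g$.
\item Left adjoints are unique up to equivalence, so $\alpha_y T(f) \simeq f\alpha_x$. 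This is exactly condition \cref{talg:item2} of \cref{talg}.
\end{itemize}

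\emph{\cref{adjfunthm-2} $\Rightarrow$ \cref{adjfunthm-1}.} Let $k$ be a right adjoint of $Tf$, and fix a homotopy $f\alpha_x \simeq \alpha_y T(f)$ witnessing that $f$ lies in $\sX^T$.
\begin{itemize}
\item The composite $\alpha_y T(f)$ has the right adjoint $k\,\eta_y$. Hence $h \coloneqq f\alpha_x \colon Tx \to y$ admits a right adjoint.
\item Now apply \cref{lem:lift-along-lali} to the reflective adjunction $\alpha_x \dashv \eta_x$ (so $\alpha_x\eta_x \simeq \id_x$) together with $h = f\alpha_x$.
\item We have $h\eta_x = f\alpha_x\eta_x \simeq f$. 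The lemma then gives that $f$ has a right adjoint as soon as $h$ does, provided its hypothesis holds.
\item That hypothesis asks that $h \whisk \eta$ be invertible, where $\eta \colon \id_{Tx} \Rightarrow \eta_x\alpha_x$ is the unit. We have $h\whisk\eta = f \whisk (\alpha_x \whisk \eta)$. By the triangle identity, $\alpha_x \whisk \eta$ is inverse to $\varepsilon \whisk \alpha_x$, which is invertible because the counit $\varepsilon$ is invertible. So the hypothesis holds.
\end{itemize}
Explicitly, the right adjoint of $f$ should be $\alpha_x\, k\, \eta_y$.

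The step needing most care is this last verification: checking that the conventions of \cref{lem:lift-along-lali} match this situation (which map plays $g$, which plays $f$, and which unit is whiskered), and that invertibility of $\alpha_x\whisk\eta$ really follows from reflectivity. Everything else is formal manipulation of mates and composites of adjunctions.
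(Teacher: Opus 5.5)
Your proof is correct and matches the paper's argument. For (1)$\Rightarrow$(2) you apply $T$ and take left adjoints in the naturality square $T(g)\eta_y \simeq \eta_x g$; for (2)$\Rightarrow$(1) you apply \cref{lem:lift-along-lali} to $\alpha_x \dashv \eta_x$, which yields the right adjoint $\alpha_x k \eta_y$. The only cosmetic difference is that you take $h = f\alpha_x$ rather than the equivalent $\alpha_y Tf$ used in the paper, which makes explicit why $h \whisk \eta$ is invertible (the paper leaves this implicit).
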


\begin{proof}
	Suppose $g$ is right adjoint to $f$.
	Then $Tg$ is right adjoint to $Tf$.
	Moreover, the square
	\[\begin{tikzcd}[ampersand replacement=\&]
		Tx \& Ty \\
		x \& y
		\arrow["Tg"', from=1-2, to=1-1]
		\arrow["{\eta_x}", from=2-1, to=1-1]
		\arrow["{\eta_y}", from=2-2, to=1-2]
		\arrow["g"', from=2-2, to=2-1]
	\end{tikzcd}\]
	commutes by naturality; taking left adjoints, it follows that $f$ lies in $\sX^T \simeq \Alg_T(\sX)$.
	We conclude that \cref{adjfunthm-1} implies \cref{adjfunthm-2}.

	For the converse, let $g'$ denote the right adjoint to $Tf$. By applying \cref{lem:lift-along-lali} to
	\[\begin{tikzcd}[ampersand replacement=\&]
		x \& Tx \& y
		\arrow["{\eta_x}"', shift right, hook, from=1-1, to=1-2]
		\arrow["{\alpha_x}"', shift right, from=1-2, to=1-1]
		\arrow[""{name=0, anchor=center, inner sep=0}, "{\alpha_y Tf}", from=1-2, to=1-3]
	\end{tikzcd}\]
	it follows that $f$ is left adjoint to $\alpha_x g' \eta_y$.
\end{proof}

\part{Examples}

\section{Cocartesian fibrations}\label{sec:cocart}

Given an $(\infty,1)$-category $\cB$, we write $\Cocart(\cB)$ for the locally full sub-$(\infty,2)$-category of $\Cat_{(\infty,1)/\cB}$ whose objects are cocartesian fibrations and whose 1-morphisms are the functors over $\cB$ that preserve cocartesian lifts.
By the dual of \cite[Theorem 5.2.8.(iii)]{RiehlVerity2022ElementsCategoryTheory}, $p \colon \cC \to \cB$ is a cocartesian fibration precisely if the inclusion $i \colon \cC \to \cC \times_{\cB} \Ar\cB$ admits a left adjoint $L_\cC$ relative to $\cB$:
\[\begin{tikzcd}[column sep=small]
	\cC && {\cC \times_{\cB}\Ar\cB} \\
	& \cB.
	\arrow[""{name=0, anchor=center, inner sep=0}, "i"', shift right=1, from=1-1, to=1-3]
	\arrow["p"',from=1-1, to=2-2]
	\arrow[""{name=1, anchor=center, inner sep=0}, shift right=0, bend right=30, from=1-3, to=1-1, dashed, "L_\cC"']
	\arrow["\ev_1",from=1-3, to=2-2]
	\arrow["\dashv"{anchor=center, rotate=-91}, draw=none, from=1, to=0]
\end{tikzcd}\]
The left adjoint $L_\cC$ takes a pair $(c,f)$ to the cocartesian transport $f_!c$.
This suggests the following result.

\begin{proposition}\label{prop:partial-cocart-monadic}
	Let $\cB$ be an $(\infty,1)$-category.
	Then the adjunction
	\[\begin{tikzcd}[ampersand replacement=\&]
		{- \times_\cB \Ar(\cB) : \Cat_{(\infty,1)/\cB}} \&\& {\Cocart(\cB) : \mathrm{incl}}
		\arrow[""{name=0, anchor=center, inner sep=0}, shift left=2, from=1-1, to=1-3]
		\arrow[""{name=1, anchor=center, inner sep=0}, hook', shift left=2, from=1-3, to=1-1]
		\arrow["\dashv"{anchor=center, rotate=-90}, draw=none, from=0, to=1]
	\end{tikzcd}\]
	between $(\infty,2)$-categories is monadic and lax-idempotent.
\end{proposition}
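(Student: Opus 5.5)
The plan is to apply \cref{main-thm-inverse-2}, which requires no a priori knowledge of monadicity. Write $T$ for the monad $\mathrm{incl}\circ(-\times_\cB \Ar(\cB))$ on $\Cat_{(\infty,1)/\cB}$. It sends $p\colon \cC \to \cB$ to $\ev_1\colon \cC\times_\cB \Ar\cB \to \cB$, where the fibre product is formed using $\ev_0$. Its unit $\eta_\cC$ is the inclusion $i\colon \cC \to \cC\times_\cB\Ar\cB$, $c \mapsto (c,\id_{p(c)})$. First I need the adjunction itself, at the level of $(\infty,2)$-categories. I take this to be the free cocartesian fibration of \cite{GepnerHaugsengea2017LaxColimitsFree}. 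To check it is an adjunction, I will exhibit the unit $i$ and the counit $L_\cC\colon \cC\times_\cB \Ar\cB\to \cC$ for $\cC$ cocartesian, and verify the triangle identities in the homotopy $(2,2)$-category. The two triangles are $L_\cC\, i\simeq \id$ and $L_{\cC\times_\cB\Ar\cB}\circ (i\times_\cB \Ar\cB) \simeq \id$, the latter using that cocartesian transport in $\ev_1$ is given by composition of arrows.

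Next I identify $\sX^T$ inside $\Cat_{(\infty,1)/\cB}$. By the dual of \cite[Theorem 5.2.8.(iii)]{RiehlVerity2022ElementsCategoryTheory}, $p$ is cocartesian precisely if $\eta_\cC = i$ admits a left adjoint relative to $\cB$. A relative left adjoint is the same as a left adjoint in the $(\infty,2)$-category $\Cat_{(\infty,1)/\cB}$. Since $i$ is fully faithful, \cref{lem:lali} shows this adjoint is automatically reflective. So the objects of $\sX^T$ are exactly the cocartesian fibrations.

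For 1-morphisms, $f\colon \cC\to\cD$ over $\cB$ lies in $\sX^T$ iff $L_\cD\circ (f\times_\cB\Ar\cB)\simeq f\circ L_\cC$, equivalently iff the Beck--Chevalley map $L_\cD(f c, g)\to f(L_\cC(c,g))$ is invertible (\cref{rem:BCeasy-XT}). Concretely, this map is $g_!f(c)\to f(g_!c)$, induced by applying $f$ to the cocartesian lift $c\to g_!c$. It is invertible exactly when $f$ sends that lift to a cocartesian morphism. Hence $f$ lies in $\sX^T$ iff $f$ preserves cocartesian lifts, and $\sX^T=\Cocart(\cB)$ as locally full sub-$(\infty,2)$-categories.

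Finally, condition (1) of \cref{main-thm-inverse-2} holds because $G=\mathrm{incl}$ lands in $\Cocart(\cB)=\sX^T$. Condition (2) holds because the induced functor is the identity $\Cocart(\cB)\to\sX^T$. Hence the adjunction is monadic and lax-idempotent.

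The main obstacle is the identification of the Beck--Chevalley transformation with the comparison map $g_!f(c)\to f(g_!c)$. This needs a concrete handle on the counit of the relative adjunction $L_\cC\dashv i$. I would take it from the Riehl--Verity description, where the unit of $L\dashv i$ at $(c,g)$ is the cocartesian lift $(c,g)\to (g_!c,\id)$ in $\cC\times_\cB\Ar\cB$. A lesser concern is checking that \cref{main-thm-inverse-2} is applied to an honest $(\infty,2)$-adjunction. Here I would rely on the triangle identities being checkable in the homotopy $2$-category.
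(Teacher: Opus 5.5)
Your proposal is correct and takes essentially the same route as the paper: you reduce via \cref{main-thm-inverse-2} to showing $(\Cat_{(\infty,1)/\cB})^T = \Cocart(\cB)$, handle objects through the dual of Riehl--Verity's Theorem 5.2.8.(iii) together with full faithfulness of the unit, and handle 1-morphisms by identifying the Beck--Chevalley map with $g_!f(c)\to f(g_!c)$. Your computation of that map from the unit $(c,g)\to(g_!c,\id)$ fills in exactly the details the paper leaves to the reader, and your triangle-identity check is extra rather than needed, since the paper takes the adjunction itself as given.
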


\begin{proof}
	Let us write $T$ for the monad associated to this adjunction.
	By \cref{main-thm-inverse-2}, it suffices to show that the locally full sub-$(\infty,2)$-category $(\Cat_{(\infty,1)/\cB})^T$ of $\Cat_{(\infty,1)/\cB}$, as defined in \cref{talg}, agrees with $\Cocart(\cB)$.
	Since the unit $\cC \hookrightarrow \cC \times_\cB \Ar(\cB)$ is always fully faithful, it follows from the dual of \cite[Theorem 5.2.8.(iii)]{RiehlVerity2022ElementsCategoryTheory} and \cref{rem:BCeasy-FF} that this is the case on objects.
	It therefore suffices to show that a functor $F \colon \cC \to \cD$ between cocartesian fibrations over $\cB$ preserves cocartesian morphisms if and only if the square
	\[\begin{tikzcd}[ampersand replacement=\&, column sep = large]
		{\cC \times_\cB \Ar(\cB)} \& {\cD \times_\cB \Ar(\cB)} \\
		\cC \& \cD
		\arrow["{F \times_\cB \Ar(\cB)}", from=1-1, to=1-2]
		\arrow["{L_\cC}", from=1-1, to=2-1]
		\arrow["{L_\cD}", from=1-2, to=2-2]
		\arrow["F"', from=2-1, to=2-2]
	\end{tikzcd}\]
	commutes.
	This is easily deduced from the fact that the left adjoints $L_\cC$ and $L_\cD$ take a pair $(x,f)$ to the cocartesian transport $f_!x$; we leave the details to the reader.
\end{proof}

\begin{remark}
	By taking opposites, we also obtain an analogue of \cref{prop:partial-cocart-monadic} for cartesian fibrations over $\cB$ where the adjunction is oplax-idempotent.
\end{remark}

\section{\texorpdfstring{$\infty$}{∞}-operads}\label{sec:operads}

Given an $\infty$-operad $\cP^\otimes \to \Fin_*$ in the sense of \cite[Definition 2.1.1.10]{HA}, its \emph{symmetric monoidal envelope} $\Env \cP$ has the universal property that for any symmetric monoidal $(\infty,1)$-category $\cC$, there is a natural equivalence
\[\Alg_\cP(\cC) = \twomap_{\Op_\infty}(\cP^\otimes,\cC^\otimes) \simeq \twomap_{\CMon(\Cat_{(\infty,1)})}(\Env\cP, \cC).\]
In other words, it is the left adjoint in an adjunction
\begin{equation}\label{OLD:Env-adjunction}
\begin{tikzcd}[ampersand replacement=\&]
	{\Env : \Op_\infty} \& {\CMon(\Cat_{(\infty,1)}) : \iota}
	\arrow[""{name=0, anchor=center, inner sep=0}, curve={height=-12pt}, from=1-1, to=1-2]
	\arrow[""{name=1, anchor=center, inner sep=0}, hook, from=1-2, to=1-1]
	\arrow["\dashv"{anchor=center, rotate=-90}, draw=none, from=0, to=1]
\end{tikzcd}
\end{equation}
of $(\infty,2)$-categories, where the right adjoint $\iota$ is the (non-full) inclusion of the $(\infty,2)$-category of symmetric monoidal $(\infty,1)$-categories into that of $\infty$-operads.

\begin{theorem}\label{OLD:Env-comonadic}
	The adjunction
	\[\begin{tikzcd}[ampersand replacement=\&]
		{\Env : \Op_\infty} \& {\CMon(\Cat_{(\infty,1)}) : \iota}
		\arrow[""{name=0, anchor=center, inner sep=0}, curve={height=-12pt}, from=1-1, to=1-2]
		\arrow[""{name=1, anchor=center, inner sep=0}, hook, from=1-2, to=1-1]
		\arrow["\dashv"{anchor=center, rotate=-90}, draw=none, from=0, to=1]
	\end{tikzcd}\]
	is lax-idempotent, monadic and comonadic.
\end{theorem}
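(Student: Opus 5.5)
The plan is to verify lax-idempotence through \cref{laxi-adj-conditions}, and then get monadicity and comonadicity from \cref{main-thm-inverse-2} and its dual for comonads, obtained by reversing the 1-morphisms.

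\textbf{Lax-idempotence.} By \cref{laxi-adj-conditions}\cref{laxi-adj4}, it suffices to show that for every symmetric monoidal $(\infty,1)$-category $\cC$, the counit $\varepsilon_\cC \colon \Env\cC \to \cC$ is left adjoint to the unit $\eta_{\cC} \colon \cC^\otimes \to \iota\Env\cC$, as maps of $\infty$-operads. I will use the explicit model of $\Env\cC$: its objects are tuples $(c_1,\dots,c_n)$, and a morphism is an active map in $\cC^\otimes$. The unit $\eta_\cC$ sends $(c_1,\ldots,c_n)$ in $\cC^\otimes_{\langle n\rangle}$ to the corresponding tuple of length-one objects; on the underlying categories it is the inclusion of length-one tuples. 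The counit $\varepsilon_\cC$ sends a tuple to $c_1\otimes\cdots\otimes c_n$. The tensor product gives a map $(c_1,\ldots,c_n)\to (c_1\otimes\cdots\otimes c_n)$ in $\Env\cC$, namely the cocartesian lift of the active map $\langle n\rangle\to\langle 1\rangle$. For any $d$ in $\cC$, active maps $(c_i)\to(d)$ correspond to maps $\otimes c_i\to d$. Hence these cocartesian lifts exhibit $\varepsilon$ as a fibrewise left adjoint of the inclusion of length-one tuples, with counit invertible, so the adjunction is reflective.

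It remains to promote this to an adjunction in $\Op_\infty$. Here I would apply \cref{easyadjun}: the composite $\varepsilon_\cC\eta_\cC\simeq\id$ is given, and I must exhibit a 2-morphism $\nu\colon\id\Rightarrow\eta_\cC\varepsilon_\cC$ in $\twomap_{\Op_\infty}(\Env\cC,\Env\cC)$ whose whiskerings are invertible. The natural candidate is the transformation with components the cocartesian maps above. The point to check is that these components assemble into a natural transformation of operad maps, i.e.\ a map $\Env\cC^\otimes\times\Delta^1\to\Env\cC^\otimes$ over $\Fin_*$. I would construct it as the cocartesian pushforward along the active maps $\langle n\rangle\to\langle1\rangle$ in $\Fin_*$, which comes from the natural transformation on $\Fin_*$ that collapses each fibre. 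This construction is the main obstacle. The invertibility of the whiskerings is then pointwise and clear.

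\textbf{Monadicity and comonadicity.} For monadicity, \cref{main-thm-inverse-2} asks that $\CMon(\Cat_{(\infty,1)})\to\Op_\infty^T$ be an equivalence. The objects of $\Op_\infty^T$ are the operads $\cP$ whose unit $\cP\to\Env\cP$ has a left adjoint retraction. Such a left adjoint provides cocartesian lifts of active maps, so $\cP$ is symmetric monoidal. The 1-morphisms of $\Op_\infty^T$ are the maps commuting with these adjoints, which are exactly the maps preserving tensor products, i.e.\ the strong monoidal ones. The inclusion is locally fully faithful, so this gives the equivalence.

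For comonadicity, I would use the comonad $\Env\iota$, which is lax-idempotent by \cref{laxi-adj-conditions}\cref{laxi-adj2}, together with the comonad form of \cref{main-thm-inverse-2}. Two things must then be shown. First, $\Env$ is locally fully faithful and is injective on equivalence classes onto its image. Second, its image consists of those $\cC$ for which $\varepsilon_\cC$ admits a fully faithful left adjoint $\nu_\cC$, with 1-morphisms the strong monoidal maps commuting with $\nu$. To identify $\cP$ from $\Env\cP$, I would recover $\cP$ as the full suboperad on the objects of $\Env\cP$ lying in the image of $\nu$, that is, the length-one objects. I expect this identification to be the second delicate point.
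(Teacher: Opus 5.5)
\textbf{Where the proposal is sound.} Your lax-idempotence and monadicity arguments are fine and agree with the paper. The paper cites the adjunction $\varepsilon\dashv\eta$ in $\Op_\infty$ rather than constructing it. Your ``main obstacle'' is just the relative left adjoint of $\cC^\otimes\to\cC^\otimes\times_{\Fin_*}\Ar_{\activ}(\Fin_*)$, coming from the characterization of cocartesian fibrations used in \cref{sec:cocart}. Your monadicity step is the paper's Step 5.

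\textbf{The gap: comonadicity.} You correctly reduce to showing that $\Env\colon\Op_\infty\to\coAlg_{\Env}$ is an equivalence. But you give no argument for either of its components:
\begin{itemize}
\item that $\Env$ is locally fully faithful;
\item that \emph{every} $\cC$ admitting $\nu_\cC$, and every strong monoidal map commuting with $\nu$, comes from an operad.
\end{itemize}
These are the whole content of the claim; faithfulness alone is Barkan--Steinebrunner's theorem. Recovering $\cP$ from $\Env\cP$ as its length-one objects gives at most a one-sided inverse $R$ with $R\circ\Env\simeq\id$. That by itself yields neither local full faithfulness nor essential surjectivity. Also, ``objects of $\Env\cP$ lying in the image of $\nu$'' does not parse as written, since $\nu_{\Env\cP}$ lands in $\Env\iota\Env\cP$. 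Presumably you mean those $c$ with $\nu(c)$ in the essential image of $\eta$.

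\textbf{The missing idea.} You need an intrinsic, functorial notion of ``length one'' in an arbitrary $\cC$ admitting $\nu$, together with a factorization result showing that such objects freely generate $\cC$. The paper uses $\otimes$-primes: non-unit objects $c$ such that any splitting $c\simeq d\otimes e$ has exactly one factor equivalent to $\unit$. The argument runs as follows.
\begin{enumerate}
\item $c$ is prime iff $\nu(c)\simeq\eta(c)$. This uses strong monoidality and full faithfulness of $\nu$, the fact that $\nu(\unit)$ is the unique length-zero object, and the adjunction $\nu\dashv\varepsilon$ on mapping spaces to rule out extra unit entries in $\nu(c)$.
\item Prime factorization: write $\nu(c)\simeq\eta(d_1)\boxtimes\cdots\boxtimes\eta(d_n)$. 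The counit $\nu\varepsilon\Rightarrow\id$ is invertible on the image of $\nu$, and this forces $\nu(d_i)\simeq\eta(d_i)$. So every $d_i$ is prime, and the essential image of $\nu$ is $\Env(\cC^{\mathrm{prime}})$.
\item Consequently $\varepsilon$ restricts to an equivalence $\Env(\cC^{\mathrm{prime}})\to\cC$ inverse to $\nu$.
\item A map $f$ in $\coAlg_{\Env}$ satisfies $\Env(f)\nu\simeq\nu f$ and $\Env(f)$ preserves length, so $f$ preserves primes. Hence $(-)^{\mathrm{prime}}$ defines a functor $\coAlg_{\Env}\to\Op_\infty$.
\item The unit gives $\cP^\otimes\simeq\Env(\cP)^{\mathrm{prime}}$ and the restricted counit gives $\Env(\cC^{\mathrm{prime}})\simeq\cC$. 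These exhibit $(-)^{\mathrm{prime}}$ as an inverse to $\Env$.
\end{enumerate}
Local full faithfulness then comes for free, rather than being a separate step for which your proposal has no method.
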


In particular, \cref{main-thm-2} gives the following description of $\Op_\infty$ as a locally full sub-$(\infty,2)$-category of $\CMon(\Cat_{(\infty,1)})$.

\begin{corollary}\label{OLD:Env-faithful}
	The functor $\Env \colon \Op_{\infty} \to \CMon(\Cat_{(\infty,1)})$ is a locally fully faithful inclusion.
	Its image consists of those symmetric monoidal $(\infty,1)$-categories $\cC$ for which the counit $\varepsilon \colon \Env(\cC) \to \cC$ admits a left adjoint $\nu$, and those 1-morphisms $f \colon \cC \to \cD$ for which
	\[\begin{tikzcd}[ampersand replacement=\&]
		{\Env \cC} \& {\Env \cD} \\
		\cC \& \cD
		\arrow["{\Env f}", from=1-1, to=1-2]
		\arrow["\nu", from=2-1, to=1-1]
		\arrow["f"', from=2-1, to=2-2]
		\arrow["\nu", from=2-2, to=1-2]
	\end{tikzcd}\]
	commutes. \qed
\end{corollary}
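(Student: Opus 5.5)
The plan is to deduce the corollary formally from \cref{OLD:Env-comonadic} together with the comonad half of \cref{main-thm-2}. Write $Q \coloneqq \Env \circ \iota$ for the comonad on $\CMon(\Cat_{(\infty,1)})$ associated to the adjunction \cref{OLD:Env-adjunction}, with counit $\varepsilon \colon \Env(\iota\cC) \to \cC$.

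First, \cref{OLD:Env-comonadic} says that the adjunction is lax-idempotent, meaning the monad $\iota\Env$ is lax-idempotent. By \cref{laxi-adj-conditions}, \cref{laxi-adj1} $\Leftrightarrow$ \cref{laxi-adj2}, the comonad $Q$ is then a lax-idempotent comonad in the sense of \cref{deff}.

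Second, comonadicity says that the canonical comparison functor $\Op_\infty \to \coAlg_Q(\CMon(\Cat_{(\infty,1)}))$ is an equivalence. This comparison is the dual of the one from \cref{algT-terminal}, obtained by applying it to the opposite $(\infty,2)$-categories. Moreover, it lies over $\CMon(\Cat_{(\infty,1)})$, so composing with the forgetful functor recovers $\Env$. Hence $\Env$ is identified, up to equivalence, with the forgetful functor $\coAlg_Q(\CMon(\Cat_{(\infty,1)})) \to \CMon(\Cat_{(\infty,1)})$.

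Third, the comonad case of \cref{main-thm-2} applies to this forgetful functor. It shows the functor is a locally fully faithful inclusion. Its objects are those $\cC$ for which $\varepsilon_\cC$ admits a fully faithful left adjoint $\nu$, and its $1$-morphisms are those $f$ with $\nu f \simeq \Env(f)\nu$, which is exactly the commuting square in the statement. Transporting along the equivalence of the second step gives both the inclusion claim and the description of the image.

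The only point needing care is that the statement asks merely for a left adjoint $\nu$, whereas \cref{main-thm-2} asks for a \emph{fully faithful} one. This is the dual of the situation in \cref{rem:BCeasy-FF}. I would handle it in one of two ways.

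The first route is to show that the condition is automatic. By \cref{lem:lali} and \cref{lem:extra-adj-ff} applied in $\CMon(\Cat_{(\infty,1)})^{\op}$, it suffices to show that $\varepsilon_\cC$ is fully faithful in the opposite $(\infty,2)$-category. Here I would try to prove that $\varepsilon_\cC \colon \Env\cC \to \cC$ is a Dwyer--Kan localization on underlying $(\infty,1)$-categories, and then invoke \cref{lem:lali-dk-loc}. The candidate argument is that $\varepsilon_\cC$ admits the section $\cC \to \Env\cC$ given by inclusion of singleton lists, which at least makes $\varepsilon_\cC$ essentially surjective. Essential surjectivity alone does not give a localization, however, so this route may fail.

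The fallback is to read the corollary with ``fully faithful'' implicit, as in \cref{main-thm-2}. I expect that deciding between these two routes is the only real obstacle; the rest of the argument is formal.
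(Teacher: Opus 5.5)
Your reduction is the one the paper intends. The corollary is recorded without a separate proof as a formal consequence of \cref{OLD:Env-comonadic}, transported to the comonad $\Env\iota$ via \cref{laxi-adj-conditions} and combined with the comonad half of \cref{main-thm-2}.

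What remains is exactly the point you flag, and neither of your two options settles it. The singleton-list section only gives essential surjectivity of $\varepsilon_\cC$. Reading ``fully faithful'' into the hypothesis proves a weaker statement than the one asserted. This is real content, not a formality: for a general lax-idempotent comonad, a left adjoint of a counit component need not be fully faithful. To see this, dualize the example in \cref{rem:BCeasy-FF} and take the constant comonad $*$ on $\Cat_{(\infty,1)}^{\op}$.

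The missing observation is that the singleton-list functor $\eta_\cC \colon \cC \to \Env(\iota\cC)$ is not merely a section of $\varepsilon_\cC$ but a fully faithful \emph{right adjoint} of it. Step 1 of the proof of \cref{OLD:Env-comonadic} gives $\varepsilon_\cC \dashv \eta_\cC$ as lax monoidal functors. Moreover, $\eta_\cC$ is fully faithful because $\Map_{\Env(\iota\cC)}((c),(d)) \simeq \Map_\cC(c,d)$.

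Given $\nu \dashv \varepsilon_\cC$, you therefore have a chain $\nu \dashv \varepsilon_\cC \dashv \eta_\cC$ on underlying $(\infty,1)$-categories. By \cref{lem:extra-adj-ff}, the unit $\id \Rightarrow \varepsilon_\cC \nu$ is invertible. Invertibility of a monoidal transformation is detected on the underlying transformation, so $\nu \dashv \varepsilon_\cC$ is coreflective in $\CMon(\Cat_{(\infty,1)})$, i.e.\ $\nu$ is fully faithful there. This is precisely the remark ``$\eta$ and hence $\nu$ are fully faithful'' in Step 2 of that proof, and it is what makes the corollary's weaker-looking hypothesis legitimate.

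Alternatively, you can finish your own reduction directly. By a triangle identity, the composite
\[\twomap(\cC,\cD) \xrightarrow{\varepsilon_\cC^*} \twomap(\Env\iota\cC,\cD) \simeq \twomap_{\Op_\infty}(\iota\cC,\iota\cD)\]
is the action of $\iota$ on mapping $(\infty,1)$-categories. This is fully faithful because $\iota$ is locally fully faithful, so $\varepsilon_\cC$ is fully faithful in $\CMon(\Cat_{(\infty,1)})^{\op}$ for every $\cC$. No Dwyer--Kan argument is needed; such an argument about the underlying functor of $\varepsilon_\cC$ would on its own only control plain, not monoidal, natural transformations.
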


The faithfulness of $\Env \colon \Op_\infty \to \CMon(\Cat_{(\infty,1)})$ was also shown in \cite[Theorem 3.2.13]{BarkanSteinebrunner2026EquifiberedApproachProperads} using different methods.
Note that our characterization of the image of $\Env \colon \Op_\infty \to \CMon(\Cat_{(\infty,1)})$ is different from the one given there.

We now turn to the proof of \cref{OLD:Env-comonadic}.
The main idea is that an $\infty$-operad $\cP$ can be recovered from $\Env(\cP)$ as the sub-$\infty$-operad spanned by the \emph{$\otimes$-primes}: these are the non-unit objects $c$ with the property that if $c \simeq d \otimes d'$, then either $d$ or $d'$ must be the unit.

\begin{proof}[Proof of \cref{OLD:Env-comonadic}]
	Given an $\infty$-operad $\cP^\otimes \to \Fin_*$, its envelope is the symmetric monoidal $(\infty,1)$-category defined by the cocartesian fibration $\ev_1 \colon \cP^\otimes \times_{\Fin_*} \Ar_\activ(\Fin_*) \to \Fin_*$, see \cite[\S 2.2.4]{HA} for details.
	In particular, we may identify the objects of $\Env(\cP)$ with finite collections $(c_i)_{i \in I}$ of colours of $\cP$; we will refer to $|I|$ as the \emph{length} of such a collection.
	The symmetric monoidal structure $\boxtimes$ on $\Env(\cP)$ is given by concatenating such collections.

	\textbf{Step 1: The adjunction is lax-idempotent.}
	Let $\cC^\otimes \to \Fin_*$ be a symmetric monoidal $(\infty,1)$-category.
	It follows as in \cite[Observation 2.1.2]{BarkanHaugsengea2025EnvelopesAlgebraicPatterns} that the counit $\varepsilon \colon \Env \iota \cC \to \cC$ is left adjoint to $\eta \colon \cC \to \Env \iota \cC$ as lax monoidal functors.
	By item \cref{laxi-adj4} of \cref{laxi-adj-conditions}, the adjunction $\Env \dashv \iota$ is lax-idempotent.

	\textbf{Step 2: $\otimes$-primes.}
	Let us write $\coAlg_{\Env}$ for $\coAlg_{\Env}(\CMon(\Cat_{(\infty,1)}))$ throughout this proof.
	To show that $\coAlg_{\Env}$ is equivalent to $\Op_\infty$, we first study what it means for a symmetric monoidal $(\infty,1)$-category $\cC$ that $\varepsilon \colon \Env \iota \cC \to \cC$ admits a further left adjoint $\nu$.
	In this case we have adjunctions
	\[\begin{tikzcd}[ampersand replacement=\&]
		{\Env(\iota \cC)} \&\& \cC
		\arrow[""{name=0, anchor=center, inner sep=0}, "\varepsilon"{description, pos=0.45}, from=1-1, to=1-3]
		\arrow[""{name=1, anchor=center, inner sep=0}, "\eta", curve={height=-18pt}, from=1-3, to=1-1]
		\arrow[""{name=2, anchor=center, inner sep=0}, "\nu"', curve={height=18pt}, from=1-3, to=1-1]
		\arrow["\dashv"{anchor=center, rotate=-90}, draw=none, from=0, to=1]
		\arrow["\dashv"{anchor=center, rotate=-90}, draw=none, from=2, to=0]
	\end{tikzcd}\]
	where $\nu$ and $\varepsilon$ are strong symmetric monoidal and $\eta$ is lax symmetric monoidal.
	Moreover, $\eta$ and hence $\nu$ are fully faithful (cf.\ \cref{lem:extra-adj-ff}).
	Note that the essential image of $\eta$ consists precisely of the length 1 objects.
	Let us call an object $c$ in $\cC$ an \emph{$\otimes$-prime}, or \emph{prime} for short, if whenever $c \simeq d \otimes e$, then exactly one of $d$ and $e$ is equivalent to the unit of $\cC$.
	We claim that an object $c$ of $\cC$ is prime if and only if $\nu(c) \simeq \eta(c)$.
	Namely, suppose $c$ is not prime.
	If $c \simeq \unit_\cC$, then $\nu(c)$ is the unit of $\Env(\cC)$ by strong monoidality, which has length $0 \neq 1$.
	Since $\nu$ is fully faithful and $\nu(\unit_\cC)$ is the unique object of length $0$ in $\Env(\cC)$, this also implies that $\nu(c)$ has length $\geq 1$ whenever $c$ is not the unit.
	In particular, if $c$ is of the form $d \otimes e$ where $d$ and $e$ are both not the unit, then $\nu(c) = \nu(d) \boxtimes \nu(e)$ must have length $>1$, hence $\nu(c) \neq \eta(c)$.
	If on the other hand $c$ is prime, then since $c \simeq \varepsilon \nu(c)$, it follows that $\nu(c)$ must be a collection of the form
	\[\nu(c) = (c,d_1,\cdots,d_n)\]
	such that $d_1 \otimes \cdots \otimes d_n = \unit_\cC$.
	It follows that $\nu(\unit_\cC) \simeq \nu(d_1) \boxtimes \cdots \boxtimes \nu(d_n)$ has length $0$, hence that $d_i \simeq \unit_\cC$ for every $i$.
	By the description of $\Env(\cC)$ in \cite[Remark 2.2.4.3]{HA} that if $n \geq 1$, then the map
	\[\varepsilon \colon \Map_{\Env(\cC)}(\nu(c), (c,\unit_\cC,\unit_\cC)) \to \Map_{\cC}(\varepsilon\nu(c), \varepsilon(c,\unit_\cC,\unit_\cC)) \simeq \Map_\cC(c,c)\]
	is not injective on path components. This contradicts that $\nu$ is left adjoint to $\varepsilon$, so we conclude that $n=0$.
	It follows that $\nu(c)$ has length 1 and hence lies in the essential image of $\eta$.
	This implies that the comparison map
	\[\nu(c) \xrightarrow{\mathrm{unit}} \eta \varepsilon \nu(c) \simeq \eta(c)\]
	is an equivalence, hence $c$ is $\otimes$-prime if and only if $\nu(c) \simeq \eta(c)$.
	We let $\cC^\mathrm{prime}$ denote the full sub-$\infty$-operad, in the sense of \cite[\S 2.2.1]{HA}, of $\iota \cC$ spanned by the $\otimes$-primes.

	\textbf{Step 3: Prime factorization.}
	Given a general $c$, one can view $\nu(c)$ as a ``prime factorization'' of $c$.
	To see this, note that $\nu(c)$ is of the form $\eta(d_1) \boxtimes \cdots \boxtimes \eta(d_n)$ for certain $d_1,\ldots,d_n$ in $\cC$, and hence that $c \simeq \varepsilon \nu(c) \simeq d_1 \otimes \cdots \otimes d_n$.
	By strong monoidality of $\varepsilon$ and $\nu$, it follows that
	\[\begin{tikzcd}[ampersand replacement=\&]
		\& {\nu\varepsilon(\eta(d_1) \boxtimes \cdots \boxtimes \eta(d_n))} \& {\eta(d_1) \boxtimes \cdots \boxtimes \eta(d_n)} \\
		{\nu(d_1) \boxtimes \cdots \boxtimes \nu(d_n)} \& {\nu \varepsilon\eta(d_1) \boxtimes \cdots \boxtimes \nu\varepsilon\eta(d_n)}
		\arrow["{\mathrm{counit}}", from=1-2, to=1-3]
		\arrow["\simeq", from=1-2, to=2-2]
		\arrow["{\mathrm{counit}^{\otimes n}}"', from=2-2, to=1-3]
		\arrow["\simeq"', from=2-2, to=2-1]
	\end{tikzcd}\]
	commutes.
	The top horizontal arrow is an equivalence since $\eta(d_1) \boxtimes \cdots \boxtimes \eta(d_n)$ lies in the image of $\nu$, hence the diagonal arrow is also an equivalence in $\Env(\cC)$.
	It follows that $\nu(d_i) \simeq \eta(d_i)$ for every $i$, hence that $d_i$ is a $\otimes$-prime.
	As a direct consequence, we see that the essential image of $\nu \colon \cC \to \Env(\cC)$ is precisely $\Env(\cC^\mathrm{prime})$.

	\textbf{Step 4: The equivalence $\Op_\infty \simeq \coAlg_{\Env}$.}
	By \cref{main-thm-2}, $\coAlg_{\Env}$ is the locally full sub-$(\infty,2)$-category of $\CMon(\Cat_{(\infty,1)})$ whose objects are those symmetric monoidal $(\infty,1)$-categories for which the extra left adjoint $\nu$ exists, and its 1-morphisms are the symmetric monoidal functors for which
	\[\begin{tikzcd}[ampersand replacement=\&]
		{\Env \cC} \& {\Env \cD} \\
		\cC \& \cD
		\arrow["{\Env f}", from=1-1, to=1-2]
		\arrow["\nu", from=2-1, to=1-1]
		\arrow["f", from=2-1, to=2-2]
		\arrow["\nu", from=2-2, to=1-2]
	\end{tikzcd}\]
	commutes.
	By the description of the prime objects in terms of $\nu$, it follows that any functor in $\coAlg_{\Env}$ preserves primes, hence $\cC \mapsto \cC^\mathrm{prime}$ defines a functor $\coAlg_{\Env} \to \Op_\infty$.
	Since the primes in $\Env(\cP)$ are the length 1 objects, it follows that $\Env(\cP)^\mathrm{prime}$ agrees with the image of $\cP^\otimes \hookrightarrow \iota \Env(\cP)$. In particular, there is a natural equivalence $\cP^\otimes \simeq \Env(\cP)^\mathrm{prime}$.
	On the other hand, the inclusion $\cC^\mathrm{prime} \hookrightarrow \iota \cC$ induces the strong monoidal functor $\varepsilon|_{\Env(\cC^\mathrm{prime})} \colon \Env(\cC^\mathrm{prime}) \to \cC$ which is an equivalence because it is inverse to $\nu$.
	We therefore see that $\Env \colon \Op_\infty \to \coAlg_{\Env}$ is an equivalence with inverse $(-)^\mathrm{prime}$.

	\textbf{Step 5: Monadicity.}
	Since the adjunction is lax-idempotent, to see that it is monadic it suffices, by \cref{main-thm-1}, to show the following:
	\begin{itemize}
		\item an $\infty$-operad $\cP^\otimes \to \Fin_*$ is a cocartesian fibration if $\eta \colon \cP^\otimes \to \Env(\cP)^\otimes$ admits a left adjoint $\varepsilon$ over $\Fin_*$, and
		\item a map of $\infty$-operads $f \colon \cP^\otimes \to \cQ^\otimes$ preserves cocartesian lifts of active morphisms if
		\[\begin{tikzcd}
			{\cP^\otimes} & {\cQ^\otimes} \\
			{\Env(\cP)^\otimes} & {\Env(\cQ)^\otimes}
			\arrow[from=1-1, to=1-2, "f"]
			\arrow[from=2-1, to=1-1,"\varepsilon"]
			\arrow[from=2-1, to=2-2, "{\Env(f)^\otimes}"]
			\arrow[from=2-2, to=1-2,"\varepsilon"]
		\end{tikzcd}\]
		commutes.
	\end{itemize}
	The first of these follows from \cite[Remark 2.2.4.12]{HA}, and the second follows from the characterization of the cocartesian morphisms given in item (2) of \cite[Lemma 2.2.4.11]{HA}.
\end{proof}

\section{Cocompletions}\label{sec:cocompletions}

Given a collection $I$ of $(\infty,1)$-categories, we will call a colimit an \emph{$I$-colimit} if it is indexed by an $(\infty,1)$-category that lies in $I$.
An $(\infty,1)$-category that admits all $I$-colimits will be called \emph{$I$-cocomplete}.
We will write $\Catco{I}$ for the $(\infty,2)$-category of small $I$-cocomplete $(\infty,1)$-categories and functors between them that preserve $I$-colimits.
Given two small collections $I \subset J$ of small $(\infty,1)$-categories, Lurie \cite[\S 5.3.6]{HTT} showed that the inclusion
\[\Catco{J} \hookrightarrow \Catco{I}\]
admits an explicit $(\infty,2)$-categorical left adjoint $\cP_I^J$.
We will show the following:

\begin{theorem}\label{thm:I-J-colimit-compl}
	Let $I \subset J$ be small collections of small $(\infty,1)$-categories.
	Then the adjunction
	\[\begin{tikzcd}[ampersand replacement=\&]
		{\cP_I^J : \Catco{I}} \&\& {\Catco{J} : \mathrm{incl}}
		\arrow[""{name=0, anchor=center, inner sep=0}, shift left=2, from=1-1, to=1-3]
		\arrow[""{name=1, anchor=center, inner sep=0}, hook', shift left=2, from=1-3, to=1-1]
		\arrow["\dashv"{anchor=center, rotate=-90}, draw=none, from=0, to=1]
	\end{tikzcd}\]
	is lax-idempotent and monadic.
	Moreover, the unit $j_\cC \colon \cC \to \cP_I^J(\cC)$ is fully faithful for any $\cC$ in $\Catco{I}$.
\end{theorem}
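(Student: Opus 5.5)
The plan is to apply \cref{main-thm-inverse-2}. Writing $T = \mathrm{incl} \circ \cP_I^J$ for the associated monad on $\Catco{I}$, it suffices to show two things: that the locally full sub-$(\infty,2)$-category $(\Catco{I})^T$ of \cref{talg} coincides with $\Catco{J}$, and that the inclusion $\Catco{J} \hookrightarrow \Catco{I}$ factors through it and identifies the two. I would first recall Lurie's construction from \cite[\S 5.3.6]{HTT}: $\cP_I^J(\cC)$ is the smallest full subcategory of $\cP_I(\cC)$ (presheaves preserving $I$-limits, i.e.\ taking $I$-colimits to limits) that contains the representables and is closed under $J$-colimits, and $j_\cC$ is the Yoneda embedding corestricted to it. Full faithfulness of $j_\cC$ is then immediate from the Yoneda lemma, which settles the last claim of the theorem. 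By \cref{rem:BCeasy-FF}, it also follows that an object $\cC$ lies in $(\Catco{I})^T$ if and only if $j_\cC$ admits a left adjoint in $\Catco{I}$, i.e.\ a left adjoint that preserves $I$-colimits.

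The objects are matched as follows. If $\cC$ is $J$-cocomplete, the universal property of $\cP_I^J(\cC)$ gives a $J$-colimit-preserving extension $\alpha_\cC \colon \cP_I^J(\cC) \to \cC$ of $\id_\cC$, namely the left Kan extension along $j_\cC$. The standard argument, as for $\cP(\cC) \to \cC$, shows that $\alpha_\cC \dashv j_\cC$: the extension is computed by the colimit over the comma category, and the maps $\Map(\alpha_\cC F, c) \simeq \Map(F, j_\cC c)$ can be checked on representables and then extended along $J$-colimits, since $j_\cC$ takes values in presheaves preserving $I$-limits. Conversely, suppose $j_\cC$ has a left adjoint $\alpha$ with $\alpha j_\cC \simeq \id$. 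Then $\cC$ is a retract, via a reflective localization, of the $J$-cocomplete category $\cP_I^J(\cC)$, so $\cC$ is $J$-cocomplete: the colimit of a $J$-diagram $p$ in $\cC$ is $\alpha(\colim j_\cC p)$, because left adjoints preserve colimits and $\alpha j_\cC \simeq \id$.

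The 1-morphisms are matched next. For an $I$-colimit-preserving functor $f \colon \cC \to \cD$ between $J$-cocomplete categories, I need to show that $f$ preserves $J$-colimits if and only if $\alpha_\cD\, \cP_I^J(f) \simeq f \alpha_\cC$. Both $\alpha_\cD \cP_I^J(f)$ and $f\alpha_\cC$ restrict along $j_\cC$ to $f$, because $\cP_I^J(f) j_\cC \simeq j_\cD f$. The composite $\alpha_\cD \cP_I^J(f)$ always preserves $J$-colimits. Hence, if the square commutes, then $f \simeq f\alpha_\cC j_\cC$ is $J$-cocontinuous, using that $\alpha_\cC$ is a localization through which every $J$-colimit in $\cC$ is computed as above. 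Conversely, if $f$ preserves $J$-colimits, then both composites are $J$-cocontinuous extensions of $f$, so the universal property of $\cP_I^J(\cC)$ identifies them. Together with the matching of objects, this shows that $G = \mathrm{incl}$ lands in $(\Catco{I})^T$ and induces an equivalence onto it; local full faithfulness is automatic, since both are locally full in $\Catco{I}$. \cref{main-thm-inverse-2} then gives that the adjunction is monadic and lax-idempotent.

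The main obstacle is the object step, specifically establishing cleanly that $\alpha_\cC \dashv j_\cC$ with $\alpha_\cC$ preserving $I$-colimits, so that the adjunction lives in $\Catco{I}$ and not merely in $\Cat_{(\infty,1)}$. I would handle this by noting that the adjunction in $\Cat_{(\infty,1)}$ restricts to one in the locally full sub-$(\infty,2)$-category $\Catco{I}$, since both functors preserve $I$-colimits: $\alpha_\cC$ because it preserves all $J$-colimits and $I \subset J$, and $j_\cC$ by Lurie's description of $\cP_I^J$. The unit and counit are then 2-morphisms in the locally full subcategory.
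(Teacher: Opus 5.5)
Your proposal is correct and follows essentially the same route as the paper. You reduce via \cref{main-thm-inverse-2} to matching objects and then 1-morphisms. For objects, you establish $\alpha_\cC \dashv j_\cC$ by the same ``check on representables, then close under $J$-colimits'' minimality argument. The only small deviation is in the ``only if'' part of the 1-morphism step. There you use uniqueness in the universal property of $\cP_I^J(\cC)$ instead of showing that the Beck--Chevalley transformation is invertible on a subcategory closed under $J$-colimits. This is enough, since \cref{talg} only requires the existence of some commuting homotopy.
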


\begin{proof}
	It is shown in \cite[Proposition 5.3.6.2]{HTT} that $j_\cC \colon \cC \to \cP_I^J(\cC)$ is fully faithful for any $\cC$ in $\Catco{I}$.

	By \cref{main-thm-inverse-2}, the result will follow from the following two claims:

	\textbf{Claim 1:} \textit{An $I$-cocomplete $(\infty,1)$-category $\cC$ is $J$-cocomplete if and only if $j_\cC \colon \cC \to \cP_I^J(\cC)$ admits a left adjoint.}

	For the ``if'' direction, suppose that $j_\cC$ admits such a left adjoint $L_\cC$ and let $D \colon \cA \to \cC$ be a diagram with $\cA$ in $J$.
	Then the colimit of $D$ exists and is given by $L_\cC(\colim_{a \in \cA} j_\cC D(a))$.
	
	For the converse, suppose that $\cC$ is $J$-cocomplete. It follows from \cite[Proposition 5.3.6.2]{HTT} that there is a unique $J$-colimit preserving functor $L_\cC \colon \cP_I^J(\cC) \to \cC$ such that $L_\cC j_\cC \simeq \id_\cC$.
	Now let $\cE \subset \cP_I^J(\cC)$ be the full sub-$(\infty,1)$-category on those objects $X$ that have the following property:
	for any $c$ in $\cC$, the map
	\[L_\cC \colon \Map_{\cP_I^J(\cC)}(X,j_\cC(c)) \to \Map_{\cC}(L_\cC(X), L_\cC j_\cC (c)) \simeq \Map_\cC(L_\cC(X), c) \]
	is an equivalence.
	Then $L_\cC$ is left adjoint to $j_\cC$ if $\cE = \cP_I^J(\cC)$.
	Observe that $\cE \subset \cP_I^J(\cC)$ contains the essential image of $j_\cC$ and that it is closed under $J$-colimits since $L_\cC$ preserves $J$-colimits.
	It follows from the construction of $\cP_I^J(\cC)$ in the proof of \cite[Proposition 5.3.6.2]{HTT} that the smallest full sub-$(\infty,1)$-category with this property is $\cP_I^J(\cC)$ itself.

	\textbf{Claim 2:} \textit{Let $F \colon \cC \to \cD$ be an $I$-colimit preserving functor between $J$-cocomplete $(\infty,1)$-categories.
	Then $F$ preserves $J$-colimits if and only if the square
	\begin{equation}\label{eq:BC-rel-cocompl}\begin{tikzcd}[ampersand replacement=\&]
		{\cP_I^J(\cC)} \& {\cP_I^J(\cD)} \\
		\cC \& \cD
		\arrow["{\cP_I^J(F)}", from=1-1, to=1-2]
		\arrow["{L_\cC}"', from=1-1, to=2-1]
		\arrow[shorten <= 12, shorten >= 12,Rightarrow, from=1-2, to=2-1]
		\arrow["{L_\cD}", from=1-2, to=2-2]
		\arrow["F"', from=2-1, to=2-2]
	\end{tikzcd}\end{equation}
	commutes, where $L_\cC$ and $L_\cD$ denote the left adjoints of $j_\cC$ and $j_\cD$, respectively.}

	This follows by essentially the same argument as the first claim.
	If the square \cref{eq:BC-rel-cocompl} commutes, then $F$ preserves $J$-colimits by how these were constructed in the proof of Claim 1.
	On the other hand, if $F$ preserves $J$-colimits, then the collection of objects of $\cP_I^J(\cC)$ on which the Beck--Chevalley transformation \cref{eq:BC-rel-cocompl} is invertible is closed under $J$-colimits and contains $j_\cC(\cC)$, hence it is the whole $(\infty,1)$-category.
\end{proof}

\begin{example}[Finite colimit cocompletion]\label{exa:rex-monadic}
	$(\infty,1)$-categories that admit finite colimits and functors that preserve these are also known as \emph{right exact $(\infty,1)$-categories} and \emph{right exact functors}, respectively.
	Taking $I = \varnothing$ and letting $J = \mathrm{fin}$ consist of all finite $(\infty,1)$-categories in \cref{thm:I-J-colimit-compl}, it follows that $\Cat_{(\infty,1)}^\mathrm{rex}$ is monadic over $\Cat_{(\infty,1)}$, where the monad is given by the finite cocompletion functor $\cP^\mathrm{fin}$.
	Similarly, taking $J$ to consist of all $\kappa$-small $(\infty,1)$-categories for some infinite cardinal $\kappa$ yields the monadic lax-idempotent adjunction
	\[\begin{tikzcd}[ampersand replacement=\&]
		{\cP^\kappa : \Cat_{(\infty,1)}} \&\& {\Catco{\kappa} : \mathrm{incl}.}
		\arrow[""{name=0, anchor=center, inner sep=0}, shift left=2, from=1-1, to=1-3]
		\arrow[""{name=1, anchor=center, inner sep=0}, hook', shift left=2, from=1-3, to=1-1]
		\arrow["\dashv"{anchor=center, rotate=-90}, draw=none, from=0, to=1]
	\end{tikzcd}\]
\end{example}

\begin{example}[Coproduct cocompletion]\label{exa:coprod-cocompletion}
	If in the previous example one takes $J$ to consist of all finite discrete $(\infty,1)$-categories, then $\cP^\sqcup \coloneqq \cP^J$ is the finite coproduct completion functor and one obtains a monadic lax-idempotent adjunction
	\[\begin{tikzcd}[ampersand replacement=\&]
		{\cP^\sqcup : \Cat_{(\infty,1)}} \&\& {\Catco{\sqcup} : \mathrm{incl}.}
		\arrow[""{name=0, anchor=center, inner sep=0}, shift left=2, from=1-1, to=1-3]
		\arrow[""{name=1, anchor=center, inner sep=0}, hook', shift left=2, from=1-3, to=1-1]
		\arrow["\dashv"{anchor=center, rotate=-90}, draw=none, from=0, to=1]
	\end{tikzcd}\]
\end{example}

\begin{example}[Cocompletion under small colimits]
	To discuss cocompletion under all small colimits, let $\cU \lneq \cV$ be inaccessible cardinals. Call an $(\infty,1)$-category \emph{small} if it is $\cU$-small, \emph{large} if it is $\cV$-small and otherwise \emph{huge}.
	Let us write $\widehat{\Cat}_{(\infty,1)}$ for the (huge) $(\infty,2)$-category of large $(\infty,1)$-categories, and $\widehat{\Cat}{}_{(\infty,1)}^{\mathrm{cocomp}}$ for the locally full sub-$(\infty,2)$-category of $(\infty,1)$-categories that admit small colimits and functors between them that preserve these.
	Applying \cref{exa:rex-monadic} to the cardinal $\cU$, it follows that there is a monadic lax-idempotent adjunction
	\[\begin{tikzcd}[ampersand replacement=\&]
		{\cP : \widehat{\Cat}_{(\infty,1)}} \&\& {\widehat{\Cat}{}_{(\infty,1)}^{\mathrm{cocomp}} : \mathrm{incl}.}
		\arrow[""{name=0, anchor=center, inner sep=0}, shift left=2, from=1-1, to=1-3]
		\arrow[""{name=1, anchor=center, inner sep=0}, hook', shift left=2, from=1-3, to=1-1]
		\arrow["\dashv"{anchor=center, rotate=-90}, draw=none, from=0, to=1]
	\end{tikzcd}\]
	The left adjoint $\cP(\cC)$ agrees with the usual presheaf $(\infty,1)$-category $\Fun(\cC^\op, \Spc)$ whenever $\cC$ is small, but in general it is the full sub-$(\infty,1)$-category of $\Fun(\cC^\op, \widehat{\Spc})$ spanned by the small colimits of representables.
	Here $\widehat{\Spc}$ denotes the (huge) $(\infty,1)$-category of large spaces.
\end{example}

\begin{example}[Ind-cocompletion]\label{exa:ind-monadic}
	Letting $J$ be the collection of all small $\kappa$-filtered $(\infty,1)$-categories, where $\kappa$ is a (small) regular cardinal, we obtain the monadic lax-idempotent adjunction
	\begin{equation}\label{eq:Ind-adjunction}\begin{tikzcd}[ampersand replacement=\&]
		{\Ind_\kappa : \widehat{\Cat}_{(\infty,1)}} \&\& {\widehat{\Cat}{}_{(\infty,1)}^{\kappa\mhyphen\mathrm{fil}} : \mathrm{incl}.}
		\arrow[""{name=0, anchor=center, inner sep=0}, shift left=2, from=1-1, to=1-3]
		\arrow[""{name=1, anchor=center, inner sep=0}, hook', shift left=2, from=1-3, to=1-1]
		\arrow["\dashv"{anchor=center, rotate=-90}, draw=none, from=0, to=1]
	\end{tikzcd}\end{equation}
	Taking $I$ to consist of all $\kappa$-small $(\infty,1)$-categories and $J$ to consist of all small $(\infty,1)$-categories, it follows from \cite[Example 5.3.6.8]{HTT} that $\cP_I^J = \Ind_\kappa$ and that the adjunction \cref{eq:Ind-adjunction} restricts to a monadic lax-idempotent adjunction
	\[\begin{tikzcd}[ampersand replacement=\&]
		{\Ind_\kappa : \widehat{\Cat}{}_{(\infty,1)}^{\kappa\mhyphen\mathrm{cocomp}}} \&\& {\widehat{\Cat}{}_{(\infty,1)}^{\mathrm{cocomp}} : \mathrm{incl}.}
		\arrow[""{name=0, anchor=center, inner sep=0}, shift left=2, from=1-1, to=1-3]
		\arrow[""{name=1, anchor=center, inner sep=0}, hook', shift left=2, from=1-3, to=1-1]
		\arrow["\dashv"{anchor=center, rotate=-90}, draw=none, from=0, to=1]
	\end{tikzcd}\]
\end{example}

By dualizing the above, we also obtain results about free limit completions: Given a collection $I$ of $(\infty,1)$-categories, write $\Cat_{(\infty,1)}^{I\mhyphen\mathrm{comp}}$ for the $(\infty,2)$-category of small $I$-complete $(\infty,1)$-categories and functors between them that preserve $I$-limits.
We will write $\bar{I}$ for the collection of $(\infty,1)$-categories $\cC$ such that $\cC^\op \in I$.
Then it follows formally that the inclusion
\[\Cat_{(\infty,1)}^{J\mhyphen\mathrm{comp}} \hookrightarrow \Cat_{(\infty,1)}^{I\mhyphen\mathrm{comp}}\]
is right adjoint to the functor $\cC \mapsto \cP_{\bar{I}}^{\bar{J}}(\cC^\op)^\op$.
Moreover, the following is a formal consequence of \cref{thm:I-J-colimit-compl}:

\begin{corollary}\label{cor:limit-completion}
	Let $I \subset J$ be small collections of small $(\infty,1)$-categories.
	Then the adjunction
	\[\begin{tikzcd}[ampersand replacement=\&]
		{\cP_{\bar{I}}^{\bar{J}}((-)^\op)^\op : \Cat_{(\infty,1)}^{I\mhyphen\mathrm{comp}}} \&\& {\Cat_{(\infty,1)}^{J\mhyphen\mathrm{comp}} : \mathrm{incl}}
		\arrow[""{name=0, anchor=center, inner sep=0}, shift left=2, from=1-1, to=1-3]
		\arrow[""{name=1, anchor=center, inner sep=0}, hook', shift left=2, from=1-3, to=1-1]
		\arrow["\dashv"{anchor=center, rotate=-90}, draw=none, from=0, to=1]
	\end{tikzcd}\]
	is oplax-idempotent and monadic.
	Moreover, the unit $j_\cC \colon \cC \to \cP_{\bar{I}}^{\bar{J}}(\cC^\op)^\op$ is fully faithful for any $\cC$ in $\Cat_{(\infty,1)}^{I\mhyphen\mathrm{comp}}$. \qed
\end{corollary}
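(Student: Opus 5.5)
The plan is to deduce \cref{cor:limit-completion} from \cref{thm:I-J-colimit-compl} by transport along the equivalence $(-)^\op \colon \Cat_{(\infty,1)} \to \Cat_{(\infty,1)}$. This functor is covariant on functors and contravariant on natural transformations, so it induces an equivalence of $(\infty,2)$-categories $\Cat_{(\infty,1)} \simeq \Cat_{(\infty,1)}^{\mathrm{co}}$. A functor $\cC \to \cD$ preserves $I$-limits if and only if $\cC^\op \to \cD^\op$ preserves $\bar I$-colimits. Hence $(-)^\op$ restricts to equivalences
\[\Cat_{(\infty,1)}^{I\mhyphen\mathrm{comp}} \simeq \bigl(\Catco{\bar I}\bigr)^{\mathrm{co}}, \qquad \Cat_{(\infty,1)}^{J\mhyphen\mathrm{comp}} \simeq \bigl(\Catco{\bar J}\bigr)^{\mathrm{co}},\]
and these are compatible with the two inclusions.

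Next, I would apply $(-)^{\mathrm{co}}$ to the adjunction of \cref{thm:I-J-colimit-compl}, taken for the collections $\bar I \subset \bar J$. Reversing 2-morphisms turns an adjunction $F \dashv G$ into $G \dashv F$. However, at the level of $(\infty,2)$-categories, a 2-functor $F\colon \sX\to\sY$ with right adjoint $G$ gives an adjunction $F^{\mathrm{co}} \dashv G^{\mathrm{co}}$ whose unit and counit are reversed. One has to be careful here: an adjunction of 2-functors is an adjunction in the $(\infty,3)$-category $\Cat_{(\infty,2)}$, and $(-)^{\mathrm{co}}$ acts on $\Cat_{(\infty,2)}$ covariantly on 1- and 2-morphisms but reverses 3-morphisms. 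So the direction of the adjunction is preserved.

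Conjugating by $(-)^\op$ then gives exactly the adjunction in the statement, with left adjoint $\cC \mapsto \cP_{\bar I}^{\bar J}(\cC^\op)^\op$ and unit $j_{\cC^\op}^\op$. The latter is fully faithful because $j_{\cC^\op}$ is and $(-)^\op$ preserves full faithfulness. Monadicity transfers as well, since the comparison functor $\sY \to \Alg_{UF}(\sX)$ is built naturally from the adjunction data and $\Alg_{T^{\mathrm{co}}}(\sX^{\mathrm{co}}) \simeq \Alg_T(\sX)^{\mathrm{co}}$. A slicker route to this is \cref{main-thm-inverse-2} and its dual version \cref{main-thm-2}: the explicit descriptions of the two sides simply correspond under $\mathrm{co}$.

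Finally, for lax- versus oplax-idempotence: the monad of the transported adjunction is $T^{\mathrm{co}}$ conjugated by $(-)^{\op}$, where $T = \cP_{\bar I}^{\bar J}$. The requirement $\mu_x \dashv \eta_{Tx}$ in $\sX$ becomes $\eta_{Tx} \dashv \mu_x$ in $\sX^{\mathrm{co}}$, i.e.\ oplax-idempotence.

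The main obstacle is the bookkeeping in the middle step: checking that $(-)^{\mathrm{co}}$ sends an adjunction of 2-functors to an adjunction in the same direction, and that monadicity is invariant under it. I would handle this by noting that Heine's Eilenberg--Moore characterization, \cref{algT-terminal}, is invariant under the automorphism $\mathrm{co}$ of $\Cat_{(\infty,2)}$. As a consistency check, the unit of the transported adjunction goes $\cC \to \cP(\cC^\op)^\op$, which is right, since limit completions receive the Yoneda embedding.
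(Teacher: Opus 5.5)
Your proposal is correct and is the same approach as the paper, which states the corollary without proof as a formal consequence of \cref{thm:I-J-colimit-compl}: you transport that adjunction for $\bar I \subset \bar J$ along $(-)^\op \colon \Cat_{(\infty,1)} \simeq \Cat_{(\infty,1)}^{\mathrm{co}}$, which turns $\mu_x \dashv \eta_{Tx}$ into $\eta_{Tx} \dashv \mu_x$ while preserving monadicity and the full faithfulness of the unit $j_{\cC^\op}^\op$. One small wording slip: under $(-)^{\mathrm{co}}$ the unit and counit of $F \dashv G$ keep their direction, and only the invertible triangle-identity 3-cells and the 2-cells internal to $\sX$ are reversed; this is exactly why your conclusion $F^{\mathrm{co}} \dashv G^{\mathrm{co}}$ is right.
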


In the construction of the symmetric topos in \cref{sec:symtop}, the following example is important.

\begin{example}[Finite limit completion]\label{exa:lex-cats}
	$(\infty,1)$-categories that admit finite limits and functors that preserve these are usually called \emph{left exact}, and we will denote their $(\infty,2)$-category by $\Cat_{(\infty,1)}^\lex$.
	Taking $I = \varnothing$ and $J = \mathrm{fin}$ to consist of all finite $(\infty,1)$-categories in \cref{cor:limit-completion}, we obtain an oplax-idempotent monadic adjunction
	\[\begin{tikzcd}[ampersand replacement=\&]
		{(-)^\lex : \Cat_{(\infty,1)}} \&\& {\Cat_{(\infty,1)}^\lex : \mathrm{incl}.}
		\arrow[""{name=0, anchor=center, inner sep=0}, shift left=2, from=1-1, to=1-3]
		\arrow[""{name=1, anchor=center, inner sep=0}, hook', shift left=2, from=1-3, to=1-1]
		\arrow["\dashv"{anchor=center, rotate=-90}, draw=none, from=0, to=1]
	\end{tikzcd}\]
	In particular, it follows that $\cC$ admits finite limits if and only if $\cC \hookrightarrow \cC^\lex$ admits a right adjoint $\rho_\cC$, and that $f \colon \cC \to \cD$ preserves finite limits if and only if the square
	\[\begin{tikzcd}
		{\cC^\lex} & {\cD^\lex} \\
		\cC & \cD
		\arrow["{f^\lex}", from=1-1, to=1-2]
		\arrow["{\rho_\cC}"', from=1-1, to=2-1]
		\arrow["{\rho_\cD}", from=1-2, to=2-2]
		\arrow["f"', from=2-1, to=2-2]
	\end{tikzcd}\]
	commutes.
\end{example}

\begin{remark}
	We expect there to be analogues of \cref{thm:I-J-colimit-compl,cor:limit-completion} for enriched $(\infty,1)$-categories and weighted (co)limits in the sense of \cite{Heine2024HigherAlgebraWeighted}, and internal $(\infty,1)$-categories and internal colimits in the sense of \cite{MartiniWolf2024ColimitsCocompletionsInternal}, with essentially the same proofs.
\end{remark}

\section{Compactly assembled \texorpdfstring{$(\infty,1)$}{(∞,1)}-categories and continuous algebras}\label{sec:CompAss}

In \cref{exa:ind-monadic} we saw that a (potentially large) $(\infty,1)$-category $\cC$ admits filtered colimits precisely if the Yoneda embedding $\cC \hookrightarrow \Ind(\cC)$ admits a left adjoint.
If this left adjoint admits a further left adjoint $\hat y$, then, following \cite{JohnstoneJoyal1982ContinuousCategoriesExponentiable}, we call $\cC$ a \emph{continuous $(\infty,1)$-category}.\footnote{In the higher-categorical literature, the term \emph{compactly assembled $(\infty,1)$-category} is more common than continuous $(\infty,1)$-category. However, the definition of compact assembly typically includes that $\cC$ is accessible \cite{SAG,efimovKtheoryLocalizingInvariants2025,Ramzi2024DualizablePresentableinftycategories}, or even that $\cC$ is presentable \cite{KNPSheavesOnManfiolds}. Since it will be convenient to not include this for now, we opted to use the term \emph{continuous} $(\infty,1)$-category. However, we will discuss the accessible/presentable case at the end of this section.}
We will write $\widehat{\Cat}{}_{(\infty,1)}^{\mathrm{cont}}$ for the $(\infty,2)$-category of continuous $(\infty,1)$-categories and filtered colimit preserving functors $F \colon \cC \to \cD$ between them for which the mate square
\[\begin{tikzcd}[ampersand replacement=\&]
	\cC \& \cD \\
	{\Ind(\cC)} \& {\Ind(\cD)}
	\arrow["F", from=1-1, to=1-2]
	\arrow["{\hat y}"', from=1-1, to=2-1]
	\arrow[Rightarrow, shorten >= 6, shorten <= 12, from=1-2, to=2-1]
	\arrow["{\hat y}", from=1-2, to=2-2]
	\arrow["{\Ind(F)}"', from=2-1, to=2-2]
\end{tikzcd}\]
commutes strongly; we call such functors \emph{strongly continuous}.
This is an example of the general notion of a \emph{continuous algebra} over a lax-idempotent monad.

\begin{definition}[Continuous $T$-algebras]\label{contalg}
	Let $T$ be a lax-idempotent monad on an $(\infty,2)$-category $\sX$.
	Then a \emph{continuous $T$-algebra} is an object $x$ in $\sX$ such that the unit $\eta_x \colon x \to Tx$ admits a reflective left adjoint $\alpha_x \colon Tx \to x$ which admits a further left adjoint $\nu_x \colon x \to Tx$.
	Given continuous $T$-algebras $x$ and $y$ in $\sX$, a map $f \colon x \to y$ is called a \emph{morphism of continuous $T$-algebras} if the mate squares
	\[\begin{tikzcd}[ampersand replacement=\&]
		Tx \& Ty \\
		x \& y
		\arrow["Tf", from=1-1, to=1-2]
		\arrow["{\alpha_x}"', from=1-1, to=2-1]
		\arrow[shorten >= 9, shorten <= 6, Rightarrow, from=1-2, to=2-1]
		\arrow["{\alpha_y}", from=1-2, to=2-2]
		\arrow["f"', from=2-1, to=2-2]
	\end{tikzcd}
	\quad \text{and} \quad
	\begin{tikzcd}[ampersand replacement=\&]
		x \& y \\
		Tx \& Ty
		\arrow["f", from=1-1, to=1-2]
		\arrow["{\nu_x}"', from=1-1, to=2-1]
		\arrow[shorten >= 6, shorten <= 9, Rightarrow, from=1-2, to=2-1]
		\arrow["{\nu_y}", from=1-2, to=2-2]
		\arrow["Tf"', from=2-1, to=2-2]
	\end{tikzcd}\]
	both commute.
	Note that the commutativity of the first square is used to construct the mate of the second square.
	We will write $\sX^{TT}$ for the locally full sub-$(\infty,2)$-category of $\sX$ spanned by the continuous $T$-algebras and morphisms of continuous $T$-algebras.
\end{definition}

It is clear that $\sX^{TT}$ is a locally full sub-$(\infty,2)$-category of the $(\infty,2)$-category $\sX^T \simeq \Alg_T(\sX)$ (see \cref{talg,main-thm-1}).
Moreover, since
\[\begin{tikzcd}[ampersand replacement=\&]
	{T : \sX} \&\& {\Alg_T(\sX) : U}
	\arrow[""{name=0, anchor=center, inner sep=0}, shift left=2, from=1-1, to=1-3]
	\arrow[""{name=1, anchor=center, inner sep=0}, hook', shift left=2, from=1-3, to=1-1]
	\arrow["\dashv"{anchor=center, rotate=-90}, draw=none, from=0, to=1]
\end{tikzcd}\]
is a lax-idempotent adjunction, it follows from \cref{laxi-adj-conditions} that $TU$ is a lax-idempotent comonad.

\begin{proposition}\label{contalg-thm}
	Let $T$ be a lax-idempotent monad on an $(\infty,2)$-category $\sX$.
	The inclusion $\sX^{TT} \hookrightarrow \Alg_T(\sX)$ of the $(\infty,2)$-category of continuous $T$-algebras into the $(\infty,2)$-category of $T$-algebras is comonadic, with associated (lax-idempotent) comonad $TU$.
\end{proposition}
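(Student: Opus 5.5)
The plan is to apply the comonadic dual of \cref{main-thm-inverse-2} (equivalently, \cref{main-thm-2} for lax-idempotent comonads) to the adjunction $U \dashv T$ between $\Alg_T(\sX) \simeq \sX^T$ and $\sX$. Rather than working with this adjunction directly, I would consider the comonad $Q \coloneqq TU$ on $\Alg_T(\sX)$. By \cref{laxi-adj-conditions} it is lax-idempotent, with counit $\varepsilon_x = \alpha_x \colon Tx \to x$ for a $T$-algebra $x$; the counit of $T \dashv U$ at $x$ is the algebra structure map, which by \cref{main-thm-1} is $\alpha_x$. Its comultiplication is $T\eta_x$, viewed as a map in $\sX^T$. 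Applying \cref{main-thm-2}, $\coAlg_{Q}(\Alg_T(\sX)) \hookrightarrow \Alg_T(\sX)$ is the locally full sub-$(\infty,2)$-category whose objects are those $x$ for which $\alpha_x \colon Tx \to x$ admits a fully faithful left adjoint $\nu_x$ \emph{in} $\Alg_T(\sX)$, and whose 1-morphisms are those $f$ with $\nu_y f \simeq T(f)\nu_x$. It then remains to identify this with $\sX^{TT}$, and to check that the canonical comparison functor from $\sX^{TT}$ is this inclusion.

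\textbf{Objects.} Since $\Alg_T(\sX) \subset \sX$ is locally full, an adjunction in $\sX$ between $T$-algebras is an adjunction in $\Alg_T(\sX)$ exactly when both 1-morphisms lie in $\Alg_T(\sX)$. By \cref{adjfunthm}, applied to $\nu_x \colon x \to Tx$ with right adjoint $\alpha_x$ in $\sX$, the left adjoint $\nu_x$ automatically lies in $\Alg_T(\sX)$. The map $\alpha_x$ lies in $\sX^T$ because $x$ is an algebra: this is the square $\alpha_x \mu_x \simeq \alpha_x T(\alpha_x)$ of \cref{alphaTalpha}. Fully faithfulness of $\nu_x$ is automatic by \cref{lem:extra-adj-ff}, since $\alpha_x \dashv \eta_x$ is reflective; note that fully faithfulness in $\Alg_T(\sX)$ follows from that in $\sX$ by local fullness. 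Thus the objects are exactly the continuous $T$-algebras.

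\textbf{Morphisms.} For continuous algebras $x,y$ and $f$ in $\Alg_T(\sX)$, the first mate square of \cref{contalg} commutes by \cref{rem:BCeasy-XT}. So the condition reduces to the second mate $\nu_y f \Rightarrow T(f)\nu_x$ being invertible. This is where I expect the main obstacle. \cref{main-thm-2} phrases the condition as the existence of \emph{some} homotopy $\nu_y f \simeq T(f)\nu_x$, and I need that this is equivalent to invertibility of the specific mate. This is a version of \cref{BCeasy}, dualized: $\nu_x,\nu_y$ are fully faithful left adjoints of $\alpha_x,\alpha_y$, i.e.\ coreflective. Rerunning the proof of \cref{BCeasy} with the invertible unit $\id \Rightarrow \alpha\nu$ in place of the invertible counit should go through. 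One must also check that the mate of \cref{contalg}, which is built using the invertible first square, agrees with the Beck--Chevalley transformation from \cref{main-thm-2}. This is a routine pasting comparison.

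Finally, since both $\sX^{TT}$ and $\coAlg_Q(\Alg_T(\sX))$ are locally full subcategories of $\Alg_T(\sX)$ with the same objects and 1-morphisms, they coincide. The comparison functor of the comonadic adjunction is, by the dual of \cref{main-thm-1}, compatible with the inclusions, hence an equivalence. This gives comonadicity with associated comonad $TU$.
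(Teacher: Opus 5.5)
Your proposal is correct and follows essentially the same route as the paper. You identify $\coAlg_{TU}(\Alg_T(\sX))$ via the comonad case of \cref{main-thm-2}, get full faithfulness of $\nu_x$ from \cref{lem:extra-adj-ff}, and use \cref{adjfunthm} to see that the left adjoint $\nu_x$ automatically lies in $\Alg_T(\sX)$, so that the coalgebras are exactly $\sX^{TT}$.

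The extra checks you spell out are left implicit in the paper: that $\alpha_x$ lies in $\sX^T$, and that the morphism condition of \cref{main-thm-2} matches invertibility of the mate in \cref{contalg}. The latter follows from the dual of \cref{BCeasy}, which holds for any choice of filler of the first square, so your ``pasting comparison'' is not actually needed. Also, the adjunction you start from is $T \dashv U$, not $U \dashv T$.
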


\begin{proof}
	By \cref{lem:extra-adj-ff} and \cref{main-thm-2}, we may identify $\coAlg_{TU}(\Alg_T(\sX))$ with the full sub-$(\infty,2)$-category of $\sX^{TT}$ spanned by those continuous $T$-algebras $x$ for which $\nu_x \colon x \to Tx$ lies in $\Alg_T(\sX)$.
	By \cref{adjfunthm}, it follows that a left adjoint 1-morphism in $\sX$ between $T$-algebras always lies in $\Alg_T(\sX)$, hence this full sub-$(\infty,2)$-category is $\sX^{TT}$ itself.
\end{proof}

\begin{example}[$\infty$-operads as continuous algebras]
	Let $\CMon^\lax(\Cat_{(\infty,1)})$ denote the $(\infty,2)$-category of symmetric monoidal $(\infty,1)$-categories and lax monoidal functors between them.
	It follows from the results in \cref{sec:operads} that $\infty$-operads are the continuous algebras for the lax-idempotent monad $\Env$ on $\CMon^\lax(\Cat_{(\infty,1)})$.
\end{example}

\begin{example}[Free algebras are continuous]\label{exa:Free-talg-is-continuous}
	Let $T$ be a lax-idempotent monad on an $(\infty,2)$-category $\sX$.
	Then any free $T$-algebra is a continuous $T$-algebra; more generally, the free functor $T \colon \sX \to \Alg_T(\sX)$ lands in the locally full sub-$(\infty,2)$-category $\sX^{TT}$ of continuous $T$-algebras.
	This can be seen directly from the definition of the $(\infty,2)$-category $\sX^{TT}$ and the fact that by \cref{alt}, $T(\eta_x)$ is left adjoint to $\mu_x$.
	Alternatively, this follows by combining \cref{contalg-thm} and the dual of \cite[Proposition 4.7.3.3.(1)]{HA}: the latter implies that $T$ lifts along $\coAlg_{TU}(\Alg_T(\sX)) \to \Alg_T(\sX)$ (at least at the level of underlying $(\infty,1)$-categories), which by \cref{contalg-thm} implies that the free functor $\sX \to \Alg_T(\sX)$ must land in the locally full sub-$(\infty,2)$-category $\sX^{TT}$.
\end{example}

\begin{corollary}
	The $(\infty,2)$-category of continuous $(\infty,1)$-categories is the $(\infty,2)$-category of continuous algebras for the lax-idempotent monad $\Ind$ on the $(\infty,2)$-category $\widehat{\Cat}_{(\infty,1)}$ of large $(\infty,1)$-categories.
	In particular, the adjunction
	\begin{equation}\begin{tikzcd}[ampersand replacement=\&]
		{\mathrm{incl} : \widehat{\Cat}{}_{(\infty,1)}^{\mathrm{cont}}} \&\& {\widehat{\Cat}{}_{(\infty,1)}^{\mathrm{fil}} : \Ind}
		\arrow[""{name=1, anchor=center, inner sep=0}, shift left=2, from=1-3, to=1-1]
		\arrow[""{name=0, anchor=center, inner sep=0}, hook', shift left=2, from=1-1, to=1-3]
		\arrow["\dashv"{anchor=center, rotate=-90}, draw=none, from=0, to=1]
	\end{tikzcd}\end{equation}
	is comonadic and lax-idempotent.
\end{corollary}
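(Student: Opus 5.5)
The plan is to deduce the statement directly from \cref{contalg-thm} applied to the lax-idempotent monad $T = \Ind$ on $\widehat{\Cat}_{(\infty,1)}$ (here $\Ind = \Ind_\omega$, the case $\kappa=\omega$ of \cref{exa:ind-monadic}). By \cref{exa:ind-monadic}, the adjunction $\Ind \dashv \mathrm{incl}$ between $\widehat{\Cat}_{(\infty,1)}$ and $\widehat{\Cat}{}^{\mathrm{fil}}_{(\infty,1)}$ is monadic and lax-idempotent. Combined with \cref{main-thm-1}, this identifies $\Alg_{\Ind}(\widehat{\Cat}_{(\infty,1)})$ with $\widehat{\Cat}{}^{\mathrm{fil}}_{(\infty,1)}$, viewed as the locally full sub-$(\infty,2)$-category $\sX^T$. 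Under this identification, for $\cC$ with filtered colimits the map $\alpha_\cC$ is the colimit functor $\Ind(\cC) \to \cC$ left adjoint to the Yoneda embedding. Moreover, the free–forgetful adjunction becomes $\Ind \dashv \mathrm{incl}$, so the comonad $TU$ is $\Ind$ regarded as an endofunctor of $\widehat{\Cat}{}^{\mathrm{fil}}_{(\infty,1)}$.

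The first step is to check that $\sX^{TT}$ from \cref{contalg} is exactly $\widehat{\Cat}{}^{\mathrm{cont}}_{(\infty,1)}$.

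\emph{Objects.} A continuous $\Ind$-algebra is some $\cC$ for which the Yoneda embedding has a reflective left adjoint $\alpha_\cC$ that in turn admits a further left adjoint $\hat y$. The Yoneda embedding is always fully faithful, so by \cref{rem:BCeasy-FF} reflectivity is automatic. This is precisely the definition of a continuous $(\infty,1)$-category.

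\emph{Morphisms.} A morphism of continuous $\Ind$-algebras $f$ is required to satisfy two conditions. The first is that the square with the $\alpha$'s commutes; by \cref{talg} and \cref{exa:ind-monadic}, this says exactly that $f$ preserves filtered colimits. The second is that the mate square involving $\hat y$ commutes; this is the definition of strong continuity. Hence the two locally full sub-$(\infty,2)$-categories of $\widehat{\Cat}_{(\infty,1)}$ coincide.

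With this identification in hand, \cref{contalg-thm} says that the inclusion $\widehat{\Cat}{}^{\mathrm{cont}}_{(\infty,1)} \hookrightarrow \widehat{\Cat}{}^{\mathrm{fil}}_{(\infty,1)}$ is comonadic with lax-idempotent comonad $\Ind$. In particular its right adjoint is $\Ind$, landing in continuous categories; this is consistent with \cref{exa:Free-talg-is-continuous}. The adjunction is lax-idempotent by \cref{def:laxi-adjunctions} and \cref{laxi-adj-conditions}, since its associated comonad is lax-idempotent.

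The main point needing care is bookkeeping rather than mathematics. One must confirm that the two mate squares in \cref{contalg} specialize to the same 2-cells as those used in defining strongly continuous functors, in particular that the mate for $\hat y$ is formed using the commutativity of the $\alpha$-square as stated. One must also note the size conventions: $\Ind$ is applied to large categories, so the adjunction of \cref{exa:ind-monadic} is taken at the appropriate universe level.
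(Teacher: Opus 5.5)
Your proposal is correct and follows the paper's proof. You apply \cref{contalg-thm} to $T=\Ind$ on $\widehat{\Cat}_{(\infty,1)}$ and identify $\sX^{TT}$ with $\widehat{\Cat}{}^{\mathrm{cont}}_{(\infty,1)}$, which the paper takes to hold by definition and which you spell out on objects and morphisms; you then deduce lax-idempotence of the adjunction from that of the comonad via \cref{laxi-adj-conditions}.
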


\begin{proof}
	Since continuous $(\infty,1)$-categories are by definition the continuous algebras for the lax-idempotent monad $\Ind(-)$ on $\widehat{\Cat}_{(\infty,1)}$, it follows from \cref{contalg-thm} that the inclusion $\widehat{\Cat}{}_{(\infty,1)}^{\mathrm{cont}} \hookrightarrow \widehat{\Cat}{}_{(\infty,1)}^{\mathrm{fil}}$ is comonadic and that the associated comonad is lax-idempotent.
	The adjunction is lax-idempotent by \cref{laxi-adj-conditions}.
\end{proof}

Following \cite{SAG,efimovKtheoryLocalizingInvariants2025}, we call an $(\infty,1)$-category \emph{compactly assembled} if it is an accessible and continuous $(\infty,1)$-category.
We write $\mathrm{CompAss}$ for the full sub-$(\infty,2)$-category of $\widehat{\Cat}{}_{(\infty,1)}^{\mathrm{cont}}$ spanned by the compactly assembled $(\infty,1)$-categories.
Because of size reasons, compactly assembled $(\infty,1)$-categories are not the continuous algebras for a lax-idempotent monad on $\widehat{\Cat}_{(\infty,1)}$.
However, one still has the following comonadicity result, where $\mathrm{Acc}^\mathrm{fil}_{\aleph_1}$ denotes the $(\infty,2)$-category of $\aleph_1$-accessible $(\infty,1)$-categories that admit filtered colimits, and functors between them that preserve filtered colimits and $\aleph_1$-compact objects.

\begin{proposition}\label{prop:CompAss-comonadic}
	The inclusion
	\[\mathrm{CompAss} \hookrightarrow \mathrm{Acc}^\mathrm{fil}_{\aleph_1}\]
	is comonadic, with associated comonad $\Ind((-)^{\aleph_1})$. 
	Moreover, this comonad is lax-idempotent.
\end{proposition}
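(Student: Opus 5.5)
The plan is to reduce to the dual of \cref{main-thm-inverse-2}, that is, the comonadic converse for lax-idempotent comonads, applied to the adjunction $\mathrm{incl} \dashv \Ind((-)^{\aleph_1})$ between $\mathrm{CompAss}$ and $\mathrm{Acc}^\mathrm{fil}_{\aleph_1}$. Concretely, I would carry out three steps in order: first construct the adjunction, then identify the comonad, and finally identify its locally full subcategory $(\mathrm{Acc}^\mathrm{fil}_{\aleph_1})^{Q}$ of coalgebra-like objects (in the sense of \cref{main-thm-2}) with $\mathrm{CompAss}$.

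\textbf{Step 1: the adjunction.} For $\cD \in \mathrm{Acc}^\mathrm{fil}_{\aleph_1}$, the category $\Ind(\cD^{\aleph_1})$ is compactly generated. It is therefore compactly assembled, with $\hat y$ given by $\Ind(\cD^{\aleph_1})\to \Ind(\Ind(\cD^{\aleph_1}))$ as the left adjoint of the colimit functor. The counit is $\Ind(\cD^{\aleph_1}) \to \cD$, the filtered-colimit extension of the inclusion $\cD^{\aleph_1}\subset\cD$; this is an equivalence since $\cD$ is $\aleph_1$-accessible. Hence $Q := \mathrm{incl}\circ\Ind((-)^{\aleph_1})$ is essentially the identity on underlying objects. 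For the adjunction I would instead place the right adjoint $\cD\mapsto \Ind(\cD^{\aleph_1})\simeq\cD$ on the inclusion. Given $\cC$ compactly assembled, I use the standard fact (as in \cite{efimovKtheoryLocalizingInvariants2025}) that $\cC$ is $\aleph_1$-presentable/accessible and that $\hat y$ lands in $\Ind(\cC^{\aleph_1})\subset\Ind(\cC)$. The unit is then $\hat y\colon \cC\to\Ind(\cC^{\aleph_1})$, and the counit is the colimit map $\Ind(\cD^{\aleph_1})\to\cD$. To verify the mapping-category equivalence, I would check that strongly continuous functors $\cC\to\cD$ correspond to filtered-colimit and $\aleph_1$-compact preserving maps $\Ind(\cC^{\aleph_1})\to\cD$. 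This is done by restricting to $\cC^{\aleph_1}$ and using that $\hat y$ of an $\aleph_1$-compact object is an $\aleph_1$-small filtered diagram.

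\textbf{Step 2: lax-idempotence.} By \cref{laxi-adj-conditions}\cref{laxi-adj4} (dualized), it suffices to exhibit, for each $\cD$, an adjunction between $\Ind(\text{counit})$ and the unit at $\Ind(\cD^{\aleph_1})$. This is the usual adjunction $\hat y \dashv \colim$ for compactly generated categories, and it also agrees with the lax-idempotence of $\Ind$ from \cref{exa:ind-monadic}.

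\textbf{Step 3: identify the subcategory.} An object $\cD$ lies in the subcategory from \cref{main-thm-2} precisely when the counit $\Ind(\cD^{\aleph_1})\to\cD$ admits a fully faithful left adjoint. Since $\cD$ is $\aleph_1$-accessible with filtered colimits, $\Ind(\cD^{\aleph_1})$ is a full subcategory of $\Ind(\cD)$ closed under the relevant colimits. A left adjoint to the colimit functor from $\Ind(\cD^{\aleph_1})$ therefore gives a left adjoint to the colimit functor from $\Ind(\cD)$, and conversely by the landing property above. So these objects are exactly the compactly assembled ones, and full faithfulness is automatic by \cref{lem:extra-adj-ff}. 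On morphisms, the commuting square with $\nu$ is precisely strong continuity, again after restricting along $\Ind(\cD^{\aleph_1})\subset\Ind(\cD)$. The dual of \cref{main-thm-inverse-2} then gives comonadicity.

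The main obstacle is the size and accessibility bookkeeping. One must show that for compactly assembled $\cC$ the left adjoint $\hat y$ lands in $\Ind(\cC^{\aleph_1})$, and that the morphism conditions in $\mathrm{Acc}^\mathrm{fil}_{\aleph_1}$, in particular preservation of $\aleph_1$-compacts, match those in $\mathrm{CompAss}$. For this I would take as input, following the introduction, that compactly assembled categories are $\aleph_1$-compactly generated, and that strongly continuous functors preserve $\aleph_1$-compact objects because they have colimit-preserving right adjoints.
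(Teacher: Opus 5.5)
Your architecture is the paper's: apply the dual of \cref{main-thm-inverse-2} to $\mathrm{incl} \dashv \Ind((-)^{\aleph_1})$, recognize objects by comparing left adjoints of $c' \colon \Ind(\cD^{\aleph_1}) \to \cD$ and $c \colon \Ind(\cD) \to \cD$, and recognize morphisms via strong continuity. However, Step 1 rests on a false claim, and this is a genuine gap.

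\textbf{The counit is not an equivalence.} For $\aleph_1$-accessible $\cD$, the counit $c'$ is \emph{not} an equivalence. Accessibility gives $\cD \simeq \Ind_{\aleph_1}(\cD^{\aleph_1})$, the free completion under $\aleph_1$-filtered colimits. By contrast, $\Ind$ freely adjoins all filtered colimits. If $c'$ were an equivalence, every $\aleph_1$-compact object of $\cD$ would be compact, which already fails for $\cD = \mathrm{Sp}$ (take $\bigoplus_{\mathbb{N}} \mathbb{S}$). Worse, if $Q$ were equivalent to the identity, every object would be a $Q$-coalgebra, and the proposition would assert $\mathrm{CompAss} = \mathrm{Acc}^{\mathrm{fil}}_{\aleph_1}$. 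Your Step 3 in fact treats $c'$ as non-invertible, since whether it has a left adjoint is exactly what it tests, so Steps 1 and 3 are inconsistent.

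\textbf{The hom-equivalence points the wrong way.} The same confusion affects your adjunction check. What must be shown is
$\twomap_{\mathrm{Acc}^{\mathrm{fil}}_{\aleph_1}}(\cC,\cD) \simeq \twomap_{\mathrm{CompAss}}(\cC,\Ind(\cD^{\aleph_1}))$ for compactly assembled $\cC$. The maps are $F \mapsto \Ind(F^{\aleph_1}) \circ \hat y'_\cC$ with inverse $G \mapsto c'_\cD \circ G$; this is the argument of \cite[Proposition 1.89]{efimovKtheoryLocalizingInvariants2025} that the paper invokes. It is not a comparison of strongly continuous functors $\cC \to \cD$ with maps out of $\Ind(\cC^{\aleph_1})$.

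\textbf{Transferring left adjoints in Step 3.} Closure of $\Ind(\cD^{\aleph_1}) \subset \Ind(\cD)$ under colimits does not by itself turn a left adjoint of $c'$ into one of $c$. You need that the inclusion $\iota$ has a right adjoint $R$, given by restricting ind-objects to $\cD^{\aleph_1}$. This lands in $\Ind(\cD^{\aleph_1})$ because $\cD \simeq \Ind_{\aleph_1}(\cD^{\aleph_1})$, and it satisfies $c \simeq c' R$. Then $\hat y' \dashv c'$ yields $\iota \hat y' \dashv c' R \simeq c$. This is exactly the paper's use of the dual of \cref{lem:lift-along-lali}, and it also gives your ``landing property'' $\hat y \simeq \iota \hat y'$ without appeal to the literature.

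\textbf{Preservation of $\aleph_1$-compact objects.} Objects of $\mathrm{CompAss}$ are merely accessible, so strongly continuous functors need not have right adjoints at all. Even in the presentable case, the right adjoint only preserves filtered colimits, not all colimits. Preservation of $\aleph_1$-compact objects should instead come, as in the paper, from Efimov's description of $\aleph_1$-compacts as colimits of sequences of compact morphisms, which strongly continuous functors preserve.

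\textbf{Step 2.} Step 2 is superfluous, since the dual of \cref{main-thm-inverse-2} already yields lax-idempotence. Also, the adjunction $\hat y' \dashv c'$ you have in mind is item (3) of \cref{laxi-adj-conditions} (for every compactly assembled $\cC$), not item (4).
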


\begin{proof}
	It is shown in \cite[Proposition 1.24]{efimovKtheoryLocalizingInvariants2025} that any compactly assembled $(\infty,1)$-category is $\aleph_1$-accessible.
	Moreover, any strongly continuous functor between compactly assembled $(\infty,1)$-categories preserves $\aleph_1$-compact objects by the description of the $\aleph_1$-compact objects given in loc.\ cit.\ and the definition of strong continuity \cite[Definition 1.10]{efimovKtheoryLocalizingInvariants2025}.
	It follows that the inclusion $\mathrm{CompAss} \to \widehat{\Cat}{}_{(\infty,1)}$ lands in $\mathrm{Acc}^\mathrm{fil}_{\aleph_1}$.

	Observe that if $\cC$ is $\aleph_1$-accessible and admits filtered colimits, then the colimit functor $c \colon \Ind(\cC) \to \cC$ factors as
	\[\Ind(\cC) \xrightarrow{R} \Ind(\cC^{\aleph_1}) \xrightarrow{c'} \cC\]
	where the first arrow is right adjoint to the inclusion $\iota \colon \Ind(\cC^{\aleph_1}) \hookrightarrow \Ind(\cC)$ and the second arrow is the colimit functor.
	In particular, applying the dual of \cref{lem:lift-along-lali}, it follows that $c' = c \circ \iota$ admits a left adjoint $\hat{y}'$ if and only if $c$ admits a left adjoint $\hat{y}$, and that in this case $\hat{y} = \iota \circ \hat{y}'$.

	From this, it follows by the same argument as \cite[Proposition 1.89]{efimovKtheoryLocalizingInvariants2025} that $\Ind((-)^{\aleph_1})$ is right adjoint to the inclusion $\mathrm{CompAss} \hookrightarrow \mathrm{Acc}^\mathrm{fil}_{\aleph_1}$. Moreover, combining this statement about $\hat{y}'$ with (the dual of) \cref{main-thm-inverse-2}, it follows that this inclusion is comonadic and the corresponding comonad on $\mathrm{Acc}^\mathrm{fil}_{\aleph_1}$ is lax-idempotent.
\end{proof}

If $\cC$ is a stable compactly assembled $(\infty,1)$-category, then since it admits finite colimits, it must admit all colimits by \cite[Corollary 4.2.3.11]{HTT} and is therefore presentable.
By \cite[Proposition D.7.3.1]{SAG}, these are precisely the dualizable objects in $\preslst$ for Lurie's tensor product \cite[\S 4.8.2]{HA}, hence they are typically called dualizable (stable) $(\infty,1)$-categories.
It was shown in \cite[Theorem B]{Ramzi2024DualizablePresentableinftycategories} that $(\preslst)^\mathrm{dbl}$ is comonadic over $\presl_{\mathrm{St},\aleph_1}$, where $(\preslst)^\mathrm{dbl}$ is the locally full sub-$(\infty,2)$-category of $\preslst$ spanned by dualizable objects and internal left adjoints, and $\presl_{\mathrm{St},\aleph_1}$ is the locally full sub-$(\infty,2)$-category of $\preslst$ spanned by $\aleph_1$-compactly generated stable $(\infty,1)$-categories and left adjoint functors that preserve $\aleph_1$-compact objects.
We obtain an alternative proof of this result by restricting the comonadic adjunction from \cref{prop:CompAss-comonadic}:

\begin{proposition}[cf.\ {\cite[Theorem B]{Ramzi2024DualizablePresentableinftycategories}}]
	The adjunction
	\[\begin{tikzcd}
	{\mathrm{incl} : (\preslst)^\mathrm{dbl}} && {\presl_{\mathrm{St},\aleph_1} : \Ind((-)^{\aleph_1})}
	\arrow[""{name=1, anchor=center, inner sep=0}, shift left=2, from=1-3, to=1-1]
	\arrow[""{name=0, anchor=center, inner sep=0}, hook', shift left=2, from=1-1, to=1-3]
	\arrow["\dashv"{anchor=center, rotate=-90}, draw=none, from=0, to=1]
	\end{tikzcd}\]
	is comonadic and lax-idempotent.
\end{proposition}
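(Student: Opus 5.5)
The plan is to obtain this by restricting the comonadic lax-idempotent adjunction $\mathrm{CompAss} \rightleftarrows \mathrm{Acc}^\mathrm{fil}_{\aleph_1}$ of \cref{prop:CompAss-comonadic}, rather than redoing the argument in the stable setting. I will show that $\Ind((-)^{\aleph_1})$ preserves the stable presentable subcategories, so the adjunction restricts, and then check that the restricted adjunction satisfies the criterion of (the comonadic dual of) \cref{main-thm-inverse-2}.

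\textbf{Step 1: identify the subcategories.} First, $\presl_{\mathrm{St},\aleph_1}$ is a locally full sub-$(\infty,2)$-category of $\mathrm{Acc}^\mathrm{fil}_{\aleph_1}$: an $\aleph_1$-compactly generated stable $(\infty,1)$-category is $\aleph_1$-accessible and admits filtered colimits. On morphisms, this sub-$(\infty,2)$-category keeps exactly those filtered-colimit-preserving, $\aleph_1$-compact-preserving functors that are moreover exact, and hence colimit preserving. Second, by the discussion preceding the statement (\cite[Proposition D.7.3.1]{SAG}), $(\preslst)^\mathrm{dbl}$ is the preimage of $\presl_{\mathrm{St},\aleph_1}$ under $\mathrm{CompAss} \hookrightarrow \mathrm{Acc}^\mathrm{fil}_{\aleph_1}$. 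Its objects are the stable compactly assembled categories. Its morphisms are the strongly continuous exact functors, and I expect these to coincide with the internal left adjoints; this matches the characterization used in Ramzi and Efimov, which I will cite.

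\textbf{Step 2: the comonad preserves stability.} For $\cC$ in $\presl_{\mathrm{St},\aleph_1}$, the category $\cC^{\aleph_1}$ is small, stable and idempotent complete. Hence $\Ind(\cC^{\aleph_1})$ is stable, compactly generated, and in particular $\aleph_1$-compactly generated. For an exact $F \colon \cC \to \cD$ preserving $\aleph_1$-compact objects, $\Ind(F^{\aleph_1})$ is exact and preserves compacts, so it lies in $\presl_{\mathrm{St},\aleph_1}$. The counit $c' \colon \Ind(\cC^{\aleph_1}) \to \cC$ is exact, colimit preserving and preserves $\aleph_1$-compact objects, because compacts go to $\aleph_1$-compacts. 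Since $\presl_{\mathrm{St},\aleph_1}$ is locally full in $\mathrm{Acc}^\mathrm{fil}_{\aleph_1}$, the right adjoint restricts and the adjunction $\mathrm{incl} \dashv \Ind((-)^{\aleph_1})$ restricts to $\presl_{\mathrm{St},\aleph_1}$. The resulting comonad is still lax-idempotent, since the relevant adjunctions between components are the same as in the ambient category. Here I use \cref{laxi-adj-conditions}: the 2-morphisms involved live in mapping categories that are full in the ambient ones.

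\textbf{Step 3: comonadicity.} I apply the dual of \cref{main-thm-2} and \cref{main-thm-inverse-2} to the restricted comonad $Q = \Ind((-)^{\aleph_1})$ on $\presl_{\mathrm{St},\aleph_1}$. Coalgebras are those $\cC$ for which $c'$ admits a fully faithful left adjoint $\hat y'$, together with the maps $F$ for which $\hat y' F \simeq Q(F)\hat y'$. One subtlety is that the left adjoint must exist inside $\presl_{\mathrm{St},\aleph_1}$, not merely in $\mathrm{Acc}^\mathrm{fil}_{\aleph_1}$. I will argue this through the $(\infty,2)$-functoriality of the locally full inclusion, together with the observation that any left adjoint between stable presentable categories is exact and colimit preserving. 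The remaining point is that $\hat y'$ preserves $\aleph_1$-compact objects; I will take this from Efimov's description of $\aleph_1$-compacts in a dualizable category, just as in \cref{prop:CompAss-comonadic}. With these in hand, the coalgebra description agrees with the intersection of the conditions from Step 1, and this intersection is $(\preslst)^\mathrm{dbl}$.

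The main obstacle is the morphism-level check: the strongly continuous functors between stable compactly assembled categories must be exactly the internal left adjoints. This is where I rely on \cite{efimovKtheoryLocalizingInvariants2025}.
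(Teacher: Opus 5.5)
Your proposal is correct and follows the paper's route. The paper's proof is the one-line observation that the comonadic lax-idempotent adjunction of \cref{prop:CompAss-comonadic} restricts because $\Ind((-)^{\aleph_1})$ preserves stable categories and colimit-preserving functors; your Steps 1--3 spell out why the restricted adjunction stays comonadic via the dual of \cref{main-thm-inverse-2}, and you do not need Efimov to see that $\hat y'$ preserves $\aleph_1$-compact objects, since this is automatic: its right adjoint $c'$ preserves filtered colimits (equivalently, $\hat y'$ is already a 1-morphism of $\mathrm{Acc}^\mathrm{fil}_{\aleph_1}$ by \cref{prop:CompAss-comonadic}).
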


\begin{proof}
	This follows from \cref{prop:CompAss-comonadic} since $\Ind{((-)^{\aleph_1})}$ preserves stable $(\infty,1)$-categories and colimit preserving functors between them.
\end{proof}

\begin{remark}
	More generally, let $\cV$ be a presentably symmetric monoidal $(\infty,1)$-category.
	Recall that the $(\infty,2)$-category $\presl_\cV$ of presentable $\cV$-enriched $(\infty,1)$-categories is equivalent to the $(\infty,2)$-category $\Mod_\cV(\presl)$ of $\cV$-modules in $\presl$.
	It is shown in \cite[Theorem 3.4]{Ramzi2024DualizablePresentableinftycategories} that there exists a cardinal $\kappa$ (depending on $\cV$) such that the inclusion $(\presl_\cV)^\mathrm{dbl} \hookrightarrow \presl_\cV$ factors through $\Mod_\cV(\presl_\kappa)$, where $\presl_\kappa$ denotes the $(\infty,2)$-category of $\kappa$-compactly generated $(\infty,1)$-categories.
	By the characterization of dualizable objects in $\presl_\cV$ given in item 4 of \cite[Theorem 1.49]{Ramzi2024DualizablePresentableinftycategories} and \cref{main-thm-inverse-2} it then follows that the inclusion $(\presl_\cV)^\mathrm{dbl} \hookrightarrow \Mod_\cV(\presl_\kappa)$ is comonadic.
	In particular, this gives an alternative proof of Ramzi's Theorem 3.7 \cite{Ramzi2024DualizablePresentableinftycategories}, though it still uses \cite[Theorem 3.4]{Ramzi2024DualizablePresentableinftycategories}, which asserts that the inclusion $(\presl_\cV)^\mathrm{dbl} \hookrightarrow \presl_\cV$ lands in $\Mod_\cV(\presl_\kappa)$.
\end{remark}

\section{The symmetric \texorpdfstring{$\infty$}{∞}-topos}\label{sec:symtop}

Let us write $\LTop$ for the $(\infty,2)$-category of $(\infty,1)$-toposes and left exact left adjoints between them.
In this section we construct a left adjoint to the forgetful functor $\LTop \to \presl$, which sends a presentable $(\infty,1)$-category to its \emph{symmetric $(\infty,1)$-topos}.
We then show that the resulting adjunction is oplax-idempotent and monadic.
This can be viewed as a higher-categorical analogue of \cite[\S 4.2]{2006SingularCoveringsToposes}.

\begin{definition}[The symmetric $(\infty,1)$-topos]\label{def:sym-topos}
	Let $\cC \in \Cat_{(\infty,1)}^{\kappa\text{-}\cocomp}$ for $\kappa$ a regular cardinal, and let $\Sigma^{\kappa}_{\cC}$ denote the set of morphisms in $\cP(\cC^{\lex})$:
	\[
		\Sigma^{\kappa}_{\cC} = \bigl\{\colim_{i \in I} y_{c_i} \to y_c \;\big|\; c = \colim_{i \in I} c_i \text{ in } \cC,\; |I| < \kappa \bigr\}.
	\]
	The \emph{symmetric $(\infty,1)$-topos} $\Sym^{\kappa}(\cC)$ is the localization of $\cP(\cC^{\lex})$ at the strongly saturated, base-change stable class generated by $\Sigma^{\kappa}_{\cC}$. By \cite[Propositions 6.2.1.1 \& 6.2.1.2]{HTT}, the localization
	\[
		L_{\Sigma_{\cC}^{\kappa}} : \cP(\cC^{\lex}) \llra \Sym^{\kappa}(\cC) : R_{\Sigma_{\cC}^{\kappa}}
	\]
	is left exact, and in particular $\Sym^{\kappa}(\cC)$ is an $(\infty,1)$-topos.
\end{definition}

\begin{remark}\label{rem:remsym}
	Note that by construction we can identify the mapping $(\infty,1)$-category $\twomap_{\LTop}(\Sym^{\kappa}(\cC),\cX)$ with the full sub-$(\infty,1)$-category of $\twomap_{\LTop}(\cP(\cC^{\lex}),\cX)$ on those functors which invert the morphisms in $\Sigma^{\kappa}_{\cC}$. In particular, the restriction map
	\[
		\twomap_{\LTop}(\Sym^{\kappa}(\cC),\cX) \to \Fun(\cC,\cX)
	\]
	along $\cC \to \cP(\cC^{\lex})$ factors through the full sub-$(\infty,1)$-category on $\kappa$-small colimit preserving functors.
\end{remark}

\begin{construction}\label{cons:etasmall}
	Let $\cC \in \Cat_{(\infty,1)}^{\kappa\text{-}\cocomp}$ for $\kappa \lneq \mathcal{U}$. We consider the composite functor
	\[
		\eta_{\cC} \colon \cC \to \cP(\cC) \to \cP(\cC^{\lex}) \xrightarrow{L_{\Sigma_{\cC}^{\kappa}}} \Sym^{\kappa}(\cC)
	\]
	which preserves the $\kappa$-small colimits in $\cC$ by construction.
\end{construction}

\begin{proposition}\label{prop:symtoposadj}
	The forgetful functor $U \colon \LTop \to \presl$ admits a left adjoint, sending a presentable $(\infty,1)$-category $\cC=\Ind_{\kappa}(\cC_\kappa)$ with $\cC_\kappa \in \Cat_{(\infty,1)}^{\kappa\text{-}\cocomp}$ to $\Sym^{\kappa}(\cC_\kappa)$.
\end{proposition}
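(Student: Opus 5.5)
The plan is to verify the universal property directly. We show that for every $(\infty,1)$-topos $\cX$, restriction along the functor $\eta$ from \cref{cons:etasmall}, composed with the canonical $\cC_\kappa \to \Ind_\kappa(\cC_\kappa) = \cC$, gives an equivalence
\[\twomap_{\LTop}(\Sym^\kappa(\cC_\kappa),\cX) \eqarrow \twomap_{\presl}(\cC,\cX).\]
The first step is to identify the right-hand side. By the universal property of $\Ind_\kappa$ (cf.\ \cref{exa:ind-monadic} and \cite[\S 5.3.5]{HTT}), colimit-preserving functors $\Ind_\kappa(\cC_\kappa) \to \cX$ correspond to $\kappa$-small colimit preserving functors $\cC_\kappa \to \cX$. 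So it suffices to show that restriction along $\eta_{\cC_\kappa}$ identifies $\twomap_{\LTop}(\Sym^\kappa(\cC_\kappa),\cX)$ with $\Fun^{\kappa\text{-}\mathrm{cocomp}}(\cC_\kappa,\cX)$.

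Next, we decompose $\twomap_{\LTop}(\cP(\cC_\kappa^\lex),\cX)$ in two steps. By \cite[Proposition 6.1.5.2]{HTT}, left exact left adjoints $\cP(\cD) \to \cX$ for $\cD$ left exact correspond to left exact functors $\cD \to \cX$. Applying this with $\cD = \cC_\kappa^\lex$ and then using the free finite limit completion of \cref{exa:lex-cats}, which gives $\Fun^\lex(\cC_\kappa^\lex,\cX) \simeq \Fun(\cC_\kappa,\cX)$ since $\cX$ has finite limits, we obtain an equivalence
\[\twomap_{\LTop}(\cP(\cC_\kappa^\lex),\cX)\simeq \Fun(\cC_\kappa,\cX)\]
given by restriction along $\cC_\kappa \to \cP(\cC_\kappa^\lex)$.

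By \cref{rem:remsym}, $\twomap_{\LTop}(\Sym^\kappa(\cC_\kappa),\cX)$ is the full subcategory of functors inverting $\Sigma^\kappa_{\cC_\kappa}$. Here we should check that the relevant class is the one described there: a left exact left adjoint inverts $\Sigma$ if and only if it inverts the strongly saturated, base-change stable class generated by $\Sigma$. This holds because left exact colimit-preserving functors preserve pullbacks, so the class of morphisms they invert is base-change stable and strongly saturated. Under the equivalence above, a left exact left adjoint $\cP(\cC_\kappa^\lex) \to \cX$ with restriction $F \colon \cC_\kappa \to \cX$ sends $\colim_i y_{c_i} \to y_c$ to $\colim_i F(c_i) \to F(c)$. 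Hence it inverts $\Sigma^\kappa_{\cC_\kappa}$ exactly when $F$ preserves $\kappa$-small colimits, which gives the claimed equivalence.

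The step I expect to be the main obstacle is a size and choice issue rather than the formal argument. I must make sure $\Sym^\kappa(\cC_\kappa)$ is an $(\infty,1)$-topos, which \cref{def:sym-topos} provides since $\Sigma$ is a small set. I must also check that the answer is independent of the choice of $\kappa$ and of the presentation $\cC = \Ind_\kappa(\cC_\kappa)$. The universal property established above already forces independence: for any admissible choice, $\Sym^\kappa(\cC_\kappa)$ corepresents the same functor $\twomap_{\presl}(\cC,U(-))$. The pointwise left adjoints then assemble into a left adjoint of $(\infty,2)$-categories by the usual pointwise criterion for adjoints, because the equivalences are natural in $\cX$ and compatible with 2-morphisms.
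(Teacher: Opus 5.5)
Your proposal is correct and follows essentially the same route as the paper's proof. Both reduce, via the universal property of $\Ind_\kappa$, to $\kappa$-small colimit preserving functors on $\cC_\kappa$; identify $\twomap_{\LTop}(\cP(\cC_\kappa^\lex),\cX)\simeq\Fun(\cC_\kappa,\cX)$ using \cite[Proposition 6.1.5.2]{HTT} and the finite limit completion; and conclude with \cref{rem:remsym}. Your explicit check that inverting $\Sigma^\kappa_{\cC_\kappa}$ amounts to $F$ preserving $\kappa$-small colimits is exactly the verification behind the paper's claim that the relevant square is a pullback.
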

\begin{proof}
	We show that $\twomap_{\presl}(\cC,U(-))$ is represented by $\Sym^{\kappa}(\cC_{\kappa})$. By \cite[Proposition 5.3.5.10]{HTT}, the map $\eta_{\cC_{\kappa}}$ from \cref{cons:etasmall} extends uniquely to a colimit preserving functor $\eta_{\cC} \colon \cC \to \Sym^{\kappa}(\cC_{\kappa})$. We claim that restriction along $\eta_{\cC}$ induces an equivalence of $(\infty,1)$-categories
	\[
		\eta_{\cC}^* \colon \twomap_{\LTop}(\Sym^{\kappa}(\cC_{\kappa}),\cX) \xrightarrow{\simeq} \twomap_{\presl}(\cC,\cX),
	\]
	for every $(\infty,1)$-topos $\cX$. By the universal property of $\Ind_{\kappa}$, this is equivalent to showing that restriction along $\eta_{\cC_\kappa}$ is fully faithful with essential image $\Fun^{\kappa}(\cC_{\kappa},\cX)$, i.e. the full sub-$(\infty,1)$-category on those functors which preserve $\kappa$-small colimits. To this end, consider the composite
	\[
		\twomap_{\LTop}(\Sym^{\kappa}(\cC_{\kappa}),\cX) \xrightarrow{L^{*}_{\Sigma_{\cC_\kappa}^{\kappa}}} \twomap_{\LTop}(\cP(\cC_\kappa^{\lex}),\cX)\xrightarrow{y^*_{\cC_{\kappa}^{\lex}}}\Fun(\cC_{\kappa}^{\lex},\cX),
	\]
	where the second map is restriction along the Yoneda embedding. By \cite[Proposition 6.1.5.2]{HTT}, $y^*_{\cC_{\kappa}^{\lex}}$ is fully faithful with essential image the finite-limit-preserving functors, yielding an equivalence $\twomap_{\LTop}(\cP(\cC_\kappa^{\lex}),\cX) \simeq \Fun(\cC_\kappa,\cX)$. By \cref{rem:remsym}, we obtain a commutative diagram
	\[
		\begin{tikzcd}
			\twomap_{\LTop}(\Sym^{\kappa}(\cC_{\kappa}),\cX) \arrow[d] \arrow[r] & \Fun^{\kappa}(\cC_{\kappa},\cX) \arrow[d] \\
			\twomap_{\LTop}(\cP(\cC_\kappa^{\lex}),\cX) \arrow[r,"\simeq"'] & \Fun(\cC_\kappa,\cX),
		\end{tikzcd}
	\]
	which is easily verified to be a pullback, so the bottom equivalence restricts to an equivalence on the top row, as required.
\end{proof}

\begin{notation}
	We denote by $\Sym$ the left adjoint to the forgetful functor $U \colon \LTop \to \presl$ from \cref{prop:symtoposadj}. Note that due to the previous proposition $\Sym$ is independent of the choice of presentation $\Ind_\kappa(\cC_\kappa) \simeq \cC$.
\end{notation}

\begin{proposition}\label{prop:topos-unit-char}
	Let $\cC \in \presl$. Then $\cC$ is an $(\infty,1)$-topos if and only if the unit $\eta_{\cC} \colon \cC \to \Sym(\cC)$ is fully faithful and admits a right adjoint in $\presl$.
\end{proposition}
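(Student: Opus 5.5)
We must show that $\cC \in \presl$ is an $(\infty,1)$-topos exactly when the unit $\eta_\cC \colon \cC \to \Sym(\cC)$ is fully faithful and admits a right adjoint in $\presl$. The plan is to prove the two directions separately. For the forward direction we build the right adjoint explicitly and reduce full faithfulness to a statement about the localization. For the converse we exhibit $\cC$ as a left exact localization of $\Sym(\cC)$.

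\emph{Forward direction.} Suppose $\cC$ is an $(\infty,1)$-topos.
\begin{enumerate}
\item Applying the adjunction of \cref{prop:symtoposadj} to $\id_\cC$ gives a left exact left adjoint $\rho \colon \Sym(\cC) \to \cC$ with $\rho\,\eta_\cC \simeq \id_\cC$.
\item Choose $\kappa$ so that $\cC = \Ind_\kappa(\cC_\kappa)$ and $\cC_\kappa$ is closed under finite limits, as is possible for a topos. Then $\rho$ is the localization of the functor $\cP(\cC_\kappa^\lex) \to \cC$ obtained as follows: left Kan extend along the Yoneda embedding the finite-limit-preserving functor $\cC_\kappa^\lex \to \cC_\kappa \subset \cC$ induced by the inclusion.
\item To show that $\eta_\cC$ is right adjoint to $\rho$ with unit an equivalence, it suffices by \cref{easyadjun} (dual form) to produce a 2-morphism $\id_{\Sym(\cC)} \Rightarrow \eta_\cC\rho$ that becomes invertible after whiskering with $\rho$ and with $\eta_\cC$.
\item Since $\rho\eta_\cC \simeq \id$, \cref{lem:lali} (applied in $\presl^{\mathrm{co}}$) shows that full faithfulness of $\eta_\cC$ is equivalent to $\eta_\cC$ being a right adjoint of $\rho$. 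The problem thus reduces to showing that $\eta_\cC$ preserves $\kappa$-small limits, or at least that $\eta_\cC\rho$ is idempotent on generators $L(y_c)$ with $c \in \cC_\kappa^\lex$.
\item Concretely, the map needed is $L(y_c) \to \eta_\cC(\rho L(y_c)) = \eta_\cC(\lim c_j)$. This comes from the canonical comparison map from a finite limit in $\cC^\lex$ to the image of the corresponding limit taken in $\cC$.
\end{enumerate}

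\emph{Main obstacle.} The key step is proving that this comparison becomes an equivalence after inverting $\Sigma_\cC^\kappa$, i.e.\ that $\eta_\cC$ preserves finite limits. I would first reduce to pullbacks and the terminal object. I would then use descent in $\cC$, namely universality of colimits and effectivity, to show that the class of objects on which the comparison is invertible contains the representables and is closed under colimits. This uses the fact that the saturated class generated by $\Sigma$ is stable under base change, which is exactly the content of the topos axioms. With finite-limit preservation in hand, $\eta_\cC$ is a left exact left adjoint inverse to $\rho$ on the image, and full faithfulness follows.

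\emph{Converse direction.} Suppose $\eta_\cC$ is fully faithful with a right adjoint $r$ in $\presl$, so $\eta_\cC \dashv r$ and $r\eta_\cC \simeq \id$.
\begin{enumerate}
\item By \cref{lem:extra-adj-ff}, applied after first identifying $\rho \dashv \eta_\cC$ via the oplax-idempotent structure coming from \cref{main-thm-2}, the left adjoint $\rho$ is a reflective localization. Alternatively, one can observe directly that $\rho\eta_\cC \simeq \id$ together with full faithfulness of $\eta_\cC$ gives $\rho \dashv \eta_\cC$.
\item $\cC$ is then a reflective localization of the topos $\Sym(\cC)$ along $\rho$.
\item The reflector $\rho$ is left exact, because the right adjoint $r$ of $\eta_\cC$ exists and $\rho$ is itself a right adjoint (to $\eta_\cC$); hence $\rho$ preserves all limits.
\item By \cite[Proposition 6.1.0.6 / 6.3.5.? (left exact localizations of topoi)]{HTT}, a left exact accessible localization of an $(\infty,1)$-topos is a topos, so $\cC$ is an $(\infty,1)$-topos.
\end{enumerate}

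The converse direction requires some care in obtaining the extra adjunction $\rho \dashv \eta_\cC$, but it is formal compared with the descent argument in the forward direction.
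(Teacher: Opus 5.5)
\textbf{There is a genuine gap.} Your forward direction aims at the wrong adjunction. The statement asks for a right adjoint of $\eta_\cC$ in $\presl$, and that right adjoint is your retraction $\rho$ itself. The correct claim is $\eta_\cC \dashv \rho$ with invertible unit $\id_\cC \simeq \rho\eta_\cC$, as befits an oplax-idempotent monad (cf.\ \cref{main-thm-2}). You instead try to build $\id_{\Sym(\cC)} \Rightarrow \eta_\cC\rho$, i.e.\ to prove $\rho \dashv \eta_\cC$. That would force $\eta_\cC$ to be left exact, which fails already for $\cC = \Spc$:
\begin{itemize}
\item There $\Sym(\Spc) \simeq \Fun(\Spc^{\mathrm{fin}},\Spc)$, $\eta_\Spc(X) = X \times (-)$ and $\rho = \ev_*$.
\item $\eta_\Spc(*)$ is the generic object $K \mapsto K$, not the terminal (constant) functor.
\item Yoneda gives $\Map(\eta_\Spc X, F) \simeq \Map(X, F(*))$, which exhibits $\eta_\Spc \dashv \rho$ and not the reverse.
\end{itemize}
Consequently:
\begin{itemize}
\item Your ``main obstacle'' cannot be overcome by any descent argument.
\item The comparison map in your step 5, from a formal finite limit in $\cC_\kappa^\lex$ to the image of the actual limit, does not exist naturally; the canonical map goes the other way.
\item Step 4's claim that full faithfulness of $\eta_\cC$ is equivalent to $\rho \dashv \eta_\cC$ is wrong. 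A fully faithful functor with a retraction need not be right adjoint to it.
\item Your argument never shows that the right adjoint of $\eta_\cC$ preserves colimits, which is what ``in $\presl$'' demands.
\end{itemize}

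The missing idea is that the adjunction already exists before localizing. Since $\cC_\kappa$ has finite limits, $j \colon \cC_\kappa \to \cC_\kappa^\lex$ has a right adjoint $\rho'$, namely evaluation of formal limits (\cref{exa:lex-cats}). Hence $j_! \dashv \rho'_!$ on presheaves. Your $\rho$ satisfies $\rho L \simeq (\cP(\cC_\kappa)\to\cC)\circ\rho'_!$; the topos hypothesis enters here, making $\cP(\cC_\kappa)\to\cC$ left exact. Meanwhile $\eta_\cC$ precomposed with $\cP(\cC_\kappa) \to \cC$ agrees with $L \circ j_!$. The counit $j_!\rho'_! \Rightarrow \id$ is built from the canonical maps $j(\lim_i c_i) \to \lim_i j(c_i)$, from actual limits to formal ones. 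It descends to $\eta_\cC\rho \Rightarrow \id$, and \cref{easyadjun} then gives $\eta_\cC \dashv \rho$, with $\rho$ colimit-preserving.

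Your converse also needs repair:
\begin{itemize}
\item $\rho$ is not available there, since you built it from the topos hypothesis.
\item Appealing to the oplax-idempotent structure via \cref{main-thm-2} is circular, because \cref{thm:symtopos} is deduced from this proposition.
\item Your ``alternative'' argument, that $\rho\eta_\cC \simeq \id$ plus full faithfulness gives $\rho \dashv \eta_\cC$, is false.
\end{itemize}
Work with $r$ directly. It is a left adjoint because it lies in $\presl$, and left exact because it is right adjoint to $\eta_\cC$. Applying \cref{lem:extra-adj-ff} to $\eta_\cC \dashv r \dashv r^R$ shows that $r^R$ is fully faithful. So $\cC$ is a left exact localization of the topos $\Sym(\cC)$, hence a topos.
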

\begin{proof}
	For the ``if'' direction, suppose that $\eta_{\cC}$ is fully faithful with right adjoint $\omega_{\cC} \colon \Sym(\cC) \to \cC$ in $\presl$. Then $\omega_{\cC}$ is in particular a left exact left adjoint. We therefore see by \cref{lem:extra-adj-ff} that $\cC$ is a left exact reflective localization of an $(\infty,1)$-topos, hence it is an $(\infty,1)$-topos. 

	Conversely, suppose that $\cC$ is an $(\infty,1)$-topos. As shown in the proof of \cite[Proposition 6.1.5.3]{HTT}, there exists a regular cardinal $\kappa$ such that the full sub-$(\infty,1)$-category $i \colon \cC_\kappa \subset \cC$ spanned by the $\kappa$-compact objects admits finite limits, $\cC = \Ind_\kappa(\cC_\kappa)$ and such that $\cC$ is a left exact localization of $\cP(\cC_\kappa)$.
	Let $j \colon \cC_\kappa \to \cC_{\kappa}^{\lex}$ denote the canonical map, and observe that by \cref{exa:lex-cats}, it admits a right adjoint $j \dashv \rho$ such that $\rho \circ j \simeq \id$.
	The composite
     \[
		\cP(\cC_\kappa^{\lex}) \xrightarrow{\rho_!} \cP(\cC_\kappa) \xrightarrow{} \cC
	 \]
	 is a left exact left adjoint which inverts $\Sigma_{\cC_{\kappa}}^{\kappa}$ and thus descends to a left exact left adjoint $\omega_\cC \colon \Sym^{\kappa}(\cC_\kappa) \to \cC$. We claim that the following diagram commutes:
	 \[\begin{tikzcd}
	{\cP(\cC_\kappa^{\lex})} & {\cP(\cC_\kappa)} & {\cP(\cC_\kappa^{\lex})} \\
	{\Sym^{\kappa}(\cC_\kappa)} & \cC & {\Sym^{\kappa}(\cC_{\kappa})}{.}
	\arrow["{\rho_!}", from=1-1, to=1-2]
	\arrow[from=1-1, to=2-1]
	\arrow["{j_!}", from=1-2, to=1-3]
	\arrow[from=1-2, to=2-2]
	\arrow[from=1-3, to=2-3]
	\arrow["{\omega_\cC}"', from=2-1, to=2-2]
	\arrow["{\eta_{\cC}}"', from=2-2, to=2-3]
\end{tikzcd}\]
The left-hand square commutes by construction. For the right-hand square, note that every functor involved preserves colimits, so it suffices to check commutativity after restricting to $\cC_\kappa$, where it is immediate.

Commutativity of the diagram immediately gives $\omega_\cC \circ \eta_\cC \simeq \id$. Moreover, the counit of $j_! \dashv \rho_!$ produces a natural transformation $\epsilon \colon \eta_\cC \circ \omega_\cC \to \id$. The result now follows from \cref{easyadjun} after unwinding the definitions.
\end{proof}

\begin{lemma}\label{lem:BCtopos}
	Let $f \colon \cC \to \cD$ be a morphism of $(\infty,1)$-toposes in $\presl$. Then $f$ is left exact if and only if the commutative diagram
	\[
		\begin{tikzcd}
			\cC \arrow[r,"f"] \arrow[d,"\eta_{\cC}"] & \cD \arrow[d,"\eta_{\cD}"] \\
			\Sym(\cC) \arrow[r,"\Sym(f)"] & \Sym(\cD)
		\end{tikzcd}
	\]
	is vertically right adjointable in $\presl$ (cf. \cref{prop:topos-unit-char}).
\end{lemma}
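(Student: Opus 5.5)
The plan is to make the Beck--Chevalley transformation explicit and reduce everything to a statement about functors out of $\cC_\kappa^{\lex}$. By \cref{prop:topos-unit-char}, $\eta_\cC$ and $\eta_\cD$ admit right adjoints $\omega_\cC$ and $\omega_\cD$ in $\presl$, both left exact left adjoints. Vertical right adjointability means that the mate $\omega_\cD\circ\Sym(f) \Rightarrow f\circ\omega_\cC$ is invertible. By \cref{BCeasy}, applied to $\presl^{\op}$ or $\presl^{\mathrm{co}}$ as appropriate (the units are fully faithful, so the adjunctions are coreflective in the relevant sense), it suffices to produce \emph{some} equivalence $\omega_\cD\circ\Sym(f)\simeq f\circ\omega_\cC$.

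I choose $\kappa$ large enough that $\cC=\Ind_\kappa(\cC_\kappa)$ and $\cD=\Ind_\kappa(\cD_\kappa)$ are as in the proof of \cref{prop:topos-unit-char}, with $\cC_\kappa$ and $\cD_\kappa$ closed under finite limits, and such that $f$ preserves $\kappa$-compact objects. Both composites $\omega_\cD\Sym(f)$ and $f\omega_\cC$ are colimit preserving functors $\Sym^\kappa(\cC_\kappa)\to\cD$. Precomposing with the localization $\cP(\cC_\kappa^{\lex})\to\Sym^\kappa(\cC_\kappa)$, it suffices to compare them after restriction along $\cC_\kappa^{\lex}\to\cP(\cC_\kappa^{\lex})\to\Sym^\kappa(\cC_\kappa)$, since the universal property of the localization makes this restriction fully faithful. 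By the explicit description $\omega=(\text{colim})\circ\rho_!$, the restriction of $f\omega_\cC$ is $f\circ i_\cC\circ\rho_\cC$, where $\rho_\cC\colon\cC_\kappa^{\lex}\to\cC_\kappa$ is the right adjoint from \cref{exa:lex-cats} that evaluates finite limit formulas. Similarly, $\Sym(f)$ restricts to $f^{\lex}\colon\cC_\kappa^{\lex}\to\cD_\kappa^{\lex}$, so the restriction of $\omega_\cD\Sym(f)$ is $i_\cD\circ\rho_\cD\circ f^{\lex}$.

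The main obstacle is the identification of these two restrictions. By \cref{exa:lex-cats}, $f|_{\cC_\kappa}$ preserves finite limits if and only if $\rho_\cD f^{\lex}\simeq f\rho_\cC$. This gives the forward direction. For the converse, given an equivalence of the two composites in $\presl$, I restrict along $\cC_\kappa^{\lex}$ to obtain $i_\cD\rho_\cD f^{\lex}\simeq f i_\cC\rho_\cC$. Then the second part of \cref{exa:lex-cats}, together with the fact that $i_\cD$ is fully faithful, shows that $f|_{\cC_\kappa}$ is left exact. It remains to pass between left exactness of $f$ and of its restriction $f|_{\cC_\kappa}$. Since $\cC=\Ind_\kappa(\cC_\kappa)$, finite limits in a topos commute with $\kappa$-filtered colimits, and $f$ preserves colimits, $f$ is left exact if and only if $f|_{\cC_\kappa}$ is. I would need to check the delicate point that the chosen equivalence is compatible with these identifications, in particular that $\Sym(f)$ indeed restricts to $f^{\lex}$. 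This should follow from naturality of $\eta$ and the universal property of $(-)^{\lex}$.
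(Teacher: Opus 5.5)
Your proposal is correct. For ``$f$ left exact $\Rightarrow$ adjointable'' you follow the paper. You reduce via (the dual of) \cref{BCeasy} to exhibiting some equivalence $f\omega_\cC\simeq\omega_\cD\Sym(f)$. You check it after restricting to $\cC_\kappa^{\lex}$; the paper precomposes with $\cP(\cC_\kappa^{\lex})\to\Sym^\kappa(\cC_\kappa)$ and compares via $\cP(\cC_\kappa^{\lex})$, which amounts to the same thing. You then apply \cref{exa:lex-cats} to $f_\kappa=f|_{\cC_\kappa}$. Two harmless slips: the mate goes $f\omega_\cC\Rightarrow\omega_\cD\Sym(f)$, and in ``$\rho_\cD f^{\lex}\simeq f\rho_\cC$'' you mean $f_\kappa$. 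Your ``delicate point'' is fine. Both $\Sym(f)$ and $\cC_\kappa^{\lex}\to\Sym^\kappa(\cC_\kappa)$ are left exact, so by the universal property of $(-)^{\lex}$ the restriction is determined by its value on $\cC_\kappa$, which is $\eta_\cD f$ by naturality.

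For the converse you take a genuinely different route from the paper.
\begin{itemize}
\item \emph{Your route.} You run the same chain backwards. Full faithfulness of $i_\cD$ and the ``if'' half of \cref{exa:lex-cats} give left exactness of $f_\kappa$. You then extend to $f$ using $\cC\simeq\Ind_\kappa(\cC_\kappa)$ and the left exactness of $\kappa$-filtered colimits in both toposes.
\item \emph{The paper's route.} It argues directly on an arbitrary finite diagram $(c_i)$ in $\cC$. The comparison map for $\omega_\cC\eta_\cC\simeq\id$ is an equivalence. So is the comparison map for $f\omega_\cC$, since $f\omega_\cC\simeq\omega_\cD\Sym(f)$ is left exact. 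Two-out-of-six then forces the comparison map for $f$ to be an equivalence.
\end{itemize}

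The paper's argument is shorter and needs no choice of $\kappa$ and no compact objects. It uses only $\omega_\cC\eta_\cC\simeq\id$ and the left exactness of $\omega_\cD$ and $\Sym(f)$.

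Yours turns the whole lemma into one chain of biconditionals. The cost is an extra step you only assert: a colimit-preserving functor out of $\Ind_\kappa(\cC_\kappa)$ is left exact once its restriction to $\cC_\kappa$ is. This needs that finite diagrams in $\Ind_\kappa(\cC_\kappa)$ are $\kappa$-filtered colimits of finite diagrams in $\cC_\kappa$, e.g.\ via $\Fun(K,\Ind_\kappa(\cC_\kappa))\simeq\Ind_\kappa(\Fun(K,\cC_\kappa))$ for finite $K$, together with the comparison maps being compatible with these colimits. This is standard, but it should be spelled out.
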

\begin{proof}
	Suppose that the square is adjointable, so that $f \omega_\cC \simeq \omega_\cD M(f)$. To see that $f$ is left exact, let $I$ be a finite category and consider the diagram
	\[\begin{tikzcd}[ampersand replacement=\&, column sep = {5em, between origins}]
		{f\omega_\cC\eta_\cC(\lim_I c_i)} \&\& {f(\lim_I \omega_\cC \eta_\cC (c_i))} \& \\
		\& {f \omega_\cC(\lim_I \eta_\cC (c_i))} \&\& {\lim_I f \omega_\cC \eta_\cC (c_i).}
		\arrow["\sim", from=1-1, to=1-3]
		\arrow[from=1-1, to=2-2]
		\arrow[from=1-3, to=2-4]
		\arrow[from=2-2, to=1-3]
		\arrow["\sim", from=2-2, to=2-4]
	\end{tikzcd}\]
	The top horizontal arrow is an equivalence since $\omega_\cC \eta_\cC \simeq \id_\cC$. The bottom horizontal map is an equivalence since $f \omega_\cC \simeq \omega_\cD M(f)$ and both $\omega_\cD$ and $M(f)$ preserve finite limits.
	We conclude by two-out-of-six that $f$ preserves finite limits.

	 We now prove the converse. Choose a regular cardinal $\kappa$ such that $\cC$, $\cD$, and $f$ are $\kappa$-accessible and $f$ preserves $\kappa$-compact objects. By \cite[Proposition 5.4.7.4]{HTT}, we may assume without loss of generality (by choosing $\kappa$ large enough) that the full subcategories on $\kappa$-compact objects $\cC_\kappa$ and $\cD_\kappa$ are closed under finite limits. It then follows as in the proof of Proposition 6.1.5.3 of op.\ cit.\ that the reflective localizations $\cP(\cC_\kappa) \to \cC$ and $\cP(\cD_\kappa) \to \cD$ are left exact. Borrowing the notation from \cref{prop:topos-unit-char}, we see that we have a commutative diagram
	\[\begin{tikzcd}
		{\cP(\cC_\kappa^{\lex})} & {\cP(\cC_\kappa)} & \cC \\
		{\cP(\cD_\kappa^{\lex})} & {\cP(\cD_\kappa)} & \cD{.}
		\arrow["{\rho_!}", from=1-1, to=1-2]
		\arrow["{(f^{\lex}_\kappa)_!}", from=1-1, to=2-1]
		\arrow[from=1-2, to=1-3]
		\arrow["{(f_{\kappa})_!}", from=1-2, to=2-2]
		\arrow["f", from=1-3, to=2-3]
		\arrow["{\rho_{!}}"', from=2-1, to=2-2]
		\arrow[from=2-2, to=2-3]
	\end{tikzcd}\]
where the left square commutes by \cref{exa:lex-cats} since $f_\kappa$ preserves finite limits.
Invoking the dual of \cref{BCeasy}, it suffices to produce an equivalence $f \circ \omega_{\cC} \simeq \omega_{\cD}\circ \Sym^{\kappa}(f_{\kappa})$. This may be checked after precomposing along the localization functors $L_{\Sigma_{\cC_{\kappa}}^{\kappa}} \colon \cP(\cC_\kappa^{\lex}) \to \Sym^{\kappa}(\cC_\kappa)$ and $L_{\Sigma_{\cD_{\kappa}}^{\kappa}} \colon \cP(\cD_\kappa^{\lex}) \to \Sym^{\kappa}(\cD_\kappa)$, and one readily verifies that the outer commutative square in the diagram above provides the desired identification.
\end{proof}

\begin{theorem}\label{thm:symtopos}
	There exists an oplax-idempotent monadic adjunction
	\[
		\Sym \colon \presl \llra \LTop \colon U,
	\]
	where $U$ denotes the forgetful functor. 
\end{theorem}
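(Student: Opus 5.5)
The plan is to apply the oplax-idempotent version of \cref{main-thm-inverse-2}, that is, its dual for oplax-idempotent monads, to the adjunction $\Sym \dashv U$ from \cref{prop:symtoposadj}. Write $T = U\Sym$. That dual says the adjunction is monadic and oplax-idempotent as soon as two things hold. First, $U$ lands in $(\presl)^T$: its objects are those $\cC$ for which $\eta_\cC$ admits a coreflective right adjoint $\omega_\cC$, meaning $\omega_\cC \eta_\cC \simeq \id$. Its 1-morphisms are those $f$ with $f\omega_\cC \simeq \omega_\cD T(f)$; by the dual of \cref{BCeasy} this is the same as the Beck--Chevalley transformation being invertible. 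Second, the induced functor $\LTop \to (\presl)^T$ is an equivalence of $(\infty,2)$-categories.

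I will check objects and 1-morphisms separately, and then treat 2-morphisms.

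\textbf{Objects.} By \cref{prop:topos-unit-char}, a presentable $\cC$ is an $(\infty,1)$-topos if and only if $\eta_\cC$ is fully faithful and has a right adjoint in $\presl$. The dual of \cref{lem:lali} identifies this with $\eta_\cC$ admitting a coreflective right adjoint. So the objects of $(\presl)^T$ are exactly the $(\infty,1)$-toposes.

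\textbf{1-morphisms.} Let $f$ be a morphism between toposes in $\presl$. By \cref{lem:BCtopos}, $f$ is left exact if and only if the unit square is vertically right adjointable. This is precisely the $(\presl)^T$ condition. Hence the 1-morphisms of $(\presl)^T$ between toposes are exactly the left exact left adjoints.

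\textbf{2-morphisms.} $\LTop \subset \presl$ is a locally full sub-$(\infty,2)$-category, since 2-morphisms in $\LTop$ are arbitrary natural transformations. $(\presl)^T$ is locally full by construction. So $U$ is the inclusion of a locally full sub-$(\infty,2)$-category with exactly the objects and 1-morphisms of $(\presl)^T$. It therefore factors through $(\presl)^T$ as an equivalence.

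\textbf{Expected obstacle.} The main difficulty is bookkeeping rather than new mathematics. The unit of the adjunction from \cref{prop:symtoposadj} must agree with the $\eta_\cC$ used in \cref{prop:topos-unit-char,lem:BCtopos}. Both are built as the unique colimit-preserving extension of the composite in \cref{cons:etasmall}, so they should coincide. I also need the monad to be $T = U\Sym$ with $\eta$ exactly this unit, and the choice of $\kappa$ in \cref{lem:BCtopos} must not affect the identification.

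Given this, the proof of \cref{main-thm-inverse-2}, run in $\sX^{\mathrm{co}}$, also shows oplax-idempotence. Concretely, $\mu_\cC = U(\varepsilon_{\Sym\cC})$ is identified with $\omega_{T\cC}$, which is right adjoint to $\eta_{T\cC}$. This gives the statement.
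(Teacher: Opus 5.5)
Your proposal is correct and matches the paper's proof, which likewise deduces the theorem from the oplax variant of \cref{main-thm-inverse-2}, using \cref{prop:topos-unit-char} for objects and \cref{lem:BCtopos} for 1-morphisms. Your remarks on local fullness of $\LTop \subset \presl$ and on identifying the unit of $\Sym \dashv U$ with $\eta_\cC$ make explicit what the paper leaves implicit.
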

\begin{proof}
	The result now follows from the oplax variant of \cref{main-thm-inverse-2} together with \cref{prop:topos-unit-char} and \cref{lem:BCtopos}.
\end{proof}

\bibliographystyle{amsalpha}
\bibliography{references}

\end{document}